\documentclass[12pt,reqno,a4paper]{amsart}
\usepackage{amsmath,amssymb,amsthm}
\usepackage[OT1]{fontenc}

\newcommand{\review}[1]{#1}
\newenvironment{reviewchange}{\begingroup}{\endgroup}
\usepackage{xcolor}
\newif\ifroundreview
\roundreviewtrue

\usepackage[textwidth=18.2cm,textheight=22.2cm,centering]{geometry}
\usepackage{hyperref}
\hypersetup{colorlinks=true,linkcolor=blue,citecolor=blue,urlcolor=blue}
\newcommand{\mathds}[1]{\mathbf{#1}}

\makeatletter
\renewcommand\section{\@startsection{section}{1}{\z@}%
  {2.5ex plus 1ex minus .2ex}%
  {1.5ex plus .2ex}%
  {\centering\normalfont\large\bfseries}}
\renewcommand\subsection{\@startsection{subsection}{2}{\z@}%
  {2.0ex plus .8ex minus .2ex}%
  {1.0ex plus .2ex}%
  {\normalfont\normalsize\bfseries}}
\makeatother

\theoremstyle{plain}
\newtheorem{theorem}{Theorem}
\newtheorem{lemma}{Lemma}
\newtheorem{corollary}{Corollary}
\newtheorem{proposition}{Proposition}

\theoremstyle{definition}

\theoremstyle{remark}
\newtheorem{remark}{Remark}
\theoremstyle{definition}
\newtheorem*{thmA}{\textbf{Theorem A}}
\theoremstyle{definition}

\DeclareMathOperator{\PV}{P.V.}

\DeclareMathOperator{\Ind}{Ind}
\DeclareMathOperator{\Area}{Area}

\newcommand{\C}{\mathbb{C}}
\newcommand{\R}{\mathbb{R}}
\newcommand{\HH}{\mathbb{H}}
\newcommand{\Ch}{\widehat{\mathbb{C}}}
\newcommand{\eps}{\varepsilon}
\newcommand{\dd}{\mathrm{d}}
\newcommand{\dA}{\,dA}
\numberwithin{equation}{section}
\title[Quasiconformal deformations of chordal Loewner chains]%
{Quasiconformal deformations of chordal Loewner chains}

\author{Yong Han}
\address{School of Mathematical Sciences, Shenzhen University,\newline
Shenzhen 518060, Guangdong, China}
\email{hanyong@szu.edu.cn}

\author{Renxin Jiang}
\address{School of Mathematical Sciences, Chongqing University,\newline
Chongqing 400044, China}
\email{RenxinJiang321@outlook.com}
\date{October 1, 2026}

\begin{document}

\begin{abstract}
We establish first-order variational formulas for the driving function
and the half-plane capacity of a chordal Loewner chain under
quasiconformal deformations.  Under the standard normalization fixing
$0,1$ and $\infty$, the formulas hold for every bounded Beltrami
differential.  If the curve has positive area, the formulas contain
additional integrals over the curve; these terms vanish when the curve
has zero area or when the differential is extended by zero on the curve.  
The proof uses the normalized Ahlfors--Bers variation together with
a Stoilow factorization adapted to the slit domain.

We then consider the asymptotic normalization for differentials
whose symmetric extensions to $\mathbb C$ have the form
\[
\nu(z)=a\Bigl(\frac{\bar z}{z}\Bigr)^{n-1}+\nu_0(z),
\qquad
a\in(-1,1),\quad n\in\mathbb N,\quad n\ge1,\quad
\nu_0\in L^r(\mathbb C),\quad 0<r<2.
\]
The asymptotically normalized map differs from the standardly
normalized map by a positive dilation.  Using this relation, the
variational formulas in the second normalization follow directly from
the standardly normalized case.
\end{abstract}

\maketitle

\section{Introduction}

In a recent work \cite{SW24}, Sung and Wang derived explicit
variational formulas for the chordal Loewner driving function and
capacity under quasiconformal deformations.  This is essential for computing the first variation of the Loewner
energy.

In this paper we extend Theorem~1.1 of \cite{SW24} to arbitrary
bounded Beltrami differentials under the standard normalization
fixing $0,1,\infty$, without any compact support or disjointness
assumption. Using singular integral theory, we derive explicit
variational formulas in this setting. We also consider non-decaying
model differentials of the form
$\nu(z)=a(\bar z/z)^{n-1}$, where $a\in(-1,1)$ and $n\ge1$ is an
integer. For $a\ne0$, these differentials belong to no
$L^r(\HH)$ with $0<r<\infty$; they are constant when $n=1$ and
have angular oscillation when $n>1$. Under the asymptotic
normalization specified in Theorem~\ref{thm-main-hh}, we establish
the existence of the first variations of the driving function
and capacity and obtain the corresponding variational formulas.


To be precise, let
$\gamma:[0,T_+)\to\overline{\HH}$ be a simple curve,
parameterized by half-plane capacity, where $0<T_+\le\infty$,
with $\gamma(0)=0$, $\gamma((0,T_+))\subset\HH$, and
$\gamma(t)\to\infty$ as $t\uparrow T_+$.
All assertions at time $t$ below concern $0\le t<T_+$;
no assumption that the total half-plane capacity is infinite is needed.
Let $g_t\colon\HH\setminus\gamma[0,t]\to\HH$ be its Loewner map satisfying
the hydrodynamic normalization
$g_t(z)=z+2t/z+O(1/z^2)$ as $z\to\infty$, with driving function
$\lambda_t=g_t(\gamma(t))$.  Let $\nu\in L^\infty(\HH)$ be a Beltrami
differential.  In the compact-support setting of {\bf Theorem~A} below, for
each $\eps\in\R$ with
$\|\eps\nu\|_\infty<1$, let $\psi^{\eps\nu}$ denote the unique
quasiconformal self-map of $\HH$ with Beltrami coefficient $\eps\nu$,
satisfying $\psi^{\eps\nu}(0)=0$ and the hydrodynamic normalization
\begin{align}\label{eqn-cond0-s}
\psi^{\eps\nu}(z)-z=O(1)\qquad\text{as }z\to\infty .
\end{align}

\begin{thmA}[{see \cite[Theorem~1.1]{SW24}}]\label{thm-wang-sung}
If the support of $\nu$ is compact and disjoint from $\gamma$, then the
following variational formulas hold at $\eps=0$:
\begin{align}
\left.\frac{\partial\lambda_t^{\eps\nu}}{\partial\eps}
\right|_{\eps=0}
&=-\frac{2}{\pi}\operatorname{Re}\iint_{\HH}
\nu(z)\left(
\frac{g_t'(z)^2}{g_t(z)-\lambda_t}-\frac1z
\right)\dA (z),
\label{eqn-Wang-one}\\
\left.\frac{\partial a_t^{\eps\nu}}{\partial\eps}
\right|_{\eps=0}
&=\frac1\pi\operatorname{Re}\iint_{\HH}
\nu(z)\bigl(g_t'(z)^2-1\bigr)\dA (z).
\label{eqn-Wang-two}
\end{align}
Here $\lambda_t^{\eps\nu}$ and $2a_t^{\eps\nu}$ denote the driving
function and the half-plane capacity of the image curve
$\psi^{\eps\nu}(\gamma)$, respectively.
\end{thmA}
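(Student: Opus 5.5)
Since the support $K$ of $\nu$ is compact and disjoint from $\gamma$, fix $t$ and work with the conformal structure near $\gamma[0,t]$. We will express the deformed Loewner map by transporting $\psi^{\eps\nu}$ through $g_t$. Put $\psi_\eps:=\psi^{\eps\nu}$ and $\gamma^\eps:=\psi_\eps(\gamma)$, and let $g^\eps_t$ be the hydrodynamically normalized map of $\HH\setminus\gamma^\eps[0,\tau]$, where $2\tau=2a^{\eps\nu}_t$ is the capacity of $\psi_\eps(\gamma[0,t])$.

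Consider the map $\Phi_\eps:=g^\eps_t\circ\psi_\eps\circ g_t^{-1}:\HH\to\HH$, which extends by reflection across $\R$. It is quasiconformal with Beltrami coefficient
\[
\mu_\eps=\eps\,(\nu\circ g_t^{-1})\,\frac{\overline{(g_t^{-1})'}}{(g_t^{-1})'},
\]
supported on the compact set $g_t(K)$. This set is disjoint from a neighbourhood of $\lambda_t$ and of the real segment $g_t(\gamma[0,t])$. Moreover $\Phi_\eps(z)=z+O(1)$ at $\infty$, and $\Phi_\eps$ maps $\lambda_t$ to $\lambda^{\eps\nu}_t$.

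First-order expansion of $\Phi_\eps$ and $\psi_\eps$. We use the Ahlfors--Bers formula for the hydrodynamically normalized self-map of $\HH$. After symmetric extension $\tilde\mu(\bar z)=\overline{\mu(z)}$, the solution with $\Phi(z)-z=O(1)$ at $\infty$ has derivative
\[
\dot\Phi(w)=-\frac1\pi\iint_\C \frac{\tilde\mu(z)}{z-w}\dA(z)+c,
\]
where $c$ is a real constant determined by the remaining normalization. For $\psi_\eps$ this normalization is $\psi(0)=0$; for $\Phi_\eps$ it is the expansion $\Phi_\eps(z)=z+(2(\tau-t)+\ldots)/z$ coming from hydrodynamic normalization of both Loewner maps. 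Folding the integral onto $\HH$ gives
\[
\dot\Phi(w)=-\frac1\pi\iint_\HH\Bigl(\frac{\mu(z)}{z-w}+\frac{\overline{\mu(z)}}{\bar z-w}\Bigr)\dA(z)+c .
\]
This is real for real $w$, so it equals $-\frac2\pi\operatorname{Re}$ of the first term.

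The expansion is uniform for $w$ outside a neighbourhood of the support. This holds because $\mu_\eps$ is exactly linear in $\eps$ and the higher Neumann-series terms are $O(\eps^2)$ in $L^p$, hence $O(\eps^2)$ in $C^0$ away from the support. In particular we can differentiate at $w=\lambda_t$ and read off the $1/w$ coefficient at infinity.

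Capacity. Expanding $1/(z-w)=-\sum z^k/w^{k+1}$ at $w=\infty$, the $1/w$ coefficient of $\dot\Phi$ is $\frac1\pi\iint\tilde\mu\dA=\frac2\pi\operatorname{Re}\iint_\HH\mu\dA$. The hydrodynamic expansion identifies this coefficient with $2(\dot a_t-\dot a_0)$, where $\dot a_0$ is the capacity change of the trivial curve, i.e. the corresponding coefficient for $\psi_\eps$ itself. More directly, the $1/w$ coefficient of $g^\eps_t\circ\psi_\eps$ is the $1/w$ coefficient of $\Phi_\eps\circ g_t$. Comparing the expansions of the two sides:
\begin{itemize}
\item on one side, the coefficient is $2a^{\eps\nu}_t$ plus the $1/z$-coefficient of $\psi_\eps$;
\item on the other side, it is $2t$ plus the coefficient of $\Phi_\eps$.
\end{itemize}
Changing variables $z=g_t(\zeta)$ turns $\iint\mu\dA$ into $\iint_\HH\nu\, g_t'(\zeta)^2\dA(\zeta)$. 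Subtracting the analogous term $\iint\nu\dA$ for $\psi_\eps$ then yields \eqref{eqn-Wang-two}.

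Driving function. We have $\lambda^{\eps\nu}_t=\Phi_\eps(\lambda_t)-\Phi_\eps(g_t(0^\pm))+\ldots$. More cleanly, $\lambda^{\eps\nu}_t=\Phi_\eps(\lambda_t)$, where the hydrodynamic normalization of $g^\eps_t$ fixes the additive constant of $\Phi_\eps$ only up to the constant of $\psi_\eps$. Thus $\partial_\eps\lambda^{\eps\nu}_t=\dot\Phi(\lambda_t)$, with the additive constant matched so that the $O(1)$ terms at $\infty$ of $\Phi\circ g_t$ and $g^\eps_t\circ\psi$ agree. That constant is $-\dot\psi(\infty)$-type and equals $\dot\psi(0)$-adjusted, giving the $-1/z$ term.

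Changing variables, $\frac{\mu(z)}{z-\lambda_t}$ becomes $-\nu(\zeta)\frac{g_t'(\zeta)^2}{\lambda_t-g_t(\zeta)}$. The normalization $\psi_\eps(0)=0$ subtracts $\nu/\zeta$, which yields \eqref{eqn-Wang-one}.

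The main obstacle is this bookkeeping of additive constants. It requires showing that the constant terms at $\infty$ of $\psi_\eps$ and $\Phi_\eps$ are related exactly as claimed, and that the variations commute with evaluation at the boundary point $\lambda_t$. The latter is legitimate because the support is compact and away from $\gamma$, so all integrands are bounded there.
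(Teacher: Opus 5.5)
Your argument is correct in substance, and it takes a genuinely different route from the paper. The paper never proves Theorem~A directly. It recovers it as the case $a=0$, $\nu_0=\nu$ of Theorem~\ref{thm-main-hh}, where the trace integrals vanish by disjointness and the principal values are absolute. That derivation runs through the three-point normalization:
\begin{itemize}
\item Theorem~\ref{thm-main-normal} is obtained from the Stoilow factorization $\Psi_t^\eps=H_t^\eps\circ\psi_{\mathrm{std}}^{\eps\nu}\circ G_t$.
\item The derivatives $\dot B_{-1},\dot B_0,\dot B_1$ are given by contour integrals of $W_t=V_t\circ g_t-g_t'V$.
\item One then uses $a^{\eps\nu}_{t,\mathrm{std}}=B_1/(2B_{-1})$ and $\lambda^{\eps\nu}_{t,\mathrm{std}}=(\Psi^\eps_t(\lambda_t)-B_0)/B_{-1}$.
\item Finally, the dilation identity $\phi^{\eps\nu}=\rho_\eps\psi^{\eps\nu}_{\mathrm{std}}$, with $\dot\rho_0=-\frac1\pi\iint_\C\widehat\nu(z)/(z(z-1))\dA(z)$, cancels the rational corrections $\lambda_t/(z(z-1))$ and $4t/(z(z-1))$.
\end{itemize}

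Your $\Phi_\eps=g^\eps_t\circ\psi_\eps\circ g_t^{-1}$ is a real affine image of the paper's $\Psi^\eps_t$; you simply fix its normalization at $\infty$ from the outset. With compact support, both $\psi_\eps$ and $\Phi_\eps$ are principal solutions $z+C(h)+\mathrm{const}$ with $h$ supported in a fixed compact set. So $\dot\lambda$ comes from evaluating a Cauchy transform at $\lambda_t$, and $\dot a$ comes from $1/z$ coefficients, with no Stoilow or contour step and no transfer.

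This is shorter. It would even reproduce the paper's $K_t^\circ$ terms if the support met $\gamma$, because the constant of $\psi_\eps$ integrates $\nu/z$ over all of $\HH$ while the pullback sees only $D_t$. What the paper's detour buys is generality. Your route needs the $z+O(1)$ normalization and a Laurent expansion of the Cauchy transform at $\infty$, i.e.\ decay of $\nu$. The three-point kernel is $O(|\zeta|^{-3})$ and handles every bounded $\nu$.

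When you write it up, delete the claim that $\Phi_\eps$ is normalized by its expansion $z+(2(\tau-t)+\ldots)/z$. The $1/z$ coefficient of $\Phi_\eps$ is an output, $d_\eps=\frac1\pi\iint_\C h_\eps\dA$, and cannot fix an additive constant.

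The step you call the main obstacle is a one-line check. Compare $z^0$ coefficients at $\infty$ in $\Phi_\eps\circ g_t=g^\eps_t\circ\psi_\eps$; neither Loewner map has a constant term. Hence the constant term of $\Phi_\eps$ equals that of $\psi_\eps=z+C(k_\eps)-C(k_\eps)(0)$, namely $-C(k_\eps)(0)$. Its derivative is $\frac1\pi\iint_\C\widehat\nu(\zeta)/\zeta\dA(\zeta)=\frac2\pi\operatorname{Re}\iint_\HH\nu(\zeta)/\zeta\dA(\zeta)$, which is exactly the $-1/z$ term.

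Similarly, replace the ``capacity change of the trivial curve'' wording with the comparison of $1/z$ coefficients, $2a^{\eps\nu}_t+c_\eps=2t+d_\eps$, where $c_\eps$ is the $1/z$ coefficient of $\psi_\eps$. You already state this comparison, so nothing further is needed.
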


As noted in \cite{SW24}, it is a natural and important question whether
the restrictive assumptions on the Beltrami differential---namely,
compact support and
$\operatorname{Supp}(\nu)\cap\gamma=\emptyset$---can be relaxed.  

Notice that for a general Beltrami coefficient,  the $O(1)$ normalization
\eqref{eqn-cond0-s} need not exist: already for a real
constant $\nu=a\neq0$, for $\eps\neq0$ with $|\eps a|<1$,
every quasiconformal self-map of $\HH$ with
coefficient $\eps a$ fixing $0$ and $\infty$ is a positive dilation of
$z\mapsto z+\eps a\bar z$, and the asymptotic identity required by
\eqref{eqn-cond0-s} cannot hold.  For general bounded coefficients we
therefore use two different normalizations:  one is the standard normalization \eqref{eqn-normal-new-han}
below, and the other is the oscillatory asymptotic normalization whose existence is proved in
Theorem~\ref{thm-yibande-n-new}.
To be precise, we show that the compact-support and disjointness
restrictions 
can be entirely removed if one of the following conditions holds:
\begin{itemize}
\item[\textbf{(a)}]
The hydrodynamic normalization \eqref{eqn-cond0-s} is replaced by the following 
standard normalization (Theorem~\ref{thm-main-normal}):
\begin{align}\label{eqn-normal-new-han}
\psi_{\mathrm{std}}^{\eps\nu}(0)=0,\qquad
\psi_{\mathrm{std}}^{\eps\nu}(1)=1,\qquad
\psi_{\mathrm{std}}^{\eps\nu}(\infty)=\infty .
\end{align}
\item[\textbf{(b)}]
The Beltrami differential $\nu$ has the oscillatory asymptotic form
specified in Theorem~\ref{thm-main-hh}.
\end{itemize}

We first prove the result under the standard normalization
\eqref{eqn-normal-new-han}.  We then establish the existence of the
asymptotically normalized solution in Case~\textbf{(b)}.
Finally, the two normalized solutions are related by a dilation, which
allows us to deduce the oscillatory formulas from the standardly
normalized result.

\subsection{Motivation: an explicit example}\label{sec-motiv-exam}
Before developing the general framework, we first test whether the
Sung--Wang variational formulas retain their form in an explicitly
computable non-decaying example. Consider the straight-line segment (see \cite[Example~4.12]{La05}):
\begin{align}\label{eqn-QC-deform-2}
\eta(t)=2\sqrt t\,
\Bigl(\frac{\alpha}{1-\alpha}\Bigr)^{\alpha-1/2}
e^{i\pi(1-\alpha)},\qquad t\ge 0,
\end{align}
where $\alpha\in(0,1)$.  For $t>0$, the normalized conformal mapping
$g_t\colon\HH\setminus\eta[0,t]\to\HH$ is given by
\begin{align*}
g_t^{-1}(z)=
\sqrt{\frac{4t}{\alpha(1-\alpha)}}\;
\phi\!\left(z\sqrt{\frac{\alpha(1-\alpha)}{4t}}\right),
\end{align*}
where
\begin{align*}
\phi(z)=\bigl(z+(1-\alpha)\bigr)^\alpha
        (z-\alpha)^{1-\alpha}.
\end{align*}
Both powers are defined using the principal logarithm, with
$\arg(z+1-\alpha),\arg(z-\alpha)\in(0,\pi)$ for $z\in\HH$.
Boundary values are taken from the upper half-plane.
The map $\phi$ maps $\HH$ onto $\HH$ minus a segment from $0$ to
$w_\alpha=\alpha^\alpha(1-\alpha)^{1-\alpha}
e^{i(1-\alpha)\pi}$.
At $t=0$ we set $g_0=\mathrm{id}$. The initial driving value and
capacity, before and after each deformation below, are zero; hence their
$\varepsilon$-variations at $t=0$ are also zero.

Consider the Beltrami differential
$\nu(z)=a\bar z^{\,n-1}/z^{n-1}$ with $a\in(-1,1)$ and an integer
$n\ge1$.  For sufficiently small $|\eps|$, the asymptotically
normalized quasiconformal mapping on $\HH$ induced by $\eps\nu$ is
explicitly\footnote{Here and below the $n$-th root is understood through
\begin{align*}
\bigl(z^n+\eps a\bar z^{\,n}\bigr)^{1/n}
=
z\Bigl(1+\eps a\Bigl(\frac{\bar z}{z}\Bigr)^{n}\Bigr)^{1/n},
\qquad z\neq0,
\end{align*}
\review{where the second factor uses the principal $n$-th root on the
right half-plane, which contains the closed disk
$\overline{D}(1,|\eps a|)$ because $|\eps a|<1$.
This includes $a=0$, and the map extends continuously by $0\mapsto0$.}}
\begin{align*}
\phi^{\eps\nu}(z)=\bigl(z^n+\eps a\bar z^{\,n}\bigr)^{1/n}.
\end{align*}

Under this mapping, the image curve
$\eta^{\eps\nu}(t)=\phi^{\eps\nu}(\eta(t))$ remains a straight line
segment with a modified length scale and inclination angle:
\begin{align*}
\eta^{\varepsilon\nu}(t)= 2\sqrt{t} \left( \frac{\alpha}{1-\alpha} \right)^{\alpha - 1/2} \left[ 1 + a^2\varepsilon^2 + 2a\varepsilon\cos(2n\pi\alpha) \right]^{\frac{1}{2n}} e^{i\pi(1-\tilde{\theta})},
\end{align*}
Here the inclination parameter has the following expression with a fixed logarithm
\[
\tilde{\theta}
=\alpha-\frac{1}{n\pi}\operatorname{Im}
\log\!\left(1+\varepsilon a e^{2in\pi\alpha}\right),
\]
where the logarithm is principal on the right half-plane. Indeed,
for $t>0$, $\arg\eta(t)=\pi(1-\alpha)$ and
$(\overline{\eta(t)}/\eta(t))^n=e^{2in\pi\alpha}$, so this formula
follows directly from the chosen branch of the model map. It gives
$\tilde{\theta}=\alpha$ at $\varepsilon=0$ and remains valid at all
angles, including the poles of the tangent in the equivalent
arctangent representation.

Using the explicit geometry of $\eta^{\varepsilon\nu}(t)$, its driving function $\lambda^{\varepsilon\nu}(t)$ and half-plane capacity $2a_t^{\varepsilon \nu}$ can be computed directly. Taking their derivatives with respect to $\varepsilon$ at $\varepsilon=0$ yields the geometric variations:
\begin{align}
\left. \frac{d\lambda^{\varepsilon\nu}(t) }{d\varepsilon} \right|_{\varepsilon=0} &= \frac{2a\sqrt{t}}{n\sqrt{\alpha(1-\alpha)}} \left[ (2\alpha-1)\cos(2n\pi\alpha) - \frac{\sin(2n\pi\alpha)}{\pi} \left( 2 - (2\alpha-1)\ln\frac{\alpha}{1-\alpha} \right) \right], \label{eqn-QC-deform-3} \\
\left.\frac{d a_t^{\varepsilon \nu}}{d \varepsilon}\right|_{\varepsilon=0} &= \frac{t a}{n}\left[2 \cos (2 n \pi \alpha)+\frac{\sin (2 n \pi \alpha)}{\pi}\left(\frac{2 \alpha-1}{\alpha(1-\alpha)}+2 \ln \frac{\alpha}{1-\alpha}\right)\right]. \label{eqn-QC-deform-cap-var}
\end{align}
Let $\mathcal L(\nu)$ and $\mathcal S(\nu)$ denote the integrals on the
right-hand sides of \eqref{eqn-Wang-one} and \eqref{eqn-Wang-two},
respectively, interpreted as circular principal values.  The following
two lemmas verify that these integral representations agree with the
explicit derivatives computed above; their proofs are deferred to the
Appendix \ref{sec-appendix-contours}.

\begin{lemma}\label{lemma-QC-deform-1}
For $\alpha\in(1/2,1)$, let
$\nu(z)=a\bar z^{\,n-1}/z^{n-1}$ with $a\in(-1,1)$ and an integer
$n\ge1$.  Then
\begin{align}\label{eqn-QC-deform-4}
\begin{split}
\mathcal L(\nu)
:=
-\frac2\pi\PV\operatorname{Re}
\iint_{\HH}
\nu(z)\left(
\frac{g_t'(z)^2}{g_t(z)-\lambda_t}-\frac1z
\right)\dA (z)
=\text{the right-hand side of \eqref{eqn-QC-deform-3}}
\end{split}
\end{align}
\end{lemma}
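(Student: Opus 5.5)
We compute $\mathcal L(\nu)$ directly by pulling the integral back to the $w$-plane through $z=g_t^{-1}(w)$. By scaling, $g_t^{-1}(w)=c\,\phi(w/c)$ with $c=\sqrt{4t/(\alpha(1-\alpha))}$. The normalization of $\phi$ makes the tip $w_\alpha$ correspond to $w=0$, so $\lambda_t=0$ (the tip is the preimage of the critical point $0$ of $\phi$). Scaling $z\mapsto cz$ leaves $\nu$ invariant and multiplies the kernel $g_t'^2/(g_t-\lambda_t)-1/z$ by $1/c$, while $dA$ scales by $c^2$. This gives $\mathcal L(\nu)=c\,\mathcal L_1$, where $\mathcal L_1$ is the same integral with $g=\phi^{-1}$. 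The prefactor $c=2\sqrt t/\sqrt{\alpha(1-\alpha)}$ matches the prefactor in \eqref{eqn-QC-deform-3}.

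In $\mathcal L_1$, substitute $z=\phi(w)$, so $dA(z)=|\phi'(w)|^2dA(w)$ and $g'(z)^2=1/\phi'(w)^2$. The integral becomes
\[
\iint_{\HH}\nu(\phi(w))\Bigl(\frac{\overline{\phi'(w)}}{w\,\phi'(w)}-\frac{|\phi'(w)|^2}{\phi(w)}\Bigr)dA(w).
\]
Since $\nu(\phi)=a(\overline{\phi}/\phi)^{n-1}$, every term is a product of holomorphic and antiholomorphic powers of $\phi,\phi'$. We have the explicit log-derivative $\phi'/\phi=\alpha/(w+1-\alpha)+(1-\alpha)/(w-\alpha)=w/((w+1-\alpha)(w-\alpha))$. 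This shows $\phi'(w)=w\,\phi(w)/((w+1-\alpha)(w-\alpha))$, so the first term has $w$ cancelling. I would write each integrand as $\partial_{\bar w}$ of something, using $\bar\phi^{\,n-1}\overline{\phi'}=\partial_{\bar w}(\bar\phi^{\,n}/n)$. Then Green's formula $\iint\partial_{\bar w}F\,dA=\frac{1}{2i}\oint F\,dw$ reduces the integral to boundary integrals along $\R$ (indented at $-(1-\alpha)$, $0$, $\alpha$) and along a large semicircle of radius $R$. The circular principal value is the limit $R\to\infty$, with the circle taken in the $z$-plane; since $\phi(w)\sim w$ this differs from the $w$-semicircle by a vanishing amount, which must be checked.

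On the real axis, $\phi$ has explicit arguments on each interval. On $(\alpha,\infty)$ it is real positive. On $(-(1-\alpha),\alpha)$ we get $\arg\phi=(1-\alpha)\pi$, which is the slit. On $(-\infty,-(1-\alpha))$ the argument is $\pi$. Hence $(\bar\phi/\phi)^{n}$ is piecewise constant, equal to $1$ or $e^{2in\pi\alpha}$, and the boundary integrals reduce to integrals of rational functions of $w$ times real powers. The $1/n$ factor arises from the antiderivative $\bar\phi^{\,n}/n$. The $\cos(2n\pi\alpha)$ terms come from the piecewise-constant phase. The $\sin(2n\pi\alpha)/\pi$ terms times $\ln(\alpha/(1-\alpha))$ and the constant $2$ come from logarithmic integrals $\int dw/(w-\alpha)$ and $\int dw/(w+1-\alpha)$ over the segments, after the real part is taken. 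The condition $\alpha>1/2$ enters through the sign and branch bookkeeping: the slit lies in the left quadrant, and it fixes which log terms combine into $\ln(\alpha/(1-\alpha))$.

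The main obstacle is the large semicircle together with the principal-value subtlety. The integrand decays only like $1/|w|^2$ times an oscillating factor, so the integral is not absolutely convergent. Each of the two kernel terms separately has non-integrable $1/|w|$ pieces, and they cancel only in the combination $g'^2/g-1/z$. I would expand $\phi(w)=w+(1-2\alpha)\cdot\frac{1}{2}\cdot 0+\dots$; more precisely, $\phi(w)=w+b_0+b_1/w+\dots$ with $b_0=\alpha(1-\alpha)-\alpha(1-\alpha)=0$ at first order, which must be computed carefully. The aim is to show that the arc contributions converge to explicit constants, typically a $\cos$ term from the angular average of $(\bar w/w)^{n-1}$ against the leading correction. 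If that is messy, the fallback is to note that both sides vanish when $a=0$ and are linear in $a$, and to verify the arc limit using the known case $n=1$, where $\nu$ is constant. Small indentations at $w=0,\alpha,-(1-\alpha)$ contribute nothing because the singularities there are integrable.
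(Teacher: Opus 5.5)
There is a genuine gap. Your overall strategy matches the paper's: rescale by $b_t=\sqrt{4t/(\alpha(1-\alpha))}$, pull back through $z=b_t\phi(w)$, write the integrand as $\partial_{\bar w}$ of $W\,\overline{\phi}^{\,n}/n$ with $W$ holomorphic, and apply Green's theorem on a truncated, indented half-plane, using that $(\overline{\phi}/\phi)^n$ equals $1$ off the slit interval and $e^{2in\pi\alpha}$ on it. The plan breaks at the identification of the driving point. The logarithmic derivative is
\[
\frac{\phi'(w)}{\phi(w)}=\frac{\alpha}{w+1-\alpha}+\frac{1-\alpha}{w-\alpha}=\frac{w-(2\alpha-1)}{(w+1-\alpha)(w-\alpha)},
\]
not $w/((w+1-\alpha)(w-\alpha))$. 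So the critical point of $\phi$ is $c=2\alpha-1$, and the tip is $w_\alpha=\phi(c)$; $\phi(0)$ is only an interior point of the slit. Hence $\lambda_t=b_t(2\alpha-1)\neq0$ for $\alpha\in(1/2,1)$. With $\lambda_t=0$ you would be computing a different integral. Its kernel $g_t'(z)^2/g_t(z)-1/z$ is $O(|z|^{-3})$ at infinity, which contradicts the $|w|^{-2}$ oscillatory tail you later call the main obstacle, and the result cannot reproduce \eqref{eqn-QC-deform-3}.

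With the correct $\lambda_t$, the pulled-back kernel contains $1/(\phi'(w)(w-c))$. Since $\phi'(c)=0$, the boundary primitive has a double pole at the central point:
\[
W(w)\overline{\phi(w)}^{\,n}=e^{2in\pi\alpha}\Bigl[\frac{\alpha(\alpha-1)}{(w-c)^2}+\frac{2\alpha-1}{w-c}\Bigr]+O(1),\qquad w\to c .
\]
No $\overline{w-c}$ term appears, again because $\phi'(c)=0$. So your claim that the small indentations contribute nothing, because the area integrand is locally integrable, fails at $c$. Local integrability of $\partial_{\bar w}F$ says nothing about $\int F\,dw$ over a shrinking semicircle when $F$ itself is singular. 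What actually happens is this:
\begin{itemize}
\item The truncated real-line integral over the slit interval diverges like $-2\alpha(1-\alpha)e^{2in\pi\alpha}/\eps$.
\item The clockwise semicircle around $c$ cancels this divergence and leaves the finite term $i\pi(1-2\alpha)e^{2in\pi\alpha}$. After taking imaginary parts, this is exactly the source of the $(2\alpha-1)\cos(2n\pi\alpha)$ term in \eqref{eqn-QC-deform-3}.
\item The $\sin(2n\pi\alpha)$ terms come from the finite part $2-(2\alpha-1)\ln\frac{\alpha}{1-\alpha}$ of the slit-interval integral. The logarithm arises from $\PV\int dx/(x-c)$, not from separate logarithms at $\alpha$ and $\alpha-1$.
\end{itemize}

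Conversely, the large semicircle contributes nothing in the limit. There $W\overline{\phi}^{\,n}=(2\alpha-1)\bar w^{\,n}/w^{n+1}+O(|w|^{-2})$, and $\int_0^\pi e^{-2in\theta}\,d\theta=0$ for every integer $n\ge1$. So no $\cos$ term comes from the arc. Your fallback of calibrating the arc limit with the case $n=1$ would not determine it for other $n$ in any event. Only the indentations at the branch points $\alpha-1$ and $\alpha$ actually vanish, being $O(\eps^{\alpha})$ and $O(\eps^{1-\alpha})$.
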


\begin{lemma}\label{lemma-QC-deform-2}
Under the same conditions as Lemma~\ref{lemma-QC-deform-1},
\begin{align}
\mathcal S(\nu)
:=
\frac1\pi\PV\operatorname{Re}
\iint_{\HH}
\nu(z)\bigl(g_t'(z)^2-1\bigr)\dA (z)
=\text{the right-hand side of \eqref{eqn-QC-deform-cap-var}}
\end{align}
\end{lemma}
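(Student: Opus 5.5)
The plan is to evaluate $\mathcal S(\nu)$ in closed form by moving to the $w=g_t(z)$ plane and reducing the area integral to one-dimensional integrals along $\R$ with Green's formula. This works because the boundary values of $\phi$ have piecewise constant argument.

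\emph{Step 1: scaling and change of variables.} Put $F=g_t^{-1}$. By the scaling $F(w)=c\,\phi(w/c)$ with $c=\sqrt{4t/(\alpha(1-\alpha))}$, together with the $0$-homogeneity of $\nu$, we get
\[
\mathcal S(\nu)=\frac{t}{\alpha(1-\alpha)}\cdot(\text{the same quantity for }F=\phi).
\]
This explains the linear factor $t$, so it suffices to work with $\phi$.

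Fix $R$ large and let $D_R=\{|z|<R\}\cap\HH$. Since the slit has measure zero, $\iint_{D_R}\nu\,g_t'^2\dA$ equals an integral over $\Omega_R=g_t(D_R\setminus\eta)$. There we use $g_t'(z)^2\dA(z)=\overline{F'(w)}/F'(w)\dA(w)$. For $n\ge1$,
\[
\nu(F)\,\frac{\overline{F'}}{F'}=a\Bigl(\frac{\overline F}{F}\Bigr)^{n-1}\frac{\overline{F'}}{F'}=a\,\frac{\overline{H'}}{H'},\qquad H=\frac{F^n}{n}.
\]
Because $H'$ is holomorphic and has no zeros in $\HH$, we have $\overline{H'}/H'=\partial_{\bar w}\bigl(\overline H/H'\bigr)$.

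\emph{Step 2: Green's formula.} Green's formula $\iint_\Omega\partial_{\bar w}f\dA=\frac1{2i}\oint_{\partial\Omega}f\,dw$ converts the main term into boundary integrals. The boundary of $\Omega_R$ consists of the real axis, indented by a small half-circle at $w=0$ (the preimage of the tip, where $\phi'(0)=0$), and the curve $g_t(\{|z|=R\})$.

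On $\R$ we compute
\[
\frac{\overline H}{H'}=\frac1n\Bigl(\frac{\bar\phi}{\phi}\Bigr)^{n}\frac{\phi}{\phi'},\qquad \frac{\phi}{\phi'}=\frac{(w+1-\alpha)(w-\alpha)}{w},
\]
and the second factor is a real rational function. The phase $(\bar\phi/\phi)^n$ is constant on each boundary interval: it equals $1$ on $(-\infty,-(1-\alpha))\cup(\alpha,\infty)$, and equals $e^{2in\pi\alpha}$ on $(-(1-\alpha),\alpha)$, where $\arg\phi=(1-\alpha)\pi$.

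The boundary integral therefore becomes explicit integrals of $x-(2\alpha-1)-\alpha(1-\alpha)/x$, with phase jumps at the prevertices. Their contributions are as follows.
\begin{itemize}
\item The $1/x$ term on the middle interval, paired with the indentation at $0$, produces $\ln\frac{\alpha}{1-\alpha}$ and a half-residue term.
\item The polynomial terms produce $(2\alpha-1)$-type coefficients.
\item Taking real parts of $e^{2in\pi\alpha}-1$ and $i(e^{2in\pi\alpha}-1)$ yields the $\cos(2n\pi\alpha)$ and $\sin(2n\pi\alpha)/\pi$ structure of \eqref{eqn-QC-deform-cap-var}.
\end{itemize}

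\emph{Step 3: the subtracted term and the outer boundary.} I would treat $-\iint_{D_R}\nu\dA$ directly in the $z$-plane. It is $a\iint_{D_R}(\bar z/z)^{n-1}\dA=a\iint_{D_R}\partial_{\bar z}\bigl(\bar z^n/(nz^{n-1})\bigr)\dA$, so it is also a boundary integral over $[-R,R]$ and the arc $|z|=R$. The arc contributions of the two terms are then compared using $F(w)=w-2t/w+O(w^{-2})$: the curve $g_t(\{|z|=R\})$ lies within $O(1/R)$ of the circle $|w|=R$. After the divergent $x^2/2$ and $(2\alpha-1)x$ pieces cancel against the $z$-plane segment $[-R,R]$, the remaining arc terms are $o(1)$ as $R\to\infty$. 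This step is where the circular principal value is actually used.

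The main obstacle is exactly this bookkeeping at infinity. Two things must be checked: that the $O(R^2)$ and $O(R)$ growths cancel exactly between the $w$-plane boundary and the $z$-plane boundary, and that the $O(1)$ mismatch from the deformed truncation curve vanishes in the limit. The indentation at $w=0$ is the second delicate point. Although $\overline H/H'\sim \bar w^{\,n}/w^{n-1}$ up to factors, its small-circle integral must be computed and not discarded, since it contributes part of the $\sin$ coefficient.

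Finally, I would sanity-check the result at $\alpha=1/2$ with $n=1$, where $\eta$ is vertical. There, real dilation plus shear should give $da_t/d\eps=2ta\cos(\pi)/1$, matching $2ta\cos(2n\pi\alpha)/n$.
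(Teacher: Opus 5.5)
Your overall strategy is the paper's: pull back by $g_t$, write the integrand as a $\partial_{\bar w}$-derivative, use that the boundary phase $(\bar\phi/\phi)^n$ is $1$ off $(\alpha-1,\alpha)$ and $e^{2in\pi\alpha}$ on it, and collect a principal value plus a half-residue at the preimage of the tip. However, the computation as you set it up goes wrong at its central step. The preimage of the tip is not $w=0$. From $\phi'/\phi=\alpha/(w+1-\alpha)+(1-\alpha)/(w-\alpha)$ one gets $\phi'(2\alpha-1)=0$, while $\phi'(0)\neq0$ for $\alpha\neq\frac12$; this is consistent with $\lambda_t=b_t(2\alpha-1)\neq0$. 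The correct kernel is
\[
\frac{\phi}{\phi'}(x)=\frac{(x+1-\alpha)(x-\alpha)}{x-(2\alpha-1)}=x-\frac{\alpha(1-\alpha)}{x-(2\alpha-1)} .
\]
The error is not cosmetic, for the following reasons.
\begin{itemize}
\item The reflection $x\mapsto 2\alpha-1-x$ maps $(\alpha-1,\alpha)$ onto itself and carries the correct kernel to minus yours. So your middle-interval principal value is $-(\alpha-\frac12)-\alpha(1-\alpha)\ln\frac{\alpha}{1-\alpha}$ instead of $\alpha-\frac12+\alpha(1-\alpha)\ln\frac{\alpha}{1-\alpha}$, while the half-residue $i\pi\alpha(1-\alpha)e^{2in\pi\alpha}$ is unchanged.
\item Carried through, your plan therefore gives the $\sin(2n\pi\alpha)$ term of \eqref{eqn-QC-deform-cap-var} with the wrong sign. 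Your check at $\alpha=\frac12$ cannot detect this, since there $2\alpha-1=0$ and $\sin(n\pi)=0$.
\item The indentation produces the whole $2\cos(2n\pi\alpha)$ term, not part of the sine coefficient. The sine term comes only from the middle-interval principal value, and no factors $e^{2in\pi\alpha}-1$ occur.
\item Your spurious constant $-(2\alpha-1)$ is what creates the ``$(2\alpha-1)x$'' divergence you plan to cancel against $[-R,R]$. That cancellation is unavailable, since the $z$-plane segment contributes $\frac1n\int_{-R}^R x\,dx=0$. It is also unnecessary: real-axis contributions off $(\alpha-1,\alpha)$ are real and drop out of $\operatorname{Re}\frac{1}{2i}\oint$.
\item The scaling factor in Step~1 is $b_t^2=4t/(\alpha(1-\alpha))$, not $t/(\alpha(1-\alpha))$.
\end{itemize}

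With these corrections your route works, and it is a mild variant of the paper's. The paper pulls back both terms of $g_t'^2-1$ and uses the single primitive $\phi^{1-n}(1/\phi'-\phi')\bar\phi^{\,n}$. There the large arc vanishes because $1/\phi'-\phi'=O(w^{-2})$, but this costs an extra boundary term $-\bar\phi^{\,n}\phi^{2-n}P$ on $(\alpha-1,\alpha)$, which the paper kills with a Beta-function identity. Treating $-\iint\nu\,dA$ in the $z$-plane, as you do, avoids that term, because $\bar z^{\,n}/(nz^{n-1})$ is real on $\R$ and continuous across the slit. Your ``main obstacle'' at infinity also disappears if you pull the $w$-arc back to $|z|=R$ rather than comparing it with $|w|=R$. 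Since $\frac{\bar H}{H'}\,dw=\frac{\bar z^{\,n}}{nz^{n-1}}g_t'(z)^2\,dz$, both terms use the same truncation $|z|<R$, and the two arc terms differ by
\[
\int_{|z|=R,\ \operatorname{Im}z>0}\frac{\bar z^{\,n}}{nz^{n-1}}\bigl(g_t'(z)^2-1\bigr)\,dz
=-\frac{4ti}{n}\int_0^\pi e^{-2in\theta}\,d\theta+O(R^{-1})=O(R^{-1}).
\]
So there is no $O(1)$ mismatch to control.
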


Lemmas~\ref{lemma-QC-deform-1} and \ref{lemma-QC-deform-2} immediately
imply the following corollary.

\begin{corollary}\label{coro-QC-deform}
For the segment $\eta$ in \eqref{eqn-QC-deform-2}
with $\alpha\in(1/2,1)$, and for
$\nu(z)=a\bar z^{\,n-1}/z^{n-1}$ with $a\in(-1,1)$ and an integer
$n\ge1$, the oscillatory first variations have the Sung--Wang form
\eqref{eqn-Wang-one}--\eqref{eqn-Wang-two}, interpreted as circular
principal values.
\end{corollary}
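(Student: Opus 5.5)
The corollary is a direct combination of Lemma~\ref{lemma-QC-deform-1} and Lemma~\ref{lemma-QC-deform-2} with the explicit derivatives \eqref{eqn-QC-deform-3}--\eqref{eqn-QC-deform-cap-var}. I will check two things: that the quantities on both sides of each identity are the same objects, and that the derivatives on the left of \eqref{eqn-QC-deform-3}--\eqref{eqn-QC-deform-cap-var} really are the oscillatory first variations of $\eta$ under the asymptotically normalized map $\phi^{\eps\nu}$.

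\textbf{Step 1: the variations are defined and computed correctly.} For $|\eps a|<1$, the map $\phi^{\eps\nu}(z)=(z^n+\eps a\bar z^{\,n})^{1/n}$, with the branch fixed in the footnote, is a quasiconformal self-map of $\HH$ with coefficient $\eps\nu$. It sends the ray $\eta$ to the ray $\eta^{\eps\nu}$ of angle $\pi(1-\tilde\theta)$. The modulus of $\eta^{\eps\nu}(t)$ carries the factor $[1+a^2\eps^2+2a\eps\cos(2n\pi\alpha)]^{1/(2n)}$. By scaling, a ray of angle $\pi(1-\beta)$ parametrized as $c\sqrt{t}$ has driving function $\kappa(\beta)\sqrt{\tau}$ in its own capacity time $\tau=\sigma(\beta)c^2 t$; the functions $\kappa,\sigma$ are read off from \cite[Example~4.12]{La05}, i.e.\ from $\phi$ above. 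Hence $\lambda_t^{\eps\nu}$ and $a_t^{\eps\nu}$ are smooth functions of $(\tilde\theta,\text{length factor})$, and both of these are real-analytic in $\eps$ near $0$.

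\textbf{Step 2: differentiate.} Apply the chain rule with
\[
\partial_\eps\tilde\theta|_{0}=-\frac{a}{n\pi}\sin(2n\pi\alpha),
\qquad
\partial_\eps(\text{length factor})|_0=\frac{a}{n}\cos(2n\pi\alpha).
\]
The $\cos$ terms in \eqref{eqn-QC-deform-3}--\eqref{eqn-QC-deform-cap-var} come from the dilation. The $\sin$ terms, with their $\ln\frac{\alpha}{1-\alpha}$ factors, come from $\partial_\beta$ of $\kappa$ and $\sigma$ at $\beta=\alpha$. This is the derivation already recorded before the lemmas, so I only cite it.

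\textbf{Step 3: match the sides.} Lemma~\ref{lemma-QC-deform-1} identifies the circular principal value $\mathcal L(\nu)$ with the right-hand side of \eqref{eqn-QC-deform-3}. Lemma~\ref{lemma-QC-deform-2} identifies $\mathcal S(\nu)$ with the right-hand side of \eqref{eqn-QC-deform-cap-var}. Equating gives \eqref{eqn-Wang-one}--\eqref{eqn-Wang-two} with integrals read as principal values, for all $t>0$ and $\alpha\in(1/2,1)$. At $t=0$ both sides vanish, since the initial driving value and capacity are zero before and after deformation, and $g_0=\mathrm{id}$ makes both integrands zero.

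\textbf{Main obstacle.} The only delicate point is consistency of conventions. The principal value must be the circular one used in the lemmas, because the integrands are not absolutely integrable at $\infty$ when $\nu$ does not decay. The branch of $\log(1+\eps a e^{2in\pi\alpha})$ must also be the one producing $\tilde\theta=\alpha$ at $\eps=0$. I would verify both against the statements of the lemmas; everything else is immediate.
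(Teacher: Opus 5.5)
Your proposal is correct and follows the paper's route: the corollary is simply Lemmas~\ref{lemma-QC-deform-1} and~\ref{lemma-QC-deform-2} combined with the explicit derivatives \eqref{eqn-QC-deform-3}--\eqref{eqn-QC-deform-cap-var}, where $\phi^{\eps\nu}$ is the model map because $\nu_0=0$. Your Steps~1--2 re-verify those explicit derivatives, which the paper records before the lemmas rather than in the proof; the chain rule through $\tilde\theta$ and the length factor does reproduce both formulas.
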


\begin{remark}\label{remark-QC-deform}
Corollary~\ref{coro-QC-deform} does not extend to an arbitrary
non-decaying Beltrami differential.  Consider
$\nu(z)=a z/\bar z$ with $a\in(0,1)$.  The induced quasiconformal
mapping is the radial stretching
$\phi^{\eps\nu}(z)=z|z|^{\frac{2a\eps}{1-a\eps}}$.
The variation of the driving function for the image curve
$\phi^{\eps\nu}(\eta)$ satisfies the following formula for $t>0$;
at $t=0$ its right-hand side is interpreted by continuous extension as
zero, since $\sqrt{t}\log t\to0$:
\begin{align}\label{eqn-radial-stretch-variation}
\left.\frac{\dd\lambda^{\eps\nu}(t)}{\dd\eps}\right|_{\eps=0}
=
\frac{4a(2\alpha-1)\sqrt t}{\sqrt{\alpha(1-\alpha)}}
\left[
\ln2+\frac12\ln t
+\Bigl(\alpha-\frac12\Bigr)\ln\Bigl(\frac{\alpha}{1-\alpha}\Bigr)
\right].
\end{align}
\begin{reviewchange}
For $t>0$ and $\alpha\in(1/2,1)$, the proposed circular
principal value does not exist. Indeed,
\[
\nu(z)\left(\frac{g_t'(z)^2}{g_t(z)-\lambda_t}-\frac1z\right)
=\frac{a\lambda_t}{|z|^2}+O(|z|^{-3}),
\qquad z\to\infty,
\]
so its truncated variation integral equals
\[
-\frac2\pi\operatorname{Re}\iint_{\HH\cap\{|z|<R\}}
\nu(z)\left(\frac{g_t'(z)^2}{g_t(z)-\lambda_t}-\frac1z\right)dA(z)
=-2a\lambda_t\log R+O(1).
\]
Here the integrand is taken off the zero-area slit. The radial map
fixes $0,1,\infty$, so its finite geometric variation is covered by
Theorem~\ref{thm-main-normal}; the normalization term in that theorem
cancels precisely this nonintegrable tail.
\end{reviewchange}
\end{remark}

Corollary~\ref{coro-QC-deform} and Remark~\ref{remark-QC-deform}
motivate the oscillatory class treated in Theorem~\ref{thm-main-hh} below.

\subsection{Main results}\label{sec-main-sdh}

We begin with the standard normalization.  Recall that a canonical,
and perhaps the most commonly used, normalization for solutions of the
Beltrami equation on the Riemann sphere is obtained by fixing the three
marked points $0,1,\infty$.  One advantage of this normalization is that
it is available for an arbitrary bounded Beltrami differential, without
any support or decay assumption at infinity.  It is therefore natural
to establish our general variational formulas first in this setting.

More precisely, let $\nu\in L^\infty(\HH)$ and extend it to
$\C$ by reflection across the real axis:
\begin{align}\label{eqn-symmetric-extension}
\widehat\nu(z)
=
\begin{cases}
\nu(z), & z\in\HH,\\[2mm]
\overline{\nu(\bar z)}, & z\in-\HH.
\end{cases}
\end{align}
For every real $\eps$ satisfying
\[
|\eps|\,\|\nu\|_\infty<1,
\]
the measurable Riemann mapping theorem (see \cite{Ah66,LV73,AIM09}) gives a unique quasiconformal
homeomorphism
\[
\psi_{\mathrm{std}}^{\eps\nu}\colon
\widehat{\C}\longrightarrow\widehat{\C}
\]
whose Beltrami coefficient is $\eps\widehat\nu$ and which satisfies
\begin{align}\label{eqn-standard-normalization-main}
\psi_{\mathrm{std}}^{\eps\nu}(0)=0,\qquad
\psi_{\mathrm{std}}^{\eps\nu}(1)=1,\qquad
\psi_{\mathrm{std}}^{\eps\nu}(\infty)=\infty.
\end{align}
By uniqueness and the symmetry of \eqref{eqn-symmetric-extension},
$\psi_{\mathrm{std}}^{\eps\nu}$ commutes with reflection in $\R$.
Consequently it preserves $\R$ and $\HH$, and hence its restriction
to $\HH$ is the unique quasiconformal self-map of $\HH$ with Beltrami
coefficient $\eps\nu$ satisfying
\eqref{eqn-standard-normalization-main}.

We now introduce the corresponding Loewner quantities.  For a fixed
$\review{0\le t<T_+}$, put
\[
K_t=\gamma[0,t],\qquad 
K_t^\circ=K_t\cap\HH,\qquad
D_t=\HH\setminus\gamma[0,t],
\qquad
K_{t,\mathrm{std}}^{\eps\nu}
=
\psi_{\mathrm{std}}^{\eps\nu}(K_t).
\]
Let
\[
g_{t,\mathrm{std}}^{\eps\nu}\colon
\HH\setminus K_{t,\mathrm{std}}^{\eps\nu}\longrightarrow\HH
\]
\review{be the hydrodynamically normalized conformal map. The index $t$
always denotes the original capacity time: the deformed curve has
half-plane capacity $2a_{t,\mathrm{std}}^{\eps\nu}$ at that same point
of the original parametrization. Thus}
\begin{align}\label{eqn-deformed-hydrodynamic-normalization}
g_{t,\mathrm{std}}^{\eps\nu}(z)
=
z+\frac{2a_{t,\mathrm{std}}^{\eps\nu}}{z}
+O\!\left(\frac1{z^2}\right),
\qquad z\to\infty.
\end{align}
Equivalently,
\[
2a_{t,\mathrm{std}}^{\eps\nu}
=
\operatorname{hcap}
\bigl(K_{t,\mathrm{std}}^{\eps\nu}\bigr).
\]
The corresponding driving point is defined by
\begin{align}\label{eqn-deformed-driving-standard}
\lambda_{t,\mathrm{std}}^{\eps\nu}
=
g_{t,\mathrm{std}}^{\eps\nu}
\bigl(
\psi_{\mathrm{std}}^{\eps\nu}(\gamma(t))
\bigr),
\end{align}
Here the value at the tip is understood through the
prime-end extension of the mapping-out function,
at the prime end corresponding to the tip;
see \cite{Po92}.

At $\eps=0$ we recover the original Loewner chain:
\[
\psi_{\mathrm{std}}^{0}=\operatorname{id},
\qquad
g_{t,\mathrm{std}}^{0}=g_t,
\qquad
a_{t,\mathrm{std}}^{0}=t,
\qquad
\lambda_{t,\mathrm{std}}^{0}=\lambda_t.
\]
We use a dot to denote the first variation at $\eps=0$:
\begin{align}\label{eqn-dot-standard-notation}
\dot\lambda_{t,\mathrm{std}}
:=
\left.
\frac{\partial}{\partial\eps}
\lambda_{t,\mathrm{std}}^{\eps\nu}
\right|_{\eps=0},
\qquad
\dot a_{t,\mathrm{std}}
:=
\left.
\frac{\partial}{\partial\eps}
a_{t,\mathrm{std}}^{\eps\nu}
\right|_{\eps=0}.
\end{align}

The following theorem gives the first-variation formulas in this
standard normalization.  It is the basic result of the paper; the
formulas for the asymptotically normalized deformation will subsequently
be obtained from it by comparing the two normalizations.
\begin{theorem}\label{thm-main-normal}
Let $\nu\in L^\infty(\HH)$, and let
$\psi_{\mathrm{std}}^{\eps\nu}$,
$\lambda_{t,\mathrm{std}}^{\eps\nu}$, and
$a_{t,\mathrm{std}}^{\eps\nu}$ be as defined above.  Then, for every fixed $0\le t<T_+$, the maps
\[
\eps\longmapsto\lambda_{t,\mathrm{std}}^{\eps\nu},
\qquad
\eps\longmapsto a_{t,\mathrm{std}}^{\eps\nu}
\]
are differentiable at $\eps=0$, and
\begin{align}
\dot\lambda_{t,\mathrm{std}}
&=
-\frac{2}{\pi}\operatorname{Re}
\iint_{D_t}
\nu(z)\left(
\frac{g_t'(z)^2}{g_t(z)-\lambda_t}
-\frac1z
-\frac{\lambda_t}{z(z-1)}
\right)\dA(z)
\nonumber\\
&\quad
+\frac{2}{\pi}\operatorname{Re}
\iint_{K_t^\circ}
\nu(z)\left(
\frac1z
+\frac{\lambda_t}{z(z-1)}
\right)\dA(z),
\label{eqn-main-norm-1}
\\
\dot a_{t,\mathrm{std}}
&=
\frac{1}{\pi}\operatorname{Re}
\iint_{D_t}
\nu(z)\left(
g_t'(z)^2-1+\frac{4t}{z(z-1)}
\right)\dA(z)
\nonumber\\
&\quad
+\frac{1}{\pi}\operatorname{Re}
\iint_{K_t^\circ}
\nu(z)\left(
-1+\frac{4t}{z(z-1)}
\right)\dA(z).
\label{eqn-main-norm-2}
\end{align}
All four integrals are absolutely convergent.
\end{theorem}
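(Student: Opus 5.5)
The plan is to use a Stoilow-type factorization through the slit domain and reduce both variations to the first variation of a single quasiconformal map of $\Ch$ that is transported to the $g_t$-plane.

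\textbf{Step 1 (factorization).} For $0<t<T_+$, define on $\HH$ the map
\[
G^\eps:=g_{t,\mathrm{std}}^{\eps\nu}\circ\psi_{\mathrm{std}}^{\eps\nu}\circ g_t^{-1}.
\]
It is a quasiconformal self-map of $\HH$, and it extends continuously to $\R$, including across the interval $g_t(\partial K_t)$, via prime ends. By the chain rule, its Beltrami coefficient is the push-forward
\[
\mu^\eps=\bigl(\eps\nu\circ g_t^{-1}\bigr)\,\overline{(g_t^{-1})'}/(g_t^{-1})'
\]
of $\eps\nu|_{D_t}$. Reflecting across $\R$ gives a quasiconformal homeomorphism of $\Ch$ with coefficient $\eps\widehat{\mu}$ that fixes $\infty$.

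The coefficient $\nu|_{K_t^\circ}$ does not enter $\mu^\eps$ at all. It influences $G^\eps$ only through the normalization, because $\psi_{\mathrm{std}}^{\eps\nu}$ is normalized using all of $\nu$. This is the mechanism behind the second integrals in \eqref{eqn-main-norm-1}--\eqref{eqn-main-norm-2}.

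The quantities of interest are read off from $G^\eps$:
\[
\lambda_{t,\mathrm{std}}^{\eps\nu}=G^\eps(\lambda_t),
\]
and $2a_{t,\mathrm{std}}^{\eps\nu}$ is obtained from the expansion of $G^\eps$ at $\infty$, using $g_{t,\mathrm{std}}^{\eps\nu}(\zeta)=\zeta+2a^\eps_{t,\mathrm{std}}/\zeta+O(\zeta^{-2})$ and $g_t^{-1}(w)=w-2t/w+O(w^{-2})$.

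\textbf{Step 2 (normalized Ahlfors--Bers variation).} I will write $G^\eps=A^\eps\circ F^\eps$. Here $F^\eps$ is the normalized solution for $\eps\widehat\mu$ fixing $0,1,\infty$, and $A^\eps(w)=\alpha^\eps w+\beta^\eps$ is real affine, since both maps fix $\infty$ and preserve $\R$. The standard $0,1,\infty$ Ahlfors--Bers formula gives
\[
\dot F(w)=-\frac1\pi\iint_{\C}\widehat\mu(\zeta)\frac{w(w-1)}{\zeta(\zeta-1)(\zeta-w)}\dA(\zeta)
\]
as a derivative that is locally uniform in $w$. The same formula holds for $\dot\psi_{\mathrm{std}}$ with $\widehat\nu$ in place of $\widehat\mu$.

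Pulling $\dot F$ back by $w=g_t(z)$ uses $dA(\zeta)=|g_t'(z)|^2dA(z)$ and converts $\widehat\mu\,dA$ into $\nu(z)g_t'(z)^2\dA(z)$ on $D_t$. Combining the two half-planes through the symmetry produces the $\operatorname{Re}$.

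\textbf{Step 3 (fixing $\dot\alpha,\dot\beta$; the main obstacle).} To identify $\dot\alpha$ and $\dot\beta$, I will compare $G^\eps$ and $\psi^{\eps\nu}_{\mathrm{std}}\circ g_t^{-1}$ near $\infty$. They differ only by the conformal factor $g_{t,\mathrm{std}}^{\eps\nu}(\zeta)=\zeta+O(1/\zeta)$, so
\[
G^\eps(w)-\psi_{\mathrm{std}}^{\eps\nu}\bigl(g_t^{-1}(w)\bigr)\to0 .
\]
Matching the growing part ($\propto w$), the constant part, and finally the $1/w$ coefficient requires the asymptotic expansion of $\dot F(w)$ and $\dot\psi_{\mathrm{std}}(z)$ as $w,z\to\infty$. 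For general $\nu\in L^\infty$ these expansions are delicate. The kernel expansion
\[
\frac{w(w-1)}{\zeta(\zeta-1)(\zeta-w)}=-\frac{w}{\zeta(\zeta-1)}-\frac1\zeta-\frac1w+\cdots
\]
has coefficients such as $\iint\widehat\nu\,\dA$ that diverge.

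I will avoid expanding term by term. Instead I form the difference of the two kernel representations,
\[
\dot G(w)-\dot\psi_{\mathrm{std}}\bigl(g_t^{-1}(w)\bigr)-\frac{2t}{w}\,\dot\psi_{\mathrm{std}}'(\cdot)+\cdots,
\]
written as one integral over $\C$. On $D_t$ the integrand combines $g_t'(z)^2/(g_t(z)-w)$ with $1/(z-g_t^{-1}(w))$; on $K_t^\circ$ only the $\psi$-kernel survives. I then show that this integral has a convergent expansion at $\infty$, using $g_t'(z)^2-1=O(|z|^{-2})$ and $g_t(z)-z=O(1/|z|)$. The subtracted normalization kernels $1/z+\lambda_t/(z(z-1))$ and $4t/(z(z-1))$ are exactly what remains after the cancellation. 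Absolute convergence of all four integrals then follows from three facts: decay $O(|z|^{-3})$ at $\infty$, local integrability of $1/(g_t-\lambda_t)$ with $g_t'^2$ near the tip (via change of variables), and boundedness of $K_t$.

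\textbf{Step 4 (evaluation and justification).} Evaluating $\dot G$ at $w=\lambda_t$ gives \eqref{eqn-main-norm-1}, and the $1/w$ coefficient gives \eqref{eqn-main-norm-2}; the case $t=0$ is trivial. Differentiability in $\eps$ follows because Ahlfors--Bers gives $F^\eps=\mathrm{id}+\eps\dot F+o(\eps)$ locally uniformly, together with the corresponding statement for $\psi_{\mathrm{std}}^{\eps\nu}$. The uniformity extends up to the boundary point $\lambda_t$ by Hölder continuity of normalized quasiconformal maps.
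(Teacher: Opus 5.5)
Your Steps 1--2 are the paper's setup. Your $F^\eps$ is the paper's $\Psi_t^\eps$, your $A^\eps$ is its affine map $I_\eps$, and $g_{t,\mathrm{std}}^{\eps\nu}=A^\eps\circ F^\eps\circ g_t\circ(\psi_{\mathrm{std}}^{\eps\nu})^{-1}$. The gap is Step~3. For non-decaying $\nu$ this is where all the difficulty lies, and your proposal only announces it. Two ingredients are missing.

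\textbf{The object to expand.} $G^\eps$ has no expansion at $\infty$, because its Beltrami coefficient $\eps\nu_t$ need not vanish there. What must be expanded is the conformal map $g_{t,\mathrm{std}}^{\eps\nu}$. Its $\eps$-derivative is $\dot\alpha\,g_t+\dot\beta+W_t$, with $W_t=\dot F\circ g_t-g_t'\,\dot\psi_{\mathrm{std}}$. The Ahlfors--Bers expansion is only locally uniform, so you must justify that $\partial_\eps$ commutes with taking Laurent coefficients at $\infty$. The paper does this with Cauchy's formula on a fixed circle in the common exterior of $K_t$ and $K^{\eps\nu}_{t,\mathrm{std}}$ (Lemma~\ref{lemma-common-annulus}). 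Your displayed combination with $\frac{2t}{w}\dot\psi_{\mathrm{std}}'$ treats $\dot\psi_{\mathrm{std}}$ as holomorphically Taylor-expandable. But $\partial_{\bar z}\dot\psi_{\mathrm{std}}=\widehat\nu$, so it is not holomorphic near $\infty$ and in general is only $W^{1,p}_{\mathrm{loc}}$. For a real constant $\nu\equiv c$, $\dot\psi_{\mathrm{std}}(z)=c(\bar z-z)$.

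\textbf{The actual coefficients of $W_t$.} This is the more serious gap. Expanding the combined kernel in powers of $1/z$ under the integral sign is not justified by $g_t'^2-1=O(|z|^{-2})$ and $g_t(z)-z=O(1/|z|)$. The expansions of $1/(s-z)$ and $1/(g_t(s)-g_t(z))$ are not uniform for $|s|\gtrsim|z|$. Moreover, the resulting individual coefficient integrals need not exist, even as circular principal values. For $\nu\equiv c$, $\iint_{|s|<R}\widehat\nu\,\dA=c\pi R^2$. For $\nu(z)=z/\bar z$, $\iint_{|s|<R}\widehat\nu\,s^{-1}(s-1)^{-1}\dA\sim2\pi\log R$. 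Only specific combinations converge, and extracting them legitimately is exactly what has to be proved.

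\textbf{How the paper closes this.} It first treats compactly supported $\nu$. There $\dot\psi_{\mathrm{std}}$ and $\dot F$ are holomorphic near $\infty$ with explicit coefficients $P,Q,M$. It then truncates to $\widehat\nu\,\mathbf{1}_{\{|z|<R\}}$. The $O(|\zeta|^{-3})$ bound on the Ahlfors--Bers kernel gives locally uniform convergence of the variation fields, hence convergence of the fixed-circle Cauchy integrals for $\dot B_j$. The right-hand sides converge by dominated convergence (Lemma~\ref{lemma-standard-kernel-integrability}).

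\textbf{Completing your direct route.} Your approach can also be finished without truncation. Write $W_t(z)=-\frac1\pi\iint\widehat\nu(s)\,k(s,z)\dA(s)$. Compute the coefficients of $W_t$ as Cauchy integrals over a fixed circle $|z|=\rho$, and interchange the integrals by Fubini. This is legitimate because each part of $k(s,z)$ is $O(|s|^{-3})$ uniformly on the circle and locally integrable in $s$. For fixed $s$ off the trace, $k(s,\cdot)$ is holomorphic on $\{|z|>\rho\}$: the residues at $z=s$ of $g_t'(s)^2/(g_t(s)-g_t(z))$ and $g_t'(z)/(z-s)$ cancel. For $s$ on the trace only the second part is present, and its pole lies inside the circle. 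So the coefficients of $k(s,\cdot)$ may be read off by expanding at $z=\infty$ with $s$ fixed. The resulting coefficient functions are $O(|s|^{-3})$ and give precisely the kernels in \eqref{eqn-main-norm-1}--\eqref{eqn-main-norm-2}. Either argument supplies the step your proposal leaves open.
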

\begin{reviewchange}
The notation $K_t^\circ=K_t\cap\HH$ denotes the portion of the
trace in the upper half-plane, not its topological interior. Jordan
arcs may have positive planar area \cite{Os03}; hence the trace
integrals cannot in general be omitted. They vanish if
$\Area(K_t^\circ)=0$ or $\nu=0$ almost everywhere on $K_t^\circ$.
The precise zero-extension convention and a test showing the necessity
of these terms are given in \S\ref{sec-trace-area}.
\end{reviewchange}
We next turn to a normalization adapted to the behavior at infinity.
As in \cite{SW24}, from the Loewner-theoretic point of view one would like the
quasiconformal deformation itself to satisfy a hydrodynamic-type
normalization at infinity, or at least to be asymptotic to the identity
up to an additive constant.  In the compactly supported setting this is
precisely the normalization used in \textbf{Theorem~A}, namely
\[
\psi^{\eps\nu}(z)-z=O(1),
\qquad z\to\infty.
\]
As observed above, however, such a normalization need not exist for a
general bounded Beltrami differential which does not decay at infinity.

Motivated by the example in \S~\ref{sec-motiv-exam},
 there is a
natural substitute for the oscillatory coefficients.  Suppose that the symmetric extension of $\nu$ to
$\C$ has the form
\[
\nu(z)
=
a\Bigl(\frac{\bar z}{z}\Bigr)^{n-1}
+\nu_0(z),
\]
where the first term is the non-decaying part and $\nu_0$ has sufficient
integrability at infinity.  The Beltrami equation corresponding to the
leading term is modeled by the explicit quasiconformal map
\[
z\longmapsto
\bigl(z^n+a\bar z^{\,n}\bigr)^{1/n}.
\]
It is therefore natural to require the quasiconformal deformation to
differ from this model only by an additive constant and an $o(1)$ term
at infinity.  We refer to this as the \emph{asymptotic normalization}.

For sufficiently small $|\eps|$, let $\phi^{\eps\nu}$ denote the
asymptotically normalized quasiconformal self-map of $\HH$ with Beltrami
coefficient $\eps\nu$, whose existence and uniqueness are established
in Theorem~\ref{thm-main-hh} below.  
Thus
\[
\phi^{\eps\nu}(0)=0
\qquad\text{and}\qquad
\phi^{\eps\nu}(z)
=
\bigl(z^n+\eps a\bar z^{\,n}\bigr)^{1/n}
+B_\eps+o(1),
\qquad z\to\infty.
\]

We use the subscript $\mathrm{asy}$ to distinguish the Loewner
quantities associated with this asymptotic normalization from those
associated with the standard normalization above.  For each fixed
$\review{0\le t<T_+}$, set
\[
K_{t,\mathrm{asy}}^{\eps\nu}
:=
\phi^{\eps\nu}(K_t)
=
\phi^{\eps\nu}\bigl(\gamma[0,t]\bigr).
\]
Let
\[
g_{t,\mathrm{asy}}^{\eps\nu}\colon
\HH\setminus K_{t,\mathrm{asy}}^{\eps\nu}
\longrightarrow\HH
\]
be the hydrodynamically normalized conformal map,
\begin{align}\label{eqn-asy-loewner-map}
g_{t,\mathrm{asy}}^{\eps\nu}(z)
=
z+\frac{2a_{t,\mathrm{asy}}^{\eps\nu}}{z}
+O\!\left(\frac1{z^2}\right),
\qquad z\to\infty.
\end{align}
Thus 
\[
2a_{t,\mathrm{asy}}^{\eps\nu}
=
\operatorname{hcap}
\bigl(K_{t,\mathrm{asy}}^{\eps\nu}\bigr).
\]
The corresponding driving function is
\begin{align}\label{eqn-asy-driving-point}
\lambda_{t,\mathrm{asy}}^{\eps\nu}
=
g_{t,\mathrm{asy}}^{\eps\nu}
\bigl(\phi^{\eps\nu}(\gamma(t))\bigr).
\end{align}
We write (if these derivatives exist)
\begin{align}\label{eqn-asy-dot-notation}
\dot\lambda_{t,\mathrm{asy}}
:=
\left.
\frac{\partial}{\partial\eps}
\lambda_{t,\mathrm{asy}}^{\eps\nu}
\right|_{\eps=0},
\qquad
\dot a_{t,\mathrm{asy}}
:=
\left.
\frac{\partial}{\partial\eps}
a_{t,\mathrm{asy}}^{\eps\nu}
\right|_{\eps=0}.
\end{align}

The next theorem first establishes the existence and uniqueness of the
asymptotically normalized quasiconformal map and then gives the
corresponding first-variation formulas.

\begin{theorem}\label{thm-main-hh}
Suppose that $\nu\in L^\infty(\C)$ is symmetric with respect to the real
axis, satisfies $\|\nu\|_\infty<1$, and has the form
\begin{align}\label{eqn-asy-class}
\nu(z)
=
a\,\frac{\bar z^{\,n-1}}{z^{n-1}}
+\nu_0(z),
\end{align}
where $a\in(-1,1)$, $n\ge1$ is an integer, and
$\nu_0\in L^r(\C)$ for some $0<r<2$.
Then the following hold.

\begin{enumerate}
\item
There exists a unique quasiconformal map
$\phi^\nu\colon\HH\to\HH$ satisfying
\[
\partial_{\bar z}\phi^\nu
=
\nu\,\partial_z\phi^\nu,
\qquad
\phi^\nu(0)=0,
\]
and
\begin{align}\label{eqn-asy-normalization}
\phi^\nu(z)
=
\bigl(z^n+a\bar z^{\,n}\bigr)^{1/n}
+B+o(1),
\qquad z\to\infty.
\end{align}

\item
For $\eps\in\R$ with $|\eps|$ sufficiently small, let
$\phi^{\eps\nu}$ be the asymptotically normalized solution associated
with $\eps\nu$, and let
$\lambda_{t,\mathrm{asy}}^{\eps\nu}$ and
$a_{t,\mathrm{asy}}^{\eps\nu}$ be defined by
\eqref{eqn-asy-loewner-map}--\eqref{eqn-asy-driving-point}.  Then
\begin{align*}
\dot\lambda_{t,\mathrm{asy}}
&=
-\frac2\pi
\PV\!\operatorname{Re}\iint_{D_t}
\nu(z)\left(
\frac{g_t'(z)^2}{g_t(z)-\lambda_t}-\frac1z
\right)\dA(z)
\\
&\quad
+\frac2\pi\operatorname{Re}\iint_{K_t^\circ}
\frac{\nu(z)}{z}\dA(z),
\\
\dot a_{t,\mathrm{asy}}
&=
\frac1\pi
\PV\operatorname{Re}\iint_{D_t}
\nu(z)\bigl(g_t'(z)^2-1\bigr)\dA(z)
\\
&\quad
-\frac1\pi\operatorname{Re}\iint_{K_t^\circ}
\nu(z)\dA(z).
\end{align*}
Here $\PV$ denotes the circular principal value at infinity, whereas
the integrals over $K_t^\circ$ are ordinary absolutely convergent
integrals.
\end{enumerate}
\end{theorem}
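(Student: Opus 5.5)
Part (1) and part (2) call for separate arguments: part (1) is built around the explicit model map, and part (2) is obtained from Theorem~\ref{thm-main-normal} by comparing normalizations, as the abstract announces.

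\textbf{Part (1): existence and uniqueness.} Let
\[
M_a(z)=\bigl(z^n+a\bar z^{\,n}\bigr)^{1/n}.
\]
This is a quasiconformal self-map of $\C$ with Beltrami coefficient $a(\bar z/z)^{n-1}$. It commutes with conjugation, is homogeneous of degree one, and fixes $0$ and $\infty$. I will look for $\phi^\nu$ in the form $\phi^\nu=F\circ M_a$.

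The composition rule forces $F$ to have Beltrami coefficient
\[
\mu=\Bigl(\frac{\nu-\mu_{M_a}}{1-\overline{\mu_{M_a}}\nu}\cdot\frac{\partial_z M_a}{\overline{\partial_z M_a}}\Bigr)\circ M_a^{-1}.
\]
The numerator is $\nu_0$, and $M_a$ is bi-Lipschitz (it is homogeneous and quasiconformal). Hence $|\mu|\lesssim|\nu_0|\circ M_a^{-1}$, which lies in $L^r(\C)$ with $r<2$, and $\|\mu\|_\infty<1$.

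For such $\mu$ I construct $F$ by the standard method. Split $\mu=\mu\chi_{|z|<R}+\mu\chi_{|z|\ge R}$ and solve the Beltrami equation $F(z)=z+C(h)$ by the Neumann series in $L^p$, $p>2$ close to $2$. The membership $\mu\in L^r$ with $r<2$ gives integrability at infinity, via H\"older on dyadic shells in the form $\sum_k 2^{k(2/r-1)\cdot\ldots}$. From this I expect the Cauchy transform term to behave as $B+o(1)$ at infinity, so that $F(z)=z+B'+o(1)$.

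Symmetry of $\nu$ gives symmetry of $\mu$, and uniqueness then makes $F$ commute with conjugation. Composing with a real translation imposes $\phi^\nu(0)=0$, so the constant $B$ is simply whatever remains.

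For uniqueness, suppose $\phi_1,\phi_2$ are two such maps. Then $\phi_1\circ\phi_2^{-1}$ is conformal on $\HH$ and reflects to an entire function of the form $z+c+o(1)$. By Liouville it is a translation, and the condition at $0$ forces it to be the identity.

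\textbf{Part (2): comparison with the standard normalization.} By uniqueness of the $0,1,\infty$ normalization, $\phi^{\eps\nu}=c_\eps\,\psi^{\eps\nu}_{\mathrm{std}}$ with $c_\eps=\phi^{\eps\nu}(1)>0$. Hydrodynamic normalization scales as follows: $g$ for $cK$ is $c\,g(\cdot/c)$. Therefore
\[
\lambda^{\eps\nu}_{t,\mathrm{asy}}=c_\eps\lambda^{\eps\nu}_{t,\mathrm{std}},\qquad a^{\eps\nu}_{t,\mathrm{asy}}=c_\eps^2a^{\eps\nu}_{t,\mathrm{std}}.
\]
Since $c_0=1$, this gives
\[
\dot\lambda_{t,\mathrm{asy}}=\dot\lambda_{t,\mathrm{std}}+\dot c\,\lambda_t,\qquad \dot a_{t,\mathrm{asy}}=\dot a_{t,\mathrm{std}}+2\dot c\,t,
\]
provided $\dot c$ exists. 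Everything therefore reduces to computing $\dot c=\partial_\eps\phi^{\eps\nu}(1)|_{0}$.

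\textbf{Computing $\dot c$.} The first variation of $M_{\eps a}$ at $z=1$ is $a/n$, since $\partial_\eps(1+\eps a)^{1/n}=a/n$. For the correction $F_\eps$, the first-order coefficient is $\eps\nu_0$ up to $O(\eps^2)$. Ahlfors--Bers with asymptotic normalization $z+B+o(1)$ then gives
\[
\dot F(z)=-\frac1\pi\iint\nu_0(w)\Bigl(\frac1{w-z}-\frac1w\Bigr)\,dA(w)
\]
over $\C$, with the second term coming from the condition at $0$. Using symmetry, this becomes twice the real part of an integral over $\HH$. At $z=1$ it yields $\frac1\pi\operatorname{Re}\iint_{\HH}\frac{2\nu_0(w)}{w(w-1)}\,dA$, up to sign conventions.

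Next I rewrite $a/n$ in the same form. For the model term,
\[
\frac1\pi\PV\operatorname{Re}\iint_\HH a(\bar w/w)^{n-1}\frac{2}{w(w-1)}\,dA
\]
should equal $a/n$ by an explicit angular and radial computation. The angular average kills everything except the relevant Fourier mode, and for $n=1$ the computation is elementary. This gives
\[
\dot c=\frac1\pi\PV\operatorname{Re}\iint_{\HH}\frac{2\nu(w)}{w(w-1)}\,dA.
\]
Substituting this into the formulas of Theorem~\ref{thm-main-normal}, the terms $\frac{\lambda_t}{z(z-1)}$ and $\frac{4t}{z(z-1)}$ cancel exactly, on both $D_t$ and $K_t^\circ$. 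This produces the stated formulas. The principal value is needed because the split of an absolutely convergent integral into pieces that are individually only PV-convergent is legitimate only in the circular PV sense.

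\textbf{Main obstacle.} I expect the hardest step to be justifying the differentiability of $c_\eps$ at $\eps=0$ and the formula for $\dot F$ when $\nu_0\in L^r$ with possibly $r<1$. The Neumann series must converge uniformly in $\eps$, and its constant term must be controlled. I would handle this by working in $L^p\cap L^{q}$ norms, with $q<2<p$, on the large-$|z|$ piece, and by bounding the $O(\eps^2)$ remainder of the Cauchy transform at the two points $0$ and $1$.
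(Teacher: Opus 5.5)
Your route is the paper's. In part~(1) you factor $\phi^\nu=F\circ M_a$ through the model map and solve for $F$ with an $L^r$ coefficient. In part~(2) you use $\phi^{\eps\nu}=c_\eps\psi^{\eps\nu}_{\mathrm{std}}$, the dilation rules, and $\dot c=a/n+\dot F(1)$.

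The genuine problem is the sign of $\dot c$, and the final cancellation depends on exactly this sign. Since $\frac1{w-1}-\frac1w=\frac1{w(w-1)}$, your own formula for $\dot F$ gives
\[
\dot F(1)=P(\nu_0)(1)=-\frac1\pi\iint_{\C}\frac{\nu_0(w)}{w(w-1)}\dA(w).
\]
This is not the plus sign you adopted \emph{up to sign conventions}.

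The model integral is also negative. In polar coordinates $\frac{(\bar w/w)^{n-1}}{w(w-1)}\,dA=\frac{e^{-i(2n-1)\theta}}{re^{i\theta}-1}\,dr\,d\theta$. For $r<1$, the expansion $-\sum_k r^ke^{ik\theta}$ leaves $-2\pi r^{2n-1}$ after angular integration. For $r>1$, the expansion $\sum_{k\ge0}r^{-k-1}e^{-i(k+1)\theta}$ produces only the frequencies $-(2n+k)\neq0$, so the angular integral vanishes. Hence $\PV\iint_{\C}\frac{(\bar w/w)^{n-1}}{w(w-1)}\dA=-\frac\pi n$. So $\frac an=-\frac a\pi\PV\iint_{\C}\frac{(\bar w/w)^{n-1}}{w(w-1)}\dA$, the opposite of your claim. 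Altogether
\[
\dot c=-\frac1\pi\PV\iint_{\C}\frac{\nu(w)}{w(w-1)}\dA(w)=-\frac2\pi\PV\operatorname{Re}\iint_{\HH}\frac{\nu(w)}{w(w-1)}\dA(w).
\]

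This sign is forced. Put $R(z)=1/(z(z-1))$. In Theorem~\ref{thm-main-normal}, the $R$-terms on $D_t$ and $K_t^\circ$ add up, read as circular principal values, to $+\frac{2\lambda_t}{\pi}\operatorname{Re}\iint_{\HH}\nu R$ in $\dot\lambda_{t,\mathrm{std}}$ and to $+\frac{4t}{\pi}\operatorname{Re}\iint_{\HH}\nu R$ in $\dot a_{t,\mathrm{std}}$. So $\lambda_t\dot c$ and $2t\dot c$ cancel them only with the minus sign. With your $\dot c$ they are doubled, and the stated formulas do not follow.

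A quick check confirms the sign. For compactly supported $\nu$ and $a=0$, the Ahlfors--Bers field is $V_{\widehat\nu}(z)=P(\widehat\nu)(z)+\frac z\pi\iint_{\C}\frac{\widehat\nu(\zeta)}{\zeta(\zeta-1)}\dA(\zeta)$. Matching $z+\eps V_{\widehat\nu}$ with $c_\eps^{-1}\bigl(z+\eps P(\widehat\nu)\bigr)$ forces the minus sign.

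With the sign fixed, part~(2) coincides with Lemma~\ref{lemma-normalization-transfer} and Theorem~\ref{thm-universal-n}. Part~(1) has two smaller omissions:
\begin{itemize}
\item $z+C(h)$ is not automatically a homeomorphism when the coefficient is not compactly supported. The paper proves it by writing the solution as $\Phi\circ w^\mu$, with $w^\mu$ the $0,1,\infty$-normalized solution. A winding-number count on a large circle, using $z+B+o(1)$, then shows that the entire function $\Phi$ has degree one.
\item $C(h)\in C_0(\C)$ comes from running the Neumann series in $L^q\cap L^p$ with $\max\{1,r\}<q<2<p$ (Lemma~\ref{thm-astal-modify}), not from a radial split. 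This is available because $L^r\cap L^\infty\subset L^q$ for all $q\ge r$, so $r<1$ is no obstacle.
\end{itemize}

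The differentiability of $c_\eps$ that you flag is obtained in Corollary~\ref{coro-hyl-67-modified-new} as a uniform expansion $\phi^{\eps\nu}=M_{\eps a}+\eps P(\nu_0)+O(\eps^2)$ on $\C$. Its key input is the bi-Lipschitz estimate of Lemma~\ref{lemma-A-epsilon-difference}. That estimate is needed because the coefficient of $F_\eps$ is $\eps\nu_0$ transported by $M_{\eps a}$, not $\eps\nu_0$ itself.
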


The existence part is proved in
Theorem~\ref{thm-yibande-n-new}.  The variational part is obtained
in Theorem~\ref{thm-universal-n} from Theorem~\ref{thm-main-normal} by
the normalization-transfer identity, rather than by a second
calculation of the Loewner coefficients.

\begin{remark}\label{rem-n-equals-one}
When $n=1$, the leading Beltrami coefficient is constant, corresponding to an affine quasiconformal deformation; in particular, the theorem
includes the global deformation $z\mapsto z+\eps a\bar z$.  For $n>1$
and $a\ne0$, the leading coefficient has a non-decaying angular oscillation.
\end{remark}

The paper is organized in the order dictated by the transfer argument.
Section~\ref{sec-standard-normalization} establishes the variational formulas under the standard normalization, culminating in Theorem~\ref{thm-main-normal}. 
Section~\ref{sec-existence-oscillatory} develops the analytic preliminaries, constructs the asymptotically normalized quasiconformal map, and derives the oscillatory variational formulas via the normalization-transfer identity; see Theorem~\ref{thm-main-hh} and Theorem~\ref{thm-universal-n}. 
The explicit contour computations for the straight-slit example are deferred to Appendix~\ref{sec-appendix-contours}.

\section{The general theorem under standard normalization}
\label{sec-standard-normalization}
This section is devoted to the proof of  Theorem~\ref{thm-main-normal}. We shall use the notation introduced in \S~\ref{sec-main-sdh}   throughout this
section.  In particular,
\[
K_t=\gamma[0,t],\qquad
K_t^\circ=K_t\cap\HH,\qquad
D_t=\HH\setminus K_t,
\]
and
\[
g_t\colon D_t\to\HH,
\qquad
g_t(z)=z+\frac{2t}{z}+O(z^{-2}),
\qquad z\to\infty.
\]
For $\nu\in L^\infty(\HH)$, let
$\psi_{\mathrm{std}}^{\eps\nu}$ be the standardly normalized
quasiconformal deformation fixing $0,1,\infty$, and let
$\lambda_{t,\mathrm{std}}^{\eps\nu}$ and
$2a_{t,\mathrm{std}}^{\eps\nu}$ denote the driving function
and half-plane capacity of $K_{t,\mathrm{std}}^{\eps\nu}=\psi_{\mathrm{std}}^{\eps\nu}(\gamma[0,t])$.

\subsection{The normalized Ahlfors--Bers variation}
For a symmetrically extended $\mu\in L^\infty(\C)$, let
$w^{\eps\mu}$ be the solution of the Beltrami equation fixing
$0,1,\infty$. The Ahlfors--Bers theorem \cite{AB60}
(see also \cite[Theorem~4.37]{IT92}) gives
\begin{align}\label{eqn-AB-normalized-expansion}
w^{\eps\mu}(z)
=z+\eps V_\mu(z)+o(\eps)
\end{align}
locally uniformly on $\C$.
For $z\in\C\setminus\{0,1\}$, the variation field is given by
\begin{align}\label{eqn-psi-variation-integral}
V_\mu(z)
=-\frac{z(z-1)}{\pi}
\iint_{\C}
\frac{\mu(\zeta)}
{\zeta(\zeta-1)(\zeta-z)}\dA(\zeta).
\end{align}
For each such $z$, the integral is absolutely convergent,
and its kernel is $O(|\zeta|^{-3})$ as $|\zeta|\to\infty$.
The function $V_\mu$ extends continuously to $0$ and $1$,
with $V_\mu(0)=V_\mu(1)=0$.
%
%
%

\begin{lemma}\label{lemma-inverse-variation}
Under the assumptions above,
\begin{equation}\label{eqn-inverse-variation-standard}
(w^{\eps\mu})^{-1}(z)
=
z-\eps V_\mu(z)+o(\eps)
\end{equation}
locally uniformly on $\C$.
\end{lemma}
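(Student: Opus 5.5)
Set $h_\eps=(w^{\eps\mu})^{-1}$ and $\zeta_\eps=h_\eps(z)$, so that $w^{\eps\mu}(\zeta_\eps)=z$. The plan is to substitute $\zeta_\eps$ into the expansion \eqref{eqn-AB-normalized-expansion} and exploit two facts: $h_\eps\to\mathrm{id}$ locally uniformly, and $V_\mu$ is continuous.

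\textbf{Step 1: uniform convergence of the inverses.} Fix a compact set $E\subset\C$. The maps $w^{\eps\mu}$ are $K$-quasiconformal with $K$ uniformly bounded for small $|\eps|$, and they fix $0,1,\infty$. They therefore form a normal family, and the same holds for the inverses $h_\eps$. From \eqref{eqn-AB-normalized-expansion} we get $w^{\eps\mu}\to\mathrm{id}$ locally uniformly. Every subsequential limit $h$ of $h_\eps$ then satisfies $h=\mathrm{id}$, since $w^{\eps\mu}(h_\eps(z))=z$ and the convergence is uniform on compacts. Hence $h_\eps\to\mathrm{id}$ uniformly on $E$. In particular, there is a compact neighbourhood $E'$ of $E$ with $h_\eps(E)\subset E'$ for all small $|\eps|$.

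\textbf{Step 2: substitution.} Apply \eqref{eqn-AB-normalized-expansion} uniformly on $E'$ at the point $\zeta_\eps$:
\[
z=w^{\eps\mu}(\zeta_\eps)=\zeta_\eps+\eps V_\mu(\zeta_\eps)+o(\eps),
\]
where the $o(\eps)$ term is uniform in $z\in E$ because $\zeta_\eps\in E'$. Rearranging gives
\[
\zeta_\eps=z-\eps V_\mu(\zeta_\eps)+o(\eps).
\]

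\textbf{Step 3: replacing $V_\mu(\zeta_\eps)$ by $V_\mu(z)$.} By Step 1, $\sup_{z\in E}|\zeta_\eps-z|\to0$. Since $V_\mu$ is uniformly continuous on $E'$, it follows that $\sup_{z\in E}|V_\mu(\zeta_\eps)-V_\mu(z)|\to0$, so
\[
\eps\bigl(V_\mu(\zeta_\eps)-V_\mu(z)\bigr)=o(\eps)
\]
uniformly on $E$. This yields \eqref{eqn-inverse-variation-standard}.

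\textbf{Main obstacle: continuity of $V_\mu$ on all of $\C$.} Step 3 needs $V_\mu$ to be continuous everywhere, including at $0$ and $1$. The paper asserts this, and it can also be seen directly. First, $V_\mu$ is Hölder continuous: the kernel $1/(\zeta-z)$ has a standard logarithmic modulus estimate for $L^\infty$ densities on compact regions. Second, the factor $z(z-1)$ tames the singularities at $0$ and $1$. Alternatively, continuity follows because $V_\mu$ is the locally uniform limit of the difference quotients $(w^{\eps\mu}-\mathrm{id})/\eps$, which are continuous.

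Everything else is the standard inverse-function perturbation argument. Uniformity of the convergence in Step 1 is essential, and normality of the family supplies it.
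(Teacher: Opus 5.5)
Your proposal is correct and follows essentially the same argument as the paper: the inverses converge locally uniformly to the identity, you substitute $\zeta_\eps$ into the Ahlfors--Bers expansion on a fixed compact set $E'$, and uniform continuity of $V_\mu$ lets you replace $V_\mu(\zeta_\eps)$ by $V_\mu(z)$. Your normal-family justification of Step~1, which the paper only asserts, is valid, and so is your observation that $V_\mu$ is continuous as the locally uniform limit of the continuous difference quotients $(w^{\eps\mu}-\mathrm{id})/\eps$.
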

\begin{proof}
Since $w^{\eps\mu}$ are homeomorphisms converging locally uniformly
to $\mathrm{id}$, their inverses also converge locally uniformly
to $\mathrm{id}$. Fix a compact set $E\subset\C$ and write
$z_\eps=(w^{\eps\mu})^{-1}(z)$. For sufficiently small $|\eps|$,
all points $z_\eps$, with $z\in E$, lie in a fixed compact set
$E'\subset\C$. Applying \eqref{eqn-AB-normalized-expansion}
uniformly on $E'$, we obtain
\[
z=w^{\eps\mu}(z_\eps)
=z_\eps+\eps V_\mu(z_\eps)+o(\eps)
=z_\eps+\eps V_\mu(z)+o(\eps),
\]
uniformly for $z\in E$, where the last equality follows from
$z_\eps\to z$ uniformly on $E$ and the uniform continuity of
$V_\mu$ on $E\cup E'$. Rearranging proves the assertion.
\end{proof}
\begin{lemma}\label{lemma-AB-Laurent-coefficients}
If $\mu$ has compact support, then
\begin{align}\label{eqn-AB-Laurent-expansion}
V_\mu(z)
=
\frac{P_\mu}{\pi}z+\frac{Q_\mu}{\pi}
+\frac{M_\mu}{\pi z}+O(z^{-2}),
\qquad z\to\infty,
\end{align}
where 
\begin{align*}
P_\mu:=\iint_{\C}\frac{\mu(\zeta)}{\zeta(\zeta-1)}\dA (\zeta),
\quad
Q_\mu:=\iint_{\C}\frac{\mu(\zeta)}{\zeta}\dA (\zeta),
\quad
M_\mu:=\iint_{\C}\mu(\zeta)\dA (\zeta).
\end{align*}
\end{lemma}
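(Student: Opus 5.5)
The plan is to expand the kernel of \eqref{eqn-psi-variation-integral} in powers of $1/z$ for $|z|$ much larger than the support of $\mu$, and then integrate term by term. Suppose $\operatorname{supp}\mu\subset\{|\zeta|\le R\}$ and $|z|\ge 2R$. For every $\zeta$ in the support we have $|\zeta/z|\le 1/2$. This gives the geometric expansion
\[
\frac{1}{\zeta-z}=-\frac1z\Bigl(1+\frac{\zeta}{z}+\frac{\zeta^2}{z^2}+\frac{\zeta^3}{z^3}\cdot\frac{1}{1-\zeta/z}\Bigr),
\]
and the remainder factor is bounded by $2$. Substituting this into \eqref{eqn-psi-variation-integral} yields
\[
V_\mu(z)=\frac{z-1}{\pi}\Bigl(P_\mu+\frac{1}{z}\iint\frac{\mu(\zeta)}{\zeta-1}\dA(\zeta)+\frac{1}{z^2}\iint\frac{\mu(\zeta)\,\zeta}{\zeta-1}\dA(\zeta)+O(z^{-3})\Bigr).
\]
Here I use $\zeta/(\zeta(\zeta-1))=1/(\zeta-1)$ and $\zeta^2/(\zeta(\zeta-1))=\zeta/(\zeta-1)$. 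All of these integrals converge absolutely, because $1/|\zeta(\zeta-1)|$ is locally integrable in the plane (its singularities are only of order $1/|\zeta|$) and $\mu$ is bounded with compact support. The $O(z^{-3})$ remainder is uniform: it is bounded by $2|z|^{-3}\iint|\mu||\zeta|^2/|\zeta-1|\,\dA$.

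Next I multiply out by $(z-1)$ and collect powers of $z$:
\[
\pi V_\mu(z)=P_\mu z+\Bigl(\iint\frac{\mu}{\zeta-1}-P_\mu\Bigr)+\frac1z\Bigl(\iint\frac{\mu\,\zeta}{\zeta-1}-\iint\frac{\mu}{\zeta-1}\Bigr)+O(z^{-2}).
\]
The partial-fraction identity
\[
\frac{1}{\zeta-1}-\frac{1}{\zeta(\zeta-1)}=\frac{1}{\zeta}
\]
turns the constant term into $Q_\mu$. Likewise, $\frac{\zeta}{\zeta-1}-\frac{1}{\zeta-1}=1$ turns the $1/z$ coefficient into $M_\mu$. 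This gives \eqref{eqn-AB-Laurent-expansion}.

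The main point needing care is justifying the termwise integration and the uniformity of the error. Both follow from absolute convergence and the uniform bound $|1-\zeta/z|^{-1}\le2$ on the support, together with the fact that $Q_\mu$ is finite, since $1/|\zeta|$ is locally integrable. The symmetric extension plays no role here beyond $\mu\in L^\infty$ with compact support.
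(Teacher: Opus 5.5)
Your proof is correct and is essentially the paper's argument. Both rest on expanding $1/(\zeta-z)$ in powers of $1/z$ uniformly for $\zeta$ in the compact support, combined with partial fractions, and differ only in order of operations: the paper first applies the exact identity $-\frac{z(z-1)}{\zeta(\zeta-1)(\zeta-z)}=\frac{z}{\zeta(\zeta-1)}+\frac1\zeta-\frac1{\zeta-z}$, so only the Cauchy term needs a one-term expansion, whereas you expand to third order and then identify $Q_\mu$ and $M_\mu$ by partial fractions.
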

\begin{proof}
Using
\[
-\frac{z(z-1)}{\zeta(\zeta-1)(\zeta-z)}
=
\frac{z}{\zeta(\zeta-1)}
+\frac1\zeta
-\frac1{\zeta-z},
\]
we have
\[
V_\mu(z)
=
\frac{P_\mu}{\pi}z
+\frac{Q_\mu}{\pi}
-\frac1\pi\iint_{\C}
\frac{\mu(\zeta)}{\zeta-z}\,dA(\zeta).
\]
Since $\mu$ has compact support,
\[
-\frac1{\zeta-z}
=
\frac1z+O(z^{-2}),
\qquad z\to\infty,
\]
uniformly for $\zeta\in\operatorname{supp}\mu$. Therefore
\[
-\frac1\pi\iint_{\C}
\frac{\mu(\zeta)}{\zeta-z}\,dA(\zeta)
=
\frac{M_\mu}{\pi z}+O(z^{-2}),
\]
which gives \eqref{eqn-AB-Laurent-expansion}.
\end{proof}

\subsection{An auxiliary conformal mapping}
Fix $\review{0\le t<T_+}$, write $G_t=g_t^{-1}$, and define
\begin{align}\label{eqn-nu-t-normal}
\nu_t(w)
=
\nu(G_t(w))
\frac{\overline{G_t'(w)}}{G_t'(w)},
\qquad w\in\HH .
\end{align}
Notice that $\nu_t$ only depends on  the restriction of $\nu$ to $D_t$.  Extend
$\nu_t$ symmetrically to $\C$, and let $\Psi_t^\eps$ be the normalized
quasiconformal map with Beltrami coefficient $\eps\nu_t$, fixing
$0,1,\infty$. Similar to the previous case, $\Psi_t^\eps$ preserves $\R$ and $\HH$.
\begin{lemma}\label{lemma-standard-Stoilow}
There exists  a conformal mapping
\[
H_t^\eps\colon
\HH\setminus K_{t,\mathrm{std}}^{\eps\nu}\longrightarrow\HH
\]
such that
\begin{align}\label{eqn-H-factorization-normal}
\Psi_t^\eps
=
H_t^\eps\circ\psi_{\mathrm{std}}^{\eps\nu}\circ G_t,
\end{align}
or equivalently
\[
H_t^\eps
=
\Psi_t^\eps\circ g_t\circ
(\psi_{\mathrm{std}}^{\eps\nu})^{-1}.
\]
It has a Laurent expansion
\begin{align}\label{eqn-H-Laurent-normal}
H_t^\eps(z)
=
B_{-1}(\eps)z+B_0(\eps)
+\frac{B_1(\eps)}{z}+O(z^{-2}),
\end{align}
where $B_{-1}(\eps)>0$ and $B_0(\eps)\in\R$.  Moreover,
\begin{align}
g_{t,\mathrm{std}}^{\eps\nu}(z)
&=
\frac{H_t^\eps(z)-B_0(\eps)}{B_{-1}(\eps)},
\label{eqn-g-epsilon-H-normal}\\
a_{t,\mathrm{std}}^{\eps\nu}
&=
\frac{B_1(\eps)}{2B_{-1}(\eps)},
\label{eqn-capacity-B-normal}\\
\lambda_{t,\mathrm{std}}^{\eps\nu}
&=
\frac{\Psi_t^\eps(\lambda_t)-B_0(\eps)}{B_{-1}(\eps)}.
\label{eqn-driving-Psi-normal}
\end{align}
\end{lemma}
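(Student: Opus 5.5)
Define $H_t^\eps:=\Psi_t^\eps\circ g_t\circ(\psi_{\mathrm{std}}^{\eps\nu})^{-1}$ on $\HH\setminus K_{t,\mathrm{std}}^{\eps\nu}$. It is a homeomorphism onto $\HH$, because each factor is a homeomorphism between the indicated domains: $(\psi_{\mathrm{std}}^{\eps\nu})^{-1}$ maps the slit domain onto $D_t$, $g_t$ maps $D_t$ onto $\HH$, and $\Psi_t^\eps$ preserves $\HH$. Conformality then follows from the chain rule for Beltrami coefficients. Since $g_t$ is conformal, the pullback of $\eps\nu_t$ under $g_t$ is $\eps\nu_t(g_t(z))\,\overline{g_t'(z)}/g_t'(z)$. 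By \eqref{eqn-nu-t-normal} with $w=g_t(z)$ and $G_t'(w)=1/g_t'(z)$, this equals $\eps\nu(z)$ on $D_t$. Hence $\Psi_t^\eps\circ g_t$ and $\psi_{\mathrm{std}}^{\eps\nu}$ have the same Beltrami coefficient a.e. on $D_t$. The composition $H_t^\eps$ is therefore $1$-quasiconformal, hence conformal by Weyl's lemma. Only the values of $\nu$ on $D_t$ enter, so positive area of $K_t^\circ$ causes no difficulty. This proves the factorization \eqref{eqn-H-factorization-normal}.

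For the expansion at infinity, use the Schwarz reflection principle. $H_t^\eps$ extends continuously to $\R$ away from a compact set, mapping real points near $\infty$ to real points near $\infty$: $\psi_{\mathrm{std}}^{\eps\nu}$ and $\Psi_t^\eps$ fix $\infty$ and preserve $\R$, and $g_t$ maps $\R$ near $\infty$ to $\R$ near $\infty$. Reflection therefore extends $H_t^\eps$ conformally to a punctured neighbourhood of $\infty$ with $H_t^\eps(\infty)=\infty$. It is injective there, so $\infty$ is a simple pole, and the Laurent series \eqref{eqn-H-Laurent-normal} has real coefficients. Orientation preservation of $\HH$ forces $B_{-1}(\eps)>0$.

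The remaining identities follow from uniqueness of the hydrodynamic map. The map $(H_t^\eps-B_0)/B_{-1}$ is conformal from $\HH\setminus K_{t,\mathrm{std}}^{\eps\nu}$ onto $\HH$, since real affine maps with positive slope preserve $\HH$. It has expansion $z+B_1/(B_{-1}z)+O(z^{-2})$. By uniqueness it equals $g_{t,\mathrm{std}}^{\eps\nu}$, which gives \eqref{eqn-g-epsilon-H-normal}, and comparing coefficients gives \eqref{eqn-capacity-B-normal}.

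For \eqref{eqn-driving-Psi-normal}, evaluate \eqref{eqn-H-factorization-normal} at the tip. The main subtlety is that $G_t(\lambda_t)=\gamma(t)$ and $H_t^\eps(\psi_{\mathrm{std}}^{\eps\nu}(\gamma(t)))$ must be understood via prime ends. Take a sequence $w_k\to\lambda_t$ in $\HH$. Then $G_t(w_k)$ converges to the tip prime end of $D_t$. Since $\psi_{\mathrm{std}}^{\eps\nu}$ is a homeomorphism of $\overline\HH$, it carries this sequence to the tip prime end of the image domain. Applying $H_t^\eps$ gives $\Psi_t^\eps(w_k)\to\Psi_t^\eps(\lambda_t)$ by continuity of $\Psi_t^\eps$. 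Thus the prime-end value of $H_t^\eps$ at the deformed tip is $\Psi_t^\eps(\lambda_t)$, and dividing as in \eqref{eqn-g-epsilon-H-normal} yields \eqref{eqn-driving-Psi-normal}.
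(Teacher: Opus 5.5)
Your proposal is correct and follows the same route as the paper: matching Beltrami coefficients to get the Stoilow factorization, Schwarz reflection at $\infty$ for the Laurent expansion with $B_{-1}(\eps)>0$ and $B_0(\eps)\in\R$, uniqueness of the hydrodynamic map, and a boundary limit at the tip. Where the paper just cites Stoilow and takes the boundary limit, you spell out the chain-rule computation (equal coefficients on $D_t$, then Weyl's lemma) and the prime-end correspondence.
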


\begin{proof}
The maps $\psi_{\mathrm{std}}^{\eps\nu}\circ G_t$ and $
\Psi_t^\eps$
have the same Beltrami coefficient $\eps\nu_t$.  The Stoilow
factorization theorem therefore gives
\eqref{eqn-H-factorization-normal}.  All mappings  fix infinity, so
$H_t^\eps(\infty)=\infty$.  Schwarz reflection  extends $H_t^\eps$ conformally  to a neighborhood of infinity, which gives 
\eqref{eqn-H-Laurent-normal}.  By symmetry  and preservation of the upper half-plane, we have $B_0(\varepsilon)\in\mathbb{R}$  and 
$B_{-1}(\eps)>0$. Composing with the  real affine map
\[
I_\eps(w)=\frac{w-B_0(\eps)}{B_{-1}(\eps)}
\]
gives
\eqref{eqn-g-epsilon-H-normal}. Comparison of the  coefficient of $z^{-1}$
 gives \eqref{eqn-capacity-B-normal}.  Finally, taking the
boundary limit as $w\to\lambda_t$ from $\HH$ in
\eqref{eqn-H-factorization-normal} gives
\[
H_t^\eps\bigl(\psi_{\mathrm{std}}^{\eps\nu}(\gamma(t))\bigr)
=
\Psi_t^\eps(\lambda_t).
\]
Applying $I_\eps$ gives  \eqref{eqn-driving-Psi-normal}.
\end{proof}
Recall the symmetric extension in \eqref{eqn-symmetric-extension}, and
write
\begin{align}\label{eqn-vectorfield}
V:=V_{\widehat\nu},
\qquad
V_t:=V_{\nu_t}.
\end{align}
Define, for $z\in D_t$,
\begin{equation}\label{eqn-Wt-definition}
W_t(z)
:=
V_t(g_t(z))-g_t'(z)V(z).
\end{equation}
By symmetry,
$V(\bar z)=\overline{V(z)}$ and
$V_t(\bar z)=\overline{V_t(z)}$; hence, whenever $g_t$ is extended
across the real axis by Schwarz reflection, $W_t$ has the reflected
extension
\[
W_t(\bar z)=\overline{W_t(z)}.
\]
We continue to denote this extension by $W_t$.
\begin{lemma}\label{lemma-common-annulus}
There exist $\rho>0$, $\delta>0$, and $\eps_0>0$ such that, for
\[
A_{\rho,\delta}
:=
\{z\in\C:\rho-\delta<|z|<\rho+\delta\},
\]
the maps $g_t$ and $H_t^\eps$ admit holomorphic extensions to
an open neighborhood of $\overline{A_{\rho,\delta}}$
whenever $|\eps|<\eps_0$.  Moreover,
\begin{equation}\label{eqn-H-first-variation-annulus}
\lim_{\eps\to0}
\sup_{z\in \overline{A_{\rho,\delta}}}
\left|
\frac{H_t^\eps(z)-g_t(z)}{\eps}
-W_t(z)
\right|
=0.
\end{equation}
For every compact set $E\subset D_t$, one also has
\begin{equation}\label{eqn-H-first-variation-local}
\lim_{\eps\to0}
\sup_{z\in E}
\left|
\frac{H_t^\eps(z)-g_t(z)}{\eps}
-W_t(z)
\right|
=0.
\end{equation}
Therefore, if
\[
H_t^\eps(z)
=
B_{-1}(\eps)z+B_0(\eps)
+\frac{B_1(\eps)}{z}+O(z^{-2})
\]
is the Laurent expansion at infinity, then
$B_{-1},B_0,B_1$ are differentiable at $\eps=0$, and
\begin{equation}\label{eqn-B-derivative-contour}
\dot B_j(0)
=
\frac{1}{2\pi i}
\int_{|z|=\rho}
W_t(z)z^{j-1}\,dz,
\qquad j=-1,0,1.
\end{equation}
Consequently,
$a_{t,\mathrm{std}}^{\eps\nu}$ and
$\lambda_{t,\mathrm{std}}^{\eps\nu}$ are differentiable at
$\eps=0$.
\end{lemma}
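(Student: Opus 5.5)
Choose $\rho$ so large that $K_t\subset\{|z|<\rho/2\}$ and put $\delta=\rho/4$. Since $\psi_{\mathrm{std}}^{\eps\nu}\to\mathrm{id}$ locally uniformly, for $|\eps|<\eps_0$ the deformed hull $K_{t,\mathrm{std}}^{\eps\nu}$ also lies in $\{|z|<\rho/2\}$. Thus $H_t^\eps$ is defined on $\HH\cap\{|z|>\rho/2\}$. Its boundary values on $\R\cap\{|x|>\rho/2\}$ are real, because $\Psi_t^\eps$, $g_t$ and $(\psi_{\mathrm{std}}^{\eps\nu})^{-1}$ all preserve $\R$ there. Schwarz reflection therefore extends $H_t^\eps$ and $g_t$ holomorphically to $\{|z|>\rho/2\}$, which is a neighborhood of $\overline{A_{\rho,\delta}}$.

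The variation is computed from the factorization $H_t^\eps=\Psi_t^\eps\circ g_t\circ(\psi_{\mathrm{std}}^{\eps\nu})^{-1}$. Fix a compact set $E\subset D_t$. By Lemma~\ref{lemma-inverse-variation} applied to $\mu=\widehat\nu$,
\[
(\psi_{\mathrm{std}}^{\eps\nu})^{-1}(z)=z-\eps V(z)+o(\eps)
\]
uniformly on $E$, and these points stay in a slightly larger compact subset of $D_t$. On that set $g_t$ is holomorphic with locally bounded $g_t''$, so a Taylor expansion gives
\[
g_t\bigl((\psi_{\mathrm{std}}^{\eps\nu})^{-1}(z)\bigr)=g_t(z)-\eps g_t'(z)V(z)+o(\eps)
\]
uniformly on $E$. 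Next apply \eqref{eqn-AB-normalized-expansion} for $\mu=\nu_t$, uniformly on the compact set $g_t(E')$. Continuity of $V_t$ then gives
\[
H_t^\eps(z)=g_t(z)+\eps\bigl(V_t(g_t(z))-g_t'(z)V(z)\bigr)+o(\eps),
\]
which is \eqref{eqn-H-first-variation-local}. The same argument works on the compact set $\overline{A_{\rho,\delta}}\cap\overline{\HH}$, using the continuous extension of $g_t$ to the real boundary there. The reflection symmetry $W_t(\bar z)=\overline{W_t(z)}$ and $H_t^\eps(\bar z)=\overline{H_t^\eps(z)}$ then carries the bound to the lower half of the annulus, giving \eqref{eqn-H-first-variation-annulus}.

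The main obstacle is uniformity up to the real axis inside the annulus. There $g_t$ is only the reflected extension, so the Taylor step needs $g_t$ holomorphic on a neighborhood of the relevant points, including those just below $\R$ that are images under $(\psi_{\mathrm{std}}^{\eps\nu})^{-1}$. This is why I work on the full disk-complement $\{|z|>\rho/2\}$. Both $\psi_{\mathrm{std}}^{\eps\nu}$ and the reflected $g_t$ are defined on all of $\C$ there, with uniform expansions on compacts, because Ahlfors--Bers holds locally uniformly on $\C$. So I apply the expansion on the full annulus directly, without reflecting by hand. The only additional check is that $W_t$ is continuous on the annulus, which follows from the continuity of $V$ and $V_t$.

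Finally, the Laurent coefficients are
\[
B_j(\eps)=\frac1{2\pi i}\int_{|z|=\rho}H_t^\eps(z)z^{j-1}\,dz,\qquad B_j(0)=\frac1{2\pi i}\int_{|z|=\rho}g_t(z)z^{j-1}\,dz.
\]
The uniform convergence \eqref{eqn-H-first-variation-annulus} on the circle lets us pass the difference quotient under the integral, which yields \eqref{eqn-B-derivative-contour}. Differentiability of $a_{t,\mathrm{std}}^{\eps\nu}$ then follows from \eqref{eqn-capacity-B-normal}, since $B_{-1}(0)=1$. Differentiability of $\lambda_{t,\mathrm{std}}^{\eps\nu}$ follows from \eqref{eqn-driving-Psi-normal}, because $\Psi_t^\eps(\lambda_t)=\lambda_t+\eps V_t(\lambda_t)+o(\eps)$ by \eqref{eqn-AB-normalized-expansion}.
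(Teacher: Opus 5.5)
Your proposal is correct and follows essentially the same route as the paper's proof. It takes a large annulus avoiding both $K_t$ and $K_{t,\mathrm{std}}^{\eps\nu}$ with Schwarz reflection, then the inverse Ahlfors--Bers expansion combined with a Taylor expansion of the reflected $g_t$, then the Ahlfors--Bers expansion of $\Psi_t^\eps$ together with continuity of $V_t$, and finally Cauchy's formula for the Laurent coefficients; your decision to apply the expansion directly on the full annulus, rather than on the upper half-annulus followed by reflection, is exactly what the paper does.
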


\begin{proof}
Choose $R>0$ such that
\[
K_t\subset\{|z|<R\}.
\]
Since $\psi_{\mathrm{std}}^{\eps\nu}\longrightarrow\operatorname{id}$
uniformly on $K_t$, after decreasing $\eps_0$ if necessary, we have 
\[
K_{t,\mathrm{std}}^{\eps\nu}
\subset\{|z|<R\},
\qquad |\eps|<\eps_0.
\]
Choose $\rho>R$ and $\delta>0$ so that $\rho-\delta>R$.
Thus the annulus $A_{\rho,\delta}$ does not intersect with  either $K_t$ or
$K_{t,\mathrm{std}}^{\eps\nu}$ for $|\eps|<\eps_0$.
Schwarz reflection gives holomorphic
extensions of both maps to $A_{\rho,\delta}$.
In fact both maps extend to the common exterior region
$\{|z|>R\}$, which contains an open neighborhood of
$\overline{A_{\rho,\delta}}$.

We next prove the first-order expansion.  By
Lemma~\ref{lemma-inverse-variation},
\begin{equation}\label{eqn-inverse-use-common-annulus}
(\psi_{\mathrm{std}}^{\eps\nu})^{-1}(z)
=
z-\eps V(z)+o(\eps)
\end{equation}
uniformly on $\overline{A_{\rho,\delta}}$.
Since $g_t$ is holomorphic on a slightly larger annulus, Taylor's
formula gives
\begin{align}
g_t\!\left(
(\psi_{\mathrm{std}}^{\eps\nu})^{-1}(z)
\right)
&=
g_t(z)-\eps g_t'(z)V(z)+o(\eps)\qquad \text{uniformly on $\overline{A_{\rho,\delta}}$.}
\label{eqn-gt-composition-variation}
\end{align}
Recall that $H_t^\eps
=
\Psi_t^\eps
\circ g_t
\circ
(\psi_{\mathrm{std}}^{\eps\nu})^{-1}$.
The Ahlfors--Bers expansion for $\Psi_t^\eps$ gives
\[
\Psi_t^\eps(w)
=
w+\eps V_t(w)+o(\eps)
\]
uniformly on the compact sets under consideration.
Put
$w_\eps(z)=g_t((\psi_{\mathrm{std}}^{\eps\nu})^{-1}(z))$.
These points stay in a single compact set and converge uniformly to
$g_t(z)$ on the closed annulus. Uniform continuity of $V_t$ on that
compact set therefore gives
$V_t(w_\eps(z))=V_t(g_t(z))+o(1)$ uniformly; no differentiability of
$V_t$ is required.
Combining this with \eqref{eqn-gt-composition-variation}, we obtain
\begin{align*}
H_t^\eps(z)
=
g_t(z)
-\eps g_t'(z)V(z)
+\eps V_t(g_t(z))
+o(\eps)=
g_t(z)+\eps W_t(z)+o(\eps),\qquad \text{uniformly on $\overline{A_{\rho,\delta}}$.  }
\end{align*}
This proves
\eqref{eqn-H-first-variation-annulus}.  The same argument, with
$\overline{A_{\rho,\delta}}$ replaced by an arbitrary compact set
$E\subset D_t$, proves \eqref{eqn-H-first-variation-local}.

For $j=-1,0,1$, Cauchy's formula gives
\[
B_j(\eps)
=
\frac{1}{2\pi i}
\int_{|z|=\rho}
H_t^\eps(z)z^{j-1}\,dz.
\]
Using \eqref{eqn-H-first-variation-annulus} in this identity gives
\[
\frac{B_j(\eps)-B_j(0)}{\eps}
\longrightarrow
\frac{1}{2\pi i}
\int_{|z|=\rho}
W_t(z)z^{j-1}\,dz,
\]
which proves \eqref{eqn-B-derivative-contour}.

Finally, the Ahlfors--Bers expansion shows that
$\Psi_t^\eps(\lambda_t)$ is differentiable at $\eps=0$.
Together with the differentiability of $B_{-1},B_0,B_1$,
\eqref{eqn-capacity-B-normal} and \eqref{eqn-driving-Psi-normal}
therefore imply that $a_{t,\mathrm{std}}^{\eps\nu}$ and
$\lambda_{t,\mathrm{std}}^{\eps\nu}$ are differentiable at $\eps=0$.
\end{proof}

\subsection{Proof of Theorem~\ref{thm-main-normal} for compactly  supported   Beltrami coefficients}\label{sec-bounded}
In this section we always 
assume  that $\widehat\nu$ has compact support.  Then
$\nu_t$ also has compact support.  Write
\[
P=P_{\widehat\nu},\quad Q=Q_{\widehat\nu},\quad M=M_{\widehat\nu},
\qquad
P_t=P_{\nu_t},\quad Q_t=Q_{\nu_t},\quad M_t=M_{\nu_t}.
\]
Using
\[
g_t(z)=z+\frac{2t}{z}+O(z^{-2}),
\qquad
g_t'(z)=1-\frac{2t}{z^2}+O(z^{-3}),
\]
Lemma~\ref{lemma-AB-Laurent-coefficients} gives
\begin{align*}
V_t(g_t(z))
&=
\frac{P_t}{\pi}z+\frac{Q_t}{\pi}
+\frac{2tP_t+M_t}{\pi z}+O(z^{-2}),
\\
g_t'(z)V(z)
&=
\frac{P}{\pi}z+\frac{Q}{\pi}
+\frac{M-2tP}{\pi z}+O(z^{-2}).
\end{align*}
Since $\widehat\nu$ and $\nu_t$ have compact support, the corresponding
variation fields $V$ and $V_t$ are holomorphic in a neighborhood of
infinity. Hence
\[
W_t(z):=V_t(g_t(z))-g_t'(z)V(z)
\]
is also holomorphic in a neighborhood of infinity. Therefore, the
preceding expansions give the Laurent expansion
\[
W_t(z)
=
\frac{P_t-P}{\pi}z
+\frac{Q_t-Q}{\pi}
+\frac{2t(P_t+P)+M_t-M}{\pi z}
+O(z^{-2}).
\]
By Lemma~\ref{lemma-common-annulus}, the coefficients of this Laurent
expansion are precisely the first variations of the corresponding
Laurent coefficients of $H_t^\varepsilon$. Thus
\begin{align}\label{eqn-dot-Bminusone-compact}
\dot B_{-1}(0)&=\frac{P_t-P}{\pi},\quad
\dot B_0(0)=\frac{Q_t-Q}{\pi},\quad
\dot B_1(0)=\frac{2t(P_t+P)+M_t-M}{\pi}.
\end{align}
Differentiating \eqref{eqn-driving-Psi-normal} gives
\[
\dot\lambda_{t,\mathrm{std}}
=
V_t(\lambda_t)-\dot B_0(0)-\lambda_t\dot B_{-1}(0).
\]
The partial-fraction identity used in
Lemma~\ref{lemma-AB-Laurent-coefficients}, now evaluated at
$\lambda_t$, and
\eqref{eqn-dot-Bminusone-compact} gives
\begin{align}\label{eqn-driving-before-domain-change}
\dot\lambda_{t,\mathrm{std}}
=
-\frac1\pi
\iint_{\C}
\frac{\nu_t(w)}{w-\lambda_t}\dA (w)
+\frac1\pi
\iint_{\C}\frac{\widehat\nu(z)}{z}\dA (z)
+
\frac{\lambda_t}{\pi}
\iint_{\C}
\frac{\widehat\nu(z)}{z(z-1)}\dA (z).
\end{align}
All three integrals are absolutely convergent in the bounded-support
case.
No exclusion of $\lambda_t=0$ or $\lambda_t=1$ is needed here.
Indeed, for $x\in\R$ the partial-fraction identity can be written as
\[
V_t(x)=-\frac1\pi\iint_{\C}\frac{\nu_t(w)}{w-x}\dA(w)
       +\frac{Q_t}{\pi}+\frac{xP_t}{\pi}.
\]
The Cauchy integral on the right is continuous in $x$ for a bounded
compactly supported coefficient, so the identity extends to $x=0,1$.
Substitution into the preceding expression for
$\dot\lambda_{t,\mathrm{std}}$ gives the same formula at those values.
From \eqref{eqn-nu-t-normal}, with $w=g_t(z)$,
\[
\nu_t(g_t(z))
=
\nu(z)\frac{g_t'(z)}{\overline{g_t'(z)}},
\qquad
dA(g_t(z))=|g_t'(z)|^2dA(z).
\]
Consequently,
\begin{align*}
\iint_{\HH}
\frac{\nu_t(w)}{w-\lambda_t}\dA (w)
=
\iint_{D_t}
\nu(z)\frac{g_t'(z)^2}{g_t(z)-\lambda_t}\dA (z).
\end{align*}
Using symmetry in \eqref{eqn-driving-before-domain-change} and $\HH=D_t\cup K_t^\circ$ gives
\begin{align*}
\dot\lambda_{t,\mathrm{std}}
&=
-\frac{2}{\pi}\operatorname{Re}
\iint_{D_t}
\nu(z)\frac{g_t'(z)^2}{g_t(z)-\lambda_t}\dA(z)
+\frac{2}{\pi}\operatorname{Re}
\iint_{\HH}
\nu(z)\left(
\frac1z+\frac{\lambda_t}{z(z-1)}
\right)\dA(z)
\\
&=
-\frac{2}{\pi}\operatorname{Re}
\iint_{D_t}
\nu(z)\left(
\frac{g_t'(z)^2}{g_t(z)-\lambda_t}
-\frac1z-\frac{\lambda_t}{z(z-1)}
\right)\dA(z)
\\
&\quad
+\frac{2}{\pi}\operatorname{Re}
\iint_{K_t^\circ}
\nu(z)\left(
\frac1z+\frac{\lambda_t}{z(z-1)}
\right)\dA(z),
\end{align*}
which proves \eqref{eqn-main-norm-1} for $\nu$  that has compact support.

Similarly, differentiating \eqref{eqn-capacity-B-normal} and using
\eqref{eqn-dot-Bminusone-compact}
gives
\begin{align*}
2\dot a_{t,\mathrm{std}}
=
\frac{4t}{\pi}
\iint_{\C}
\frac{\widehat\nu(z)}{z(z-1)}\dA (z)
+
\frac1\pi\iint_{\C}\nu_t(w)\dA (w)
-\frac1\pi\iint_{\C}\widehat\nu(z)\dA (z).
\end{align*}
A change of variable  gives
\begin{align*}
\iint_{\HH}\nu_t(w)\dA (w)
=
\iint_{D_t}\nu(z)g_t'(z)^2\dA (z).
\end{align*}
Taking upper-half-plane real parts, dividing by $2$, and using
$\HH=D_t\cup K_t^\circ$ yields
\begin{align*}
\dot a_{t,\mathrm{std}}
&=
\frac1\pi\operatorname{Re}
\iint_{D_t}
\nu(z)\left(
g_t'(z)^2-1+\frac{4t}{z(z-1)}
\right)\dA(z)
+\frac1\pi\operatorname{Re}
\iint_{K_t^\circ}
\nu(z)\left(
-1+\frac{4t}{z(z-1)}
\right)\dA(z).
\end{align*}
This proves \eqref{eqn-main-norm-2} when $\nu$ has compact support.

\subsection{Proof of Theorem~\ref{thm-main-normal} for arbitrary   Beltrami coefficients}

For use in the estimates below, we introduce the following notation which appeared in \eqref{eqn-main-norm-1} and \eqref{eqn-main-norm-2} in Theorem~\ref{thm-main-normal}:
for $z\in D_t=\mathbb{H}\setminus\gamma[0,t]$, set
\begin{align}\label{eqn-ABC_C}
A_t(z):=\frac{g_t'(z)^2}{g_t(z)-\lambda_t},\quad
B_t(z):=\frac1z+\frac{\lambda_t}{z(z-1)},\quad
C_t(z):=-1+\frac{4t}{z(z-1)}.
\end{align}
Then we have 
\begin{lemma}\label{lemma-standard-kernel-integrability}
The functions in \eqref{eqn-ABC_C} satisfy
\[
A_t-B_t\in L^1(D_t),\qquad
g_t'^2+C_t\in L^1(D_t),
\]
and
\[
B_t|_{K_t^\circ}\in L^1(K_t^\circ),\qquad
C_t|_{K_t^\circ}\in L^1(K_t^\circ).
\]
\end{lemma}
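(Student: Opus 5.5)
The plan is to split each kernel into a bounded part, where the only issues are local singularities, and a neighbourhood of infinity, where integrability comes from cancellation in the Laurent expansions. Fix $R>0$ with $K_t\cup\{0,1,\lambda_t\}\subset\{|z|<R/2\}$ and write $D_t=D_t^{<}\cup D_t^{>}$ with $D_t^{<}=D_t\cap\{|z|<R\}$ and $D_t^{>}=D_t\cap\{|z|\ge R\}=\HH\cap\{|z|\ge R\}$. On $D_t^{>}$, $g_t$ extends by Schwarz reflection to be holomorphic near $\infty$.

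\emph{Behaviour at infinity.} From $g_t(z)=z+2t/z+O(z^{-2})$ and $g_t'(z)=1-2t/z^2+O(z^{-3})$ I expand
\[
A_t(z)=\frac{1-4t/z^2+O(z^{-3})}{z\bigl(1-\lambda_t/z+O(z^{-2})\bigr)}=\frac1z+\frac{\lambda_t}{z^2}+O(z^{-3}),
\]
and also
\[
B_t(z)=\frac1z+\frac{\lambda_t}{z^2}+O(z^{-3}).
\]
Hence $A_t-B_t=O(|z|^{-3})$ on $D_t^{>}$. Similarly, $g_t'(z)^2=1-4t/z^2+O(z^{-3})$ and $C_t(z)=-1+4t/z^2+O(z^{-3})$, so $g_t'^2+C_t=O(|z|^{-3})$ there. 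Since $|z|^{-3}$ is integrable on $\{|z|\ge R\}$, both functions lie in $L^1(D_t^{>})$.

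\emph{Bounded part.} On $D_t^{<}$ I estimate each term separately. The terms $B_t$ and $C_t$ have only simple poles at $0$ and $1$, so they are bounded by $c(1+|z|^{-1}+|z-1|^{-1})$ and are locally integrable in the plane. This also settles $B_t|_{K_t^\circ},C_t|_{K_t^\circ}\in L^1(K_t^\circ)$, because $K_t^\circ$ is bounded.

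For $A_t$ and $g_t'^2$ the key observation is that the change of variables $w=g_t(z)$, already used in \S\ref{sec-bounded}, turns them into elementary integrals. Since $|A_t|=|g_t'|^2/|g_t-\lambda_t|$ and $dA(w)=|g_t'(z)|^2dA(z)$, we get
\[
\iint_{D_t^{<}}|A_t|\,dA=\iint_{g_t(D_t^{<})}\frac{dA(w)}{|w-\lambda_t|},
\qquad
\iint_{D_t^{<}}|g_t'|^2\,dA=\Area\bigl(g_t(D_t^{<})\bigr).
\]
Both right-hand sides are finite as soon as $g_t(D_t^{<})$ is bounded, since $1/|w-\lambda_t|$ is locally integrable in $\R^2$.

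\emph{Main obstacle.} The only step needing care is the boundedness of $g_t(D_t^{<})$, since $g_t$ is singular along the slit and near the tip. I would obtain it from the standard fact that $g_t(z)-z$ is bounded on $D_t$, namely $|g_t(z)-z|\le 3\,\mathrm{rad}(K_t)$. Alternatively, it follows from the maximum principle applied to $g_t(z)-z$, which is bounded near $\infty$ and has bounded imaginary part with continuous boundary behaviour on prime ends. Either route gives $g_t(D_t^{<})\subset\{|w|\le R+C\}$ and completes the proof. Note that this argument needs no regularity of $\gamma$ and is unaffected by whether $K_t^\circ$ has positive area, because the trace is excluded from $D_t$ and the integrals over $K_t^\circ$ involve only the explicit rational kernels.
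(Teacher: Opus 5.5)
Your proof is correct and follows the paper's argument. The steps match: hydrodynamic expansions give $O(|z|^{-3})$ cancellation at infinity, the area formula through $w=g_t(z)$ gives local integrability of $A_t$ and $g_t'^2$, and the rational kernels have only simple poles. The one cosmetic difference is how boundedness of $g_t$ on bounded sets is obtained: the paper reads it off directly from the expansion $G_t(w)=w-2t/w+O(w^{-2})$ of the inverse map, instead of quoting the estimate $|g_t(z)-z|\le 3\,\mathrm{rad}(K_t)$.
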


\begin{proof}
The hydrodynamic expansion gives
\[
A_t(z)
=
\frac1z+\frac{\lambda_t}{z^2}+O(|z|^{-3}),
\qquad
B_t(z)
=
\frac1z+\frac{\lambda_t}{z^2}+O(|z|^{-3}),
\]
and
\[
g_t'(z)^2-1=-\frac{4t}{z^2}+O(|z|^{-3}),
\qquad
\frac{4t}{z(z-1)}
=
\frac{4t}{z^2}+O(|z|^{-3}).
\]
Hence both domain kernels are $O(|z|^{-3})$ at infinity.

For local integrability, let $U\subset\HH$ be a bounded domain.  The
set $g_t(U\cap D_t)$ is bounded, because
$G_t(w)=g_t^{-1}(w)=w-2t/w+O(w^{-2})$ at infinity.  The area formula gives
\[
\iint_{U\cap D_t}|A_t(z)|\dA (z)
=
\iint_{g_t(U\cap D_t)}
\frac{dA(w)}{|w-\lambda_t|}<\infty
\]
and
\[
\iint_{U\cap D_t}|g_t'(z)|^2\dA (z)
=
\Area\bigl(g_t(U\cap D_t)\bigr)<\infty .
\]
\review{The rational terms $B_t$ and $C_t$ are also locally
area-integrable on $\HH$: their possible finite poles at $0$ and $1$
are simple. Finally, the only possible boundary singularity} of $B_t$ and $C_t$ on the closure of  $K_t^\circ$ is bounded by
a constant multiple of $|z|^{-1}$ near $0$.  This is locally
integrable, completing the proof.
\end{proof}

\review{For $R>0$, define the truncation:}
\begin{align*}
\widehat\nu_R(z):=\widehat\nu(z)\review{\mathds{1}_{\{|z|<R\}}(z)}.
\end{align*}
%
%
\begin{lemma}\label{lemma-standard-truncation-derivatives}
As $R\to\infty$,
\[
\dot\lambda_{t,\mathrm{std},R}
\longrightarrow
\dot\lambda_{t,\mathrm{std}},
\qquad
\dot a_{t,\mathrm{std},R}
\longrightarrow
\dot a_{t,\mathrm{std}} .
\]
\end{lemma}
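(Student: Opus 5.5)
The plan is to work directly with the representation of the first variations from Lemma~\ref{lemma-common-annulus} and Lemma~\ref{lemma-standard-Stoilow}, which is valid for every bounded $\nu$. Differentiating \eqref{eqn-driving-Psi-normal} and \eqref{eqn-capacity-B-normal} at $\eps=0$, where $B_{-1}(0)=1$, $B_0(0)=0$ and $B_1(0)=2t$, gives
\[
\dot\lambda_{t,\mathrm{std}}=V_t(\lambda_t)-\dot B_0(0)-\lambda_t\dot B_{-1}(0),
\qquad
2\dot a_{t,\mathrm{std}}=\dot B_1(0)-2t\,\dot B_{-1}(0).
\]
Here $\dot B_j(0)$ is the contour integral \eqref{eqn-B-derivative-contour} of $W_t=V_t\circ g_t-g_t'V$ over $|z|=\rho$. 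The same identities hold for the truncation $\widehat\nu_R$, with $V,V_t,W_t$ replaced by
\[
V^R:=V_{\widehat\nu_R},\qquad V_t^R:=V_{(\nu_R)_t},\qquad W_t^R:=V_t^R\circ g_t-g_t'V^R.
\]
The radius $\rho$ and the annulus from Lemma~\ref{lemma-common-annulus} depend only on $K_t$, not on $\nu$, so they can be taken fixed. It therefore suffices to prove two things as $R\to\infty$: that $V^R\to V$ and $V_t^R\to V_t$ uniformly on compact sets, in particular on $g_t(\{|z|=\rho\})$ and $\{|z|=\rho\}$; and that $V_t^R(\lambda_t)\to V_t(\lambda_t)$.

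\textbf{First convergence.} The difference $\widehat\nu-\widehat\nu_R=\widehat\nu\,\mathds{1}_{\{|\zeta|\ge R\}}$ is bounded by $\|\nu\|_\infty$ and supported in $\{|\zeta|\ge R\}$. For $z$ in a fixed compact set $E$ and $R\ge 2\sup_E|z|+2$, the kernel in \eqref{eqn-psi-variation-integral} is bounded by $C_E|\zeta|^{-3}$ on this support. Hence
\[
\sup_{z\in E}|V(z)-V^R(z)|\le C_E\|\nu\|_\infty\iint_{|\zeta|\ge R}|\zeta|^{-3}\dA(\zeta)=O(R^{-1}),
\]
with no need to avoid $0$ or $1$.

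\textbf{Second convergence.} By \eqref{eqn-nu-t-normal}, $(\nu_R)_t=\nu_t\,\mathds{1}_{\{|G_t(w)|<R\}}$. Since $G_t(w)=w+O(1/w)$ and $G_t$ is bounded on bounded sets, the set $\{w\in\HH:|G_t(w)|\ge R\}$ lies in $\{|w|\ge R/2\}$ for large $R$; the same holds for its reflection. The same estimate then gives $V_t^R\to V_t$ uniformly on compact subsets of $\C$. This covers the point $\lambda_t$ (even if $\lambda_t\in\{0,1\}$) and the compact image $g_t(\{|z|=\rho\})$.

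Combining the two convergences, $W_t^R\to W_t$ uniformly on $|z|=\rho$, because $g_t'$ is bounded there. Hence $\dot B_j^R(0)\to\dot B_j(0)$ for $j=-1,0,1$, and the displayed formulas give both limits in the lemma.

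\textbf{Main obstacle.} The step needing care is the uniform control of the tail of the variation kernel for $\nu_t$. This requires locating where the truncation of $\nu$ in the $z$-plane sits after transport to the $w$-plane. It rests on the fact that $g_t$ is a bounded perturbation of the identity near infinity, so that $\{|G_t(w)|\ge R\}\subset\{|w|\ge R/2\}$. Once this containment is established, the rest is the elementary $O(|\zeta|^{-3})$ tail bound.
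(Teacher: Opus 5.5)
Your proposal is correct and follows the paper's proof essentially step for step. The $C_E|\zeta|^{-3}$ tail bound gives $V_{\widehat\nu_R}\to V$ locally uniformly; locating $\operatorname{supp}((\nu_R)_t-\nu_t)$ in $\{|w|\ge R/2\}$ gives $V_{(\nu_R)_t}\to V_t$ the same way; then the contour coefficients $\dot B_j$ and the values at $\lambda_t$ converge, and differentiating \eqref{eqn-capacity-B-normal} and \eqref{eqn-driving-Psi-normal} yields the limits. The only cosmetic difference is how you justify the support containment: you use $G_t(w)=w+O(1/w)$ together with boundedness of $G_t$ on bounded sets, while the paper uses $|g_t(z)|\ge|z|/2$ for large $|z|$. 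Both arguments work.
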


\begin{proof}
Let
\[
V_R:=V_{\widehat\nu_R},\qquad V:=V_{\widehat\nu},
\]
and let $V_{t,R}$ and $V_t$ be the corresponding normalized vector
fields for $(\nu_R)_t$ and $\nu_t$.  Since the Ahlfors--Bers kernel
satisfies
\[
\left|
\frac{z(z-1)}
{\zeta(\zeta-1)(\zeta-z)}
\right|
\le C_E|\zeta|^{-3}
\]
for $z$ in a fixed compact set $E$ and $|\zeta|$ sufficiently large,
\[
V_R\longrightarrow V
\qquad\text{locally uniformly on }\C.
\]

Moreover,
\[
(\nu_R)_t(w)
=
\nu_t(w)\chi_{\{|G_t(w)|<R\}}.
\]
Since
\[
g_t(z)=z+\frac{2t}{z}+O(z^{-2}),
\qquad z\to\infty,
\]
we have $|g_t(z)|\ge |z|/2$ for all sufficiently large $|z|$.
Consequently, for all sufficiently large $R$,
\[
\operatorname{supp}\bigl((\nu_R)_t-\nu_t\bigr)
\subset\{|w|\ge R/2\}.
\]
The same kernel estimate therefore gives
\[
V_{t,R}\longrightarrow V_t
\qquad\text{locally uniformly on }\C.
\]

By Lemma~\ref{lemma-common-annulus},
\[
\dot H_{t,R}(z)
=
V_{t,R}(g_t(z))-g_t'(z)V_R(z),
\]
and hence $\dot H_{t,R}\to\dot H_t$ uniformly on the fixed circle
$|z|=\rho$.  It follows from
\eqref{eqn-B-derivative-contour} that
\[
\dot B_{j,R}(0)\longrightarrow\dot B_j(0),
\qquad j=-1,0,1.
\]
Also,
\[
V_{t,R}(\lambda_t)\longrightarrow V_t(\lambda_t).
\]
Differentiating
\eqref{eqn-capacity-B-normal} and
\eqref{eqn-driving-Psi-normal} now gives the desired limits.
\end{proof}

\begin{proof}[Proof of Theorem~\ref{thm-main-normal}]
The compact support case proved in \S\ref{sec-bounded} applied to $\nu_R$ gives
\eqref{eqn-main-norm-1} and \eqref{eqn-main-norm-2} with $\nu_R$.  By
Lemma~\ref{lemma-standard-kernel-integrability}, the right-hand sides
converge by dominated convergence.  By
Lemma~\ref{lemma-standard-truncation-derivatives}, the left-hand sides
converge to the first variations for $\nu$.  This completes the proof.
\end{proof}

\begin{reviewchange}
\subsection{Positive-area traces and the zero-extension convention}
\label{sec-trace-area}
The conformal area formula through $g_t$ integrates over $D_t$, not
over all of $\HH$. The remaining integrals over $K_t^\circ$ record
the part of the ambient Beltrami coefficient which is not seen by
the pullback coefficient $\nu_t$. We first record when these terms
vanish and then show that they can be essential.

Put $K:=\gamma([0,T_+))$.
\begin{corollary}\label{coro-standard-zero-extension}
Let $\nu^\circ\in L^\infty(\HH\setminus K)$ and define the Beltrami 
coefficient on the upper half-plane by
\begin{align}\label{eqn-global-zero-extension}
\nu(z):=
\begin{cases}
\nu^\circ(z),&z\in\HH\setminus K,\\
0,&z\in K\cap\HH.
\end{cases}
\end{align}
Then
\begin{align*}
\dot\lambda_{t,\mathrm{std}}
&=
-\frac{2}{\pi}\operatorname{Re}
\iint_{D_t}
\nu(z)\left(
\frac{g_t'(z)^2}{g_t(z)-\lambda_t}
-\frac1z-\frac{\lambda_t}{z(z-1)}
\right)\dA (z),
\\
\dot a_{t,\mathrm{std}}
&=
\frac{1}{\pi}\operatorname{Re}
\iint_{D_t}
\nu(z)\left(
g_t'(z)^2-1+\frac{4t}{z(z-1)}
\right)\dA (z).
\end{align*}
The same conclusion holds for a coefficient on all of $\HH$ whenever
$\nu=0$ almost everywhere on $K_t^\circ$, or whenever
$\Area(K_t^\circ)=0$.  
\end{corollary}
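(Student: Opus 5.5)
This is a direct application of Theorem~\ref{thm-main-normal}. The plan is to check that the coefficient $\nu$ defined in \eqref{eqn-global-zero-extension} is admissible, and then to show that the two trace integrals over $K_t^\circ$ in \eqref{eqn-main-norm-1}--\eqref{eqn-main-norm-2} vanish.

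First, $\nu$ is a bounded Beltrami differential on $\HH$. Since $\nu^\circ\in L^\infty(\HH\setminus K)$ and $K\cap\HH$ is a Borel set (a countable union of the compact arcs $\gamma[0,s]$, intersected with $\HH$), $\nu$ is measurable with $\|\nu\|_\infty\le\|\nu^\circ\|_\infty$. Theorem~\ref{thm-main-normal} therefore applies at every fixed $0\le t<T_+$. It gives differentiability of $\lambda_{t,\mathrm{std}}^{\eps\nu}$ and $a_{t,\mathrm{std}}^{\eps\nu}$ at $\eps=0$, together with the formulas \eqref{eqn-main-norm-1}--\eqref{eqn-main-norm-2}, in which all four integrals are absolutely convergent by Lemma~\ref{lemma-standard-kernel-integrability}.

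Next, since $K_t^\circ=\gamma[0,t]\cap\HH\subset K\cap\HH$, we have $\nu\equiv0$ on $K_t^\circ$. The integrands of the trace integrals are $\nu B_t$ and $\nu C_t$, in the notation of \eqref{eqn-ABC_C}. Both vanish identically on $K_t^\circ$, and $B_t,C_t$ are integrable there, so both trace integrals are zero. What remains are exactly the two $D_t$-integrals in the statement. On $D_t$ we have $\nu=\nu^\circ$ off $K\setminus K_t$; on $K\setminus K_t$ the value is $0$, and this is already built into $\nu$, so the formulas stay as stated in terms of $\nu$.

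For the final sentence, the same argument applies to any $\nu\in L^\infty(\HH)$:
\begin{itemize}
\item if $\nu=0$ a.e.\ on $K_t^\circ$, then $\nu B_t$ and $\nu C_t$ vanish a.e.\ on $K_t^\circ$;
\item if $\Area(K_t^\circ)=0$, then any integral over $K_t^\circ$ of an $L^1$ function is zero.
\end{itemize}
In either case the trace terms drop out. There is no genuine obstacle here. The only point needing care is measurability of the zero extension, and the observation that the hypothesis only has to control $\nu$ on $K_t^\circ$ and not on all of $K$.
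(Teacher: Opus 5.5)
Your proposal is correct and matches the paper's proof: apply Theorem~\ref{thm-main-normal} to the zero-extended coefficient and note that $K_t^\circ\subset K\cap\HH$ (or, in the general case, the a.e.\ vanishing or zero-area hypothesis) makes both trace integrals vanish. Your measurability remark that $K\cap\HH$ is an $F_\sigma$ set is a harmless addition that the paper leaves implicit.
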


\begin{remark}
When $K$ has positive area, prescribing a Beltrami coefficient only on
$\HH\setminus K$ does not determine a normalized quasiconformal
self-map of $\HH$.  Formula~\eqref{eqn-global-zero-extension} is part
of the deformation.  The extension must be chosen once on the full
fixed trace $K$, rather than separately on $K_t$ for each $t$.
\end{remark}

\end{reviewchange}

\begin{proof}[Proof of Corollary~\ref{coro-standard-zero-extension}]
Since $K_t^\circ\subset K\cap\HH$, the zero extension
\eqref{eqn-global-zero-extension} vanishes on $K_t^\circ$, so both
trace integrals in Theorem~\ref{thm-main-normal} vanish.  If
$\Area(K_t^\circ)=0$, they vanish for every $L^\infty$-representative.
\end{proof}

\begin{reviewchange}
Notice that the  formulas  \eqref{eqn-main-norm-1} and \eqref{eqn-main-norm-2}  contain an integral over the trace $K_t^\circ$. When $\Area(K_t^\circ)=0$ this term vanishes, but when
$\Area(K_t^\circ)>0$ it may be nonzero.  The following
proposition shows that the trace contribution can be nonzero even when
the corresponding domain contribution vanishes.  Thus, in general,
the variation cannot be recovered from the Beltrami coefficient on
$D_t$ alone.

\begin{proposition}\label{prop-positive-area-test}
Suppose that $\Area(K_t^\circ)>0$.  Then there exists
$\nu\in L^\infty(\HH)$ supported on $K_t^\circ$ such that the
$D_t$-integral in \eqref{eqn-main-norm-2} vanishes, whereas
\[
\dot a_{t,\mathrm{std}}>0.
\]
\end{proposition}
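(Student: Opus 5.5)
The plan is to take $\nu$ supported on the trace and choose its phase so that the trace term in \eqref{eqn-main-norm-2} becomes a positive integral of $|C_t|$. First note that $\Area(K_t^\circ)>0$ forces $t>0$, because $K_0=\{0\}$. Let $C_t(z)=-1+4t/(z(z-1))$ be as in \eqref{eqn-ABC_C}, and define
\[
\nu(z):=\frac12\,\frac{\overline{C_t(z)}}{|C_t(z)|}\,\mathds{1}_{K_t^\circ}(z)
\quad\text{where } C_t(z)\neq0,
\]
with $\nu:=0$ elsewhere. Then $\nu$ is measurable, $\|\nu\|_\infty\le 1/2<1$, and $\nu$ is supported on $K_t^\circ$. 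Hence $\nu\in L^\infty(\HH)$ and Theorem~\ref{thm-main-normal} applies to it without any further hypothesis.

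Since $\nu=0$ on $D_t$, the $D_t$-integral in \eqref{eqn-main-norm-2} vanishes identically; it is absolutely convergent by Lemma~\ref{lemma-standard-kernel-integrability} in any case. The same observation shows that the pullback coefficient $\nu_t$ of \eqref{eqn-nu-t-normal} vanishes, which illustrates that this variation is invisible from $D_t$. Formula \eqref{eqn-main-norm-2} therefore reduces to
\[
\dot a_{t,\mathrm{std}}
=\frac1\pi\operatorname{Re}\iint_{K_t^\circ}\nu(z)C_t(z)\dA(z)
=\frac1{2\pi}\iint_{K_t^\circ}|C_t(z)|\dA(z).
\]
This integral is finite because $C_t|_{K_t^\circ}\in L^1(K_t^\circ)$ by Lemma~\ref{lemma-standard-kernel-integrability}.

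It remains to show that the last integral is strictly positive. On $\HH$, the zeros of $C_t$ are the solutions of $z(z-1)=4t$, a quadratic equation with at most two roots. Hence $\{C_t=0\}$ has zero area, so $|C_t|>0$ almost everywhere on $K_t^\circ$. Combined with $\Area(K_t^\circ)>0$, this gives $\iint_{K_t^\circ}|C_t|\dA>0$, and therefore $\dot a_{t,\mathrm{std}}>0$.

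I expect no real obstacle. The only points needing care are the measurability of the phase choice and the observation that the zero set of the rational kernel is finite. If a smoother or continuous example were wanted, one could instead take $\nu=\tfrac12\mathds{1}_{E}$ for a subset $E\subset K_t^\circ$ of positive area on which $\operatorname{Re}C_t$ has a fixed sign. Such a subset exists after shrinking to a small disk where $\operatorname{Re}C_t\neq0$, which is possible because the zero set of $\operatorname{Re}C_t$ is a real-analytic curve of zero area. One then multiplies by $\pm1$ so that the resulting variation is positive.
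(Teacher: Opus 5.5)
Your proposal is correct and is essentially the paper's proof: take $\nu$ to be the conjugate phase $\overline{C_t}/|C_t|$ on $K_t^\circ$ and zero elsewhere. The paper omits your factor $\tfrac12$, which is harmless, since only small $\eps$ matters. Then the $D_t$-integral vanishes and $\dot a_{t,\mathrm{std}}$ reduces to a positive multiple of $\iint_{K_t^\circ}|C_t|\,dA$, which is strictly positive because $C_t$ has only finitely many zeros.
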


\begin{proof}
Define
\[
\nu(z)=
\begin{cases}
\overline{C_t(z)}/|C_t(z)|,
& z\in K_t^\circ,\ C_t(z)\neq0,\\
0,&\text{otherwise}.
\end{cases}
\]
Then $\|\nu\|_\infty\le1$, and since $\nu=0$ almost everywhere on
$D_t$, the domain integral in \eqref{eqn-main-norm-2} is zero.  On the
other hand,
\[
\dot a_{t,\mathrm{std}}
=
\frac1\pi\iint_{K_t^\circ}|C_t(z)|\,\dA(z).
\]
Since $C_t$ is a nonzero rational function and therefore has only
finitely many zeros, while $\Area(K_t^\circ)>0$, the last integral is
strictly positive.
\end{proof}
\end{reviewchange}

%

\section{Proof of Theorem~\ref{thm-main-hh}}
\label{sec-existence-oscillatory}

\subsection{Analytic tools}
\label{sec-analytic-preliminaries}

Recall that the Cauchy transform and the regularized Cauchy transform
are defined by
\begin{align*}
C(h)(z)
&:=-\frac{1}{\pi}\iint_{\C}
\frac{h(\zeta)}{\zeta-z}\dA (\zeta),
\\
P(h)(z)
&:=-\frac{1}{\pi}\iint_{\C}
h(\zeta)\left(\frac{1}{\zeta-z}-\frac{1}{\zeta}\right)\dA (\zeta).
\end{align*}
The Beurling transform is defined as the singular integral
\begin{align*}
T(h)(z):=-\frac{1}{\pi}\lim_{\delta\downarrow0}
\iint_{|\zeta-z|>\delta}
\frac{h(\zeta)}{(\zeta-z)^2}\dA (\zeta),
\end{align*}
where the limit is understood in the usual singular-integral sense
(in $L^s$ and almost everywhere), or equivalently as the bounded
extension of this operator on $L^s(\C)$, $1<s<\infty$.
For more properties of these transforms, see \cite[Chapter~4]{AIM09}.
$T$ is bounded on $L^p(\C)$ for every $1<p<\infty$ and
$\lim_{p\to2}\|T\|_{L^p}=1$.  
The following lemma follows by the same argument as
\cite[Theorem~4.3.11]{AIM09}; the exponents $p$ and $q$ need not be
conjugate.

\begin{lemma}\label{thm-astal-modify}
Assume $1<q<2<p<\infty$.  If $h\in L^p(\C)\cap L^q(\C)$, then
\review{$C(h)$ is continuous on $\C$ and vanishes at infinity, i.e.}
\[
\lim_{z\to\infty}C(h)(z)=0 .
\]
Moreover, there exists a constant $M_{p,q}$ such that
\[
\|C(h)\|_\infty\le M_{p,q}\bigl(\|h\|_p+\|h\|_q\bigr).
\]
In fact, the absolute potential is bounded uniformly in $z$:
\begin{align}\label{eqn-cauchy-absolute-potential}
\sup_{z\in\C}\iint_{\C}\frac{|h(\zeta)|}{|\zeta-z|}\dA (\zeta)
\le
C_{p,q}\bigl(\|h\|_p+\|h\|_q\bigr).
\end{align}
\end{lemma}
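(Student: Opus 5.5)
The plan is to prove the absolute-potential bound \eqref{eqn-cauchy-absolute-potential} first, by splitting the kernel, and then obtain boundedness, continuity and decay of $C(h)$ as consequences. Fix $z\in\C$ and split the integral over the disk $\{|\zeta-z|<1\}$ and its complement. On the disk we apply Hölder with exponent $p$. The conjugate exponent $p'=p/(p-1)$ is $<2$, so $|\zeta-z|^{-1}\in L^{p'}(D(z,1))$ with a norm independent of $z$, and the near part is bounded by $c_p\|h\|_p$. On the complement we apply Hölder with exponent $q$. Now $q'>2$, so $|\zeta-z|^{-1}\in L^{q'}(\{|\zeta-z|\ge1\})$, since $\int_1^\infty r^{-q'}r\,dr<\infty$ exactly when $q'>2$. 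The far part is therefore bounded by $c_q\|h\|_q$. Adding the two gives \eqref{eqn-cauchy-absolute-potential}, and the bound $\|C(h)\|_\infty\le M_{p,q}(\|h\|_p+\|h\|_q)$ follows at once with $M_{p,q}=C_{p,q}/\pi$. Nothing here uses conjugacy of $p$ and $q$; only $p>2>q>1$ is needed, which is the point of the remark preceding the lemma.

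For continuity and vanishing at infinity, we approximate $h$ by $h_k\in C_c(\C)$ simultaneously in $L^p$ and $L^q$, for instance by truncating and mollifying. By the uniform bound just proved, $C(h_k)\to C(h)$ uniformly on $\C$. Each $C(h_k)$ is continuous: the kernel $1/(\zeta-z)$ is locally integrable, so $C(h_k)$ is a convolution of a bounded compactly supported function with an $L^1_{\mathrm{loc}}$ function. Each $C(h_k)$ also satisfies $C(h_k)(z)=O(1/|z|)$ as $z\to\infty$, because $h_k$ has compact support. Uniform limits preserve both continuity and the property of vanishing at infinity, so $C(h)$ is continuous on $\C$ and $\lim_{z\to\infty}C(h)(z)=0$.

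The only delicate point is the choice of exponents on each region. The local singularity forces the use of $L^p$ with $p>2$, and the decay at infinity forces the use of $L^q$ with $q<2$. Once these roles are assigned correctly, everything reduces to the two elementary radial integrals above, so I do not expect a genuine obstacle.
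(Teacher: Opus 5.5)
Your proposal is correct and follows essentially the same route as the paper. The paper likewise splits the kernel at $|\zeta-z|=1$, applies H\"older with $|\cdot|^{-1}\in L^{p'}$ near the singularity and $|\cdot|^{-1}\in L^{q'}$ away from it, and then obtains $C(h)\in C_0(\C)$ by approximating $h$ simultaneously in $L^p$ and $L^q$ by compactly supported functions, using the uniform bound to pass to the limit.
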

\begin{reviewchange}
\begin{proof}
Split the potential into $|\zeta-z|<1$ and $|\zeta-z|\ge1$.
H\"older's inequality uses $|\cdot|^{-1}\in L^{p'}(B(0,1))$
because $p'<2$, and $|\cdot|^{-1}\in L^{q'}(\C\setminus B(0,1))$
because $q'>2$. This proves the uniform absolute-potential estimate
and the stated bound for $C(h)$. Approximate $h$ simultaneously in
$L^p$ and $L^q$ by smooth compactly supported functions. Their Cauchy
transforms are continuous and tend to zero at infinity, and the bound
just proved gives uniform convergence of the transforms. Thus
$C(h)\in C_0(\C)$.
\end{proof}
\end{reviewchange}

\subsection{The existence theorem with hydrodynamic
normalization}

\begin{theorem}\label{thm-QC-new-rd-1-modified}
Assume that the Beltrami differential $\mu$ satisfies
$\|\mu\|_\infty\le k<1$ and
$\mu\in L^r(\C)$ for some $0<r<2$.  Then there exists a unique quasiconformal map
$\psi^\mu\colon\C\to\C$ satisfying:
\begin{enumerate}
\item Beltrami equation:
$\partial_{\bar z}\psi^\mu=\mu\,\partial_z\psi^\mu$;
\item Hydrodynamic
normalization: $\psi^\mu(0)=0$, and as $z\to\infty$,
\[
\psi^\mu(z)=z+O(1);
\]
\item there exist  $1<q<2<p<\infty$ such that
$h=(I-\mu T)^{-1}\mu\in L^q(\C)\cap L^p(\C)$ and
\begin{align*}
\psi^\mu(z)=z+P(h)(z)=z+C(h)(z)-C(h)(0),
\end{align*}
where \review{$C(h)\in C_0(\C)$}.  Moreover, there exists a constant
$B$ such that $\psi^\mu(z)=z+B+o(1)$.
\end{enumerate}
\end{theorem}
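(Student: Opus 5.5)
Since $\mu\in L^\infty\cap L^r$ with $r<2$, interpolation gives $\mu\in L^s(\C)$ for every $s\in[r,\infty]$, in particular for all $s$ in a neighbourhood of $2$. The plan is to construct $\psi^\mu$ from the Neumann series for $(I-\mu T)^{-1}$, to show that the resulting candidate is a homeomorphism with the right asymptotics, and then to prove uniqueness with a Liouville argument.

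\textbf{Existence.} Since $\lim_{s\to2}\|T\|_{L^s}=1$ and $\|\mu\|_\infty\le k<1$, we can choose $1<q<2<p<\infty$ with $\max(r,1)<q$ and $k\|T\|_{L^q}<1$, $k\|T\|_{L^p}<1$. Then $I-\mu T$ is invertible on both $L^q$ and $L^p$, and the same Neumann series $h=\sum_{j\ge0}(\mu T)^j\mu$ converges in both spaces to a common limit $h\in L^q\cap L^p$. Lemma~\ref{thm-astal-modify} then shows that $C(h)$ is continuous, lies in $C_0(\C)$, and that the absolute potential is bounded. Consequently $P(h)=C(h)-C(h)(0)$ is well defined and absolutely convergent.

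Put $\psi(z)=z+C(h)(z)-C(h)(0)$. In the distributional sense $\partial_{\bar z}C(h)=h$ and $\partial_z C(h)=T(h)$. Hence $\partial_{\bar z}\psi=h$ and $\partial_z\psi=1+Th$, and the identity $h=\mu+\mu Th$ is exactly the Beltrami equation $\partial_{\bar z}\psi=\mu\,\partial_z\psi$. Because $\partial_{\bar z}\psi,\partial_z\psi-1\in L^p_{\mathrm{loc}}$, $\psi$ lies in $W^{1,p}_{\mathrm{loc}}$. It also satisfies $\psi(0)=0$ and $\psi(z)=z-C(h)(0)+o(1)$, which gives (2) and (3) with $B=-C(h)(0)$.

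To see that $\psi$ is a homeomorphism, compare it with the normalized solution $w$ fixing $0,1,\infty$ given by the measurable Riemann mapping theorem. Stoilow factorization writes $\psi=F\circ w$, where $F$ is holomorphic on $\C$ because $\psi$ is a $W^{1,2}_{\mathrm{loc}}$ solution of the same equation. Now $|\psi(z)|\sim|z|$ and $w$ is quasisymmetric at infinity, so $F$ has polynomial growth and is a polynomial. The degree of $F$ is pinned to $1$ by the topological degree: $\psi$ is homotopic to the identity on large circles, since $\psi(z)-z$ is bounded. Therefore $F$ is affine, and $\psi$ is quasiconformal.

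\textbf{Uniqueness.} Let $\phi$ be another quasiconformal solution with $\phi(0)=0$ and $\phi(z)=z+O(1)$. Then $\phi\circ\psi^{-1}$ is conformal on $\C$, hence affine, say $az+b$. The normalization at $0$ gives $b=0$. Since $\psi^{-1}(w)=w+O(1)$, we get $\phi\circ\psi^{-1}(w)=w+O(1)$, so $a=1$. Thus $\phi=\psi$.

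\textbf{Main obstacle.} The delicate step is the homeomorphism argument. It needs care in applying Stoilow factorization to a Sobolev solution that is not yet known to be injective, and in ruling out polynomial degree $>1$. If the Stoilow route is awkward, the alternative is to approximate $\mu$ by compactly supported truncations $\mu\mathds{1}_{\{|z|<R\}}$. For these, the classical theory gives quasiconformal principal solutions of the form $z+C(h_R)$. The Neumann series gives $h_R\to h$ in $L^p\cap L^q$, so by Lemma~\ref{thm-astal-modify} these maps converge uniformly to $\psi$. They are uniformly $K$-quasiconformal, so their uniform limit is either quasiconformal or constant, and it cannot be constant since $\psi(z)\sim z$. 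I would take this approximation route as the primary argument.
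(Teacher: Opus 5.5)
Your Neumann-series construction of $h$ on $L^q\cap L^p$, the distributional check of the Beltrami equation and of $\psi^\mu(z)=z+B+o(1)$, and the uniqueness argument are the same as the paper's. Your Stoilow sketch is also the paper's homeomorphism proof. The paper gets polynomiality more cheaply: $\Phi=\psi^\mu\circ(F^\mu)^{-1}$ is entire with $|\Phi(w)|\to\infty$, so $\infty$ is a pole and no quasisymmetric growth bound is needed. It then gets $\deg\Phi=1$ from the homotopy $\gamma_R\simeq\psi^\mu\circ\gamma_R-B$ in $\C\setminus\{0\}$ and additivity of winding numbers.

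The route you designate as primary is different, and it is correct. You truncate to $\mu_R=\mu\mathbf 1_{\{|z|<R\}}$, take the classical principal solutions, normalize them at $0$, and pass to the limit. The step $h_R\to h$ should be justified explicitly, and the cleanest way is the identity $(I-\mu_RT)(h-h_R)=\mathbf 1_{\{|z|\ge R\}}h$. It gives $\|h-h_R\|_q+\|h-h_R\|_p\le(1-\rho)^{-1}\bigl(\|\mathbf 1_{\{|z|\ge R\}}h\|_q+\|\mathbf 1_{\{|z|\ge R\}}h\|_p\bigr)\to0$, with $\rho=k\max\{\|T\|_{L^q},\|T\|_{L^p}\}$. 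The bound in Lemma~\ref{thm-astal-modify} then gives uniform convergence on all of $\C$. The closure theorem for $K$-quasiconformal maps, $K=(1+k)/(1-k)$, makes the nonconstant limit a quasiconformal homeomorphism onto its image. You should add one line noting that this image is all of $\C$, because $\psi^\mu$ is proper: $|\psi^\mu(z)|\to\infty$.

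As for the trade-off, your route needs only the compactly supported measurable Riemann mapping theorem plus normal-family compactness. It avoids factorizing a not-yet-injective Sobolev solution, avoids any degree count, and gives the dilatation bound directly. The paper's route is shorter once the full measurable Riemann mapping theorem and Stoilow factorization are available. It also exhibits $\psi^\mu$ explicitly as an affine image of the $0,1,\infty$-normalized solution $F^\mu$.
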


\begin{proof}
Since $\mu\in L^r(\C)\cap L^\infty(\C)$ for some $0<r<2$, we may
choose
\[
\max\{1,r\}<q<2<p<\infty
\]
sufficiently close to $2$ so that
\[
k\|T\|_{L^q}<1,
\qquad
k\|T\|_{L^p}<1 .
\]
Indeed, the Beurling transform $T$ is bounded on $L^s(\C)$ for every
$1<s<\infty$, $\|T\|_{L^2}=1$, and $s\mapsto\|T\|_{L^s}$ is
continuous at $s=2$.  Moreover, since
\[
\|\mu\|_s^s
\le
\|\mu\|_\infty^{\,s-r}\|\mu\|_r^r,
\qquad s\ge r,
\]
we have $\mu\in L^q(\C)\cap L^p(\C)$.

Set
\[
X:=L^q(\C)\cap L^p(\C),
\qquad
\|f\|_X:=\|f\|_q+\|f\|_p .
\]
Then $X$ is a Banach space, and for every $f\in X$,
\[
\|\mu Tf\|_X
\le
\rho\|f\|_X,
\qquad
\rho:=
\max\bigl\{k\|T\|_{L^q},\,k\|T\|_{L^p}\bigr\}<1 .
\]
Hence $I-\mu T$ is invertible on $X$ by the Neumann series.  Define
\begin{align*}
h:=(I-\mu T)^{-1}\mu
=
\sum_{j=0}^{\infty}(\mu T)^j\mu .
\end{align*}
Then $h\in L^q(\C)\cap L^p(\C)$ and
\begin{align}\label{eq:h-beltrami}
h=\mu\bigl(1+T(h)\bigr).
\end{align}

Define
\[
\psi^\mu(z)
:=
z+P(h)(z)
=
z+C(h)(z)-C(h)(0).
\]
Then $\psi^\mu(0)=0$.  By Lemma~\ref{thm-astal-modify}, $C(h)$ is
continuous on $\C$ and
\[
C(h)(z)\longrightarrow0
\qquad\text{as }|z|\to\infty .
\]
Using $\partial_{\bar z}C(h)=h$ and $\partial_zC(h)=T(h)$ in the sense
of distributions, we obtain
\[
\partial_{\bar z}\psi^\mu=h,
\qquad
\partial_z\psi^\mu=1+T(h).
\]
Thus \eqref{eq:h-beltrami} gives
\[
\partial_{\bar z}\psi^\mu
=
\mu\,\partial_z\psi^\mu
\qquad\text{a.e.\ on }\C .
\]
Furthermore,
\begin{align*}
\psi^\mu(z)
=
z-C(h)(0)+o(1)
=
z+B+o(1),
\qquad |z|\to\infty ,
\end{align*}
where
\begin{align*}
B:=-C(h)(0)
=
\frac1\pi
\iint_{\C}
\frac{h(\zeta)}{\zeta}\dA (\zeta).
\end{align*}

It remains to prove that $\psi^\mu$ is a global homeomorphism.  Since
$p>2$ and $h,T(h)\in L^p(\C)$, we have $\psi^\mu\in W_{\mathrm{loc}}^{1,p}(\C)$.
Let $F^\mu\colon\Ch\longrightarrow\Ch$ 
be the normalized quasiconformal homeomorphism with Beltrami
coefficient $\mu$, fixing $0,1,\infty$.  Since $\psi^\mu$ and $F^\mu$
solve the same Beltrami equation,  there exists an entire function $\Phi$ such that
$\psi^\mu=\Phi\circ F^\mu$ .
Since $F^\mu$ fixes $\infty$,
we have 
\[
|\Phi(w)|
=
\left|
\psi^\mu\bigl((F^\mu)^{-1}(w)\bigr)
\right|
\longrightarrow\infty
\qquad\text{as }|w|\to\infty .
\]
Thus $\infty$ is a pole of $\Phi$, and hence $\Phi$ is a nonconstant
polynomial.

We claim that $\deg\Phi=1$.  Write
\begin{align*}
\Phi(w)-B
=
c\prod_{j=1}^{N}(w-a_j)^{m_j},
\qquad c\neq0,
\end{align*}
where $a_1,\ldots,a_N$ are the distinct zeros of $\Phi-B$, and $m_j$
is the multiplicity of $a_j$.  Since
\[
\psi^\mu(z)-B=z+o(1)
\qquad\text{as }|z|\to\infty ,
\]
and the set $\{a_1,\ldots,a_N\}$ is finite, we may choose $R>1$
sufficiently large so that
\[
\bigl|\psi^\mu(z)-B-z\bigr|<1
\qquad\text{for }|z|=R,
\]
and
\[
\bigl|(F^\mu)^{-1}(a_j)\bigr|<R,
\qquad j=1,\ldots,N .
\]

Parametrize the circle $|z|=R$ positively by
\[
\gamma_R(t):=Re^{2\pi it},
\qquad 0\le t\le1 .
\]
Consider the straight-line homotopy
\begin{align*}
H(s,t)
:=
(1-s)\gamma_R(t)
+s\bigl(\psi^\mu(\gamma_R(t))-B\bigr).
\end{align*}
For every $(s,t)\in[0,1]^2$,
\[
\begin{aligned}
|H(s,t)|
&=
\left|
\gamma_R(t)
+s\bigl(
\psi^\mu(\gamma_R(t))-B-\gamma_R(t)
\bigr)
\right|\\
&\ge
R
-
s\bigl|
\psi^\mu(\gamma_R(t))-B-\gamma_R(t)
\bigr|\\
&>
R-s>0 .
\end{aligned}
\]
Thus $H$ is a homotopy in $\C\setminus\{0\}$ from $\gamma_R$ to
$\psi^\mu\circ\gamma_R-B$.  By Lemma~\ref{lem-winding-homotopy} in the Appendix \ref{appendix B}, we have 
\begin{align*}
\Ind\bigl(\psi^\mu\circ\gamma_R-B,0\bigr)
=
\Ind(\gamma_R,0)
=
1 .
\end{align*}
where $\operatorname{Ind}(\gamma,z)$ denotes the winding number of $\gamma$ about $z$; see Appendix \ref{appendix B} .

Set $\widetilde\gamma_R:=F^\mu\circ\gamma_R $.
Since $F^\mu$ is an orientation-preserving homeomorphism,
$\widetilde\gamma_R$ is a positively oriented Jordan curve.  Moreover, one has $\psi^\mu\circ\gamma_R
=
\Phi\circ\widetilde\gamma_R$. Hence
\[
\Ind\bigl(\Phi\circ\widetilde\gamma_R-B,0\bigr)=1 .
\]
By the choice of $R$, every $a_j$ lies in the bounded component of
$\C\setminus\widetilde\gamma_R$.  Therefore
\[
\Ind(\widetilde\gamma_R-a_j,0)=1,
\qquad j=1,\ldots,N .
\]

For every $t\in[0,1]$,
\[
\Phi(\widetilde\gamma_R(t))-B
=
c\prod_{j=1}^{N}
\bigl(\widetilde\gamma_R(t)-a_j\bigr)^{m_j}.
\]
The constant loop $t\mapsto c$ has winding number zero.  Hence, by repeated application of Lemma~\ref{lem-winding-product} in the Appendix \ref{appendix B}, we have
\begin{align*}
\begin{aligned}
1
=
\Ind\bigl(\Phi\circ\widetilde\gamma_R-B,0\bigr)
=
\sum_{j=1}^{N}
m_j\Ind(\widetilde\gamma_R-a_j,0)
=
\sum_{j=1}^{N}m_j
=
\deg\Phi .
\end{aligned}
\end{align*}
Thus $\deg\Phi=1$, so $\Phi(w)=aw+b$ with $ a\neq0$ .

Hence $\Phi$ is a conformal automorphism of $\C$, and therefore
$\psi^\mu=\Phi\circ F^\mu$ is a quasiconformal homeomorphism of $\C$
onto itself.

Finally, suppose that $\psi_1$ and $\psi_2$ are two quasiconformal
homeomorphisms satisfying the stated conditions.  Since they have the
same Beltrami coefficient,
\[
G:=\psi_1\circ\psi_2^{-1}
\]
is a conformal automorphism of $\C$, so $G(w)=aw+b$ for some
$a\neq0$.  Since $\psi_1=G\circ\psi_2$ and
\[
\psi_j(z)=z+B_j+o(1),
\qquad |z|\to\infty ,
\]
we have
\[
z+B_1+o(1)
=
a\bigl(z+B_2+o(1)\bigr)+b .
\]
Comparing the leading terms gives $a=1$.  Since
$\psi_1(0)=\psi_2(0)=0$, we also have $b=0$.  Thus $G=\mathrm{id}$
and $\psi_1=\psi_2$.  This proves the theorem.
\end{proof}
Throughout this section we assume that the Beltrami differential $\nu$
admits the decomposition
\begin{align}\label{eqn-general-n-decay}
\nu_0(z):=\nu(z)-a\,\frac{\bar z^{\,n-1}}{z^{n-1}}\in L^r(\C)
\quad\text{for some }r\in(0,2).
\end{align}

\subsection{Proof of the existence part of Theorem~\ref{thm-main-hh}}

We first prove the existence assertion in Theorem~\ref{thm-main-hh}.

\begin{theorem}\label{thm-yibande-n-new}
Let the Beltrami differential $\nu\in L^\infty(\C)$ satisfy
$\|\nu\|_\infty\le k<1$.  Assume there exists a constant
$a\in(-1,1)$ and an integer $n\ge1$ such that the global perturbation term satisfies
\eqref{eqn-general-n-decay}.  Then there exists a unique quasiconformal
map $\phi=\phi^\nu\colon\Ch\to\Ch$ satisfying:
\begin{enumerate}
\item Beltrami equation:
$\partial_{\bar z}\phi=\nu\,\partial_z\phi$;
\item Normalization condition: $\phi(0)=0$ and the following asymptotic
expansion holds:
\begin{align*}
\phi(z)=\bigl(z^n+a\bar z^{\,n}\bigr)^{1/n}+B+o(1),
\quad\text{as }z\to\infty ,
\end{align*}
where the constant $B\in\C$ is uniquely determined by $\nu$;
\item there exist exponents $1<q<2<p<\infty$ and
$h\in L^q(\C)\cap L^p(\C)$ such that
\begin{align*}
\phi(z)
=
\bigl(z^n+a\bar z^{\,n}\bigr)^{1/n}
+
P(h)\!\left(\bigl(z^n+a\bar z^{\,n}\bigr)^{1/n}\right).
\end{align*}
\end{enumerate}
\end{theorem}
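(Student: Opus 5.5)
Write $\Lambda_a(z):=(z^n+a\bar z^{\,n})^{1/n}$ for the model map, with the branch fixed in the footnote of \S\ref{sec-motiv-exam}. The plan is to factor $\phi=\psi^{\mu}\circ\Lambda_a$, where $\psi^\mu$ is the hydrodynamically normalized map of Theorem~\ref{thm-QC-new-rd-1-modified}, and to choose $\mu$ so that the composition has Beltrami coefficient $\nu$. We then read off (2) and (3) directly from that theorem.

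\textbf{Step 1 (the model map).} We check that $\Lambda_a$ is a quasiconformal homeomorphism of $\C$ fixing $0$. It is positively homogeneous of degree one, and it maps each ray $\arg z=\theta$ onto a ray. The angle map has positive derivative because $|a|<1$, and it has degree one on the circle. A direct computation from $\Lambda_a^n=z^n+a\bar z^{\,n}$ gives $n\Lambda_a^{n-1}\partial_z\Lambda_a=nz^{n-1}$ and $n\Lambda_a^{n-1}\partial_{\bar z}\Lambda_a=na\bar z^{\,n-1}$. Hence the Beltrami coefficient of $\Lambda_a$ is exactly $a\bar z^{\,n-1}/z^{n-1}$. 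The map commutes with conjugation, and it is bi-Lipschitz, since it is homogeneous with continuous nonvanishing derivatives on the unit circle.

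\textbf{Step 2 (the pushed-forward coefficient).} Set $w=\Lambda_a(z)$. By the composition formula, $\phi=\psi\circ\Lambda_a$ has coefficient $\nu$ exactly when $\psi$ has coefficient
\[
\mu(w)=\frac{\nu(z)-\mu_{\Lambda}(z)}{1-\overline{\mu_{\Lambda}(z)}\nu(z)}\,\frac{\partial_z\Lambda_a(z)}{\overline{\partial_z\Lambda_a(z)}},\qquad \mu_\Lambda(z)=a\frac{\bar z^{\,n-1}}{z^{n-1}}.
\]
The numerator equals $\nu_0(z)$, and the denominator is bounded below by $1-|a|k>0$. It follows that $|\mu(w)|\le C|\nu_0(z)|$ and $\|\mu\|_\infty<1$. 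In fact $\|\mu\|_\infty\le (k+|a|)/(1+|a|k)$, which is less than $1$. Since $\Lambda_a$ is bi-Lipschitz, the Jacobian of $\Lambda_a^{-1}$ is bounded, so $\mu\in L^r(\C)$. The map $\mu$ is also symmetric.

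\textbf{Step 3 (existence).} Apply Theorem~\ref{thm-QC-new-rd-1-modified} to $\mu$ and set $\phi:=\psi^\mu\circ\Lambda_a$. Then $\phi(0)=0$. We have $\phi(z)=\Lambda_a(z)+P(h)(\Lambda_a(z))$ with $h\in L^q\cap L^p$, which gives (3). Since $P(h)(w)=C(h)(w)-C(h)(0)$, $C(h)\in C_0$ and $|\Lambda_a(z)|\to\infty$, we get (2) with $B=-C(h)(0)$. Symmetry of $\mu$ and uniqueness give that $\phi$ commutes with conjugation and preserves $\HH$.

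\textbf{Step 4 (uniqueness).} This is the step needing most care. Let $\phi_1,\phi_2$ both satisfy (1)--(2). Then $\psi_j:=\phi_j\circ\Lambda_a^{-1}$ have the same coefficient $\mu$, so $\psi_1\circ\psi_2^{-1}=Aw+b$. Each $\psi_j$ satisfies $\psi_j(w)=w+B_j+o(1)$, and this uses that $\Lambda_a^{-1}(w)\to\infty$ as $w\to\infty$. Comparing leading terms as in the proof of Theorem~\ref{thm-QC-new-rd-1-modified} gives $A=1$, and $\phi_j(0)=0$ gives $b=0$. The constant $B$ is then determined by $\nu$.

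The main obstacle is Step 1: verifying that $\Lambda_a$ is a global homeomorphism with the stated Beltrami coefficient for every integer $n$. The difficulty is the well-definedness of the $n$-th root branch across the full angular range. I would handle it in polar coordinates, writing $\Lambda_a(re^{i\theta})=re^{i\theta}(1+ae^{-2in\theta})^{1/n}$ with the principal root. This is continuous since $|ae^{-2in\theta}|<1$, and the angular function $\theta-\frac1n\operatorname{Im}\log(1+ae^{-2in\theta})$ is strictly increasing by a derivative computation.
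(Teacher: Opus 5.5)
Your proposal is correct and takes essentially the same route as the paper: you factor $\phi=\Psi\circ F$ through the model map $F(z)=(z^n+a\bar z^{\,n})^{1/n}$, verified to be a global homeomorphism via the strictly increasing angle function in polar coordinates, then push $\nu$ forward to a coefficient in $L^r(\C)$ with sup-norm below $1$, apply Theorem~\ref{thm-QC-new-rd-1-modified}, and obtain uniqueness by reducing $\phi_j\circ F^{-1}$ to the uniqueness statement of that theorem. One caveat: your side claims that $\mu$ is symmetric and that $\phi$ preserves $\HH$ are not part of this statement, and they need the extra hypothesis that $\nu$ is symmetric, which this theorem does not assume (compare Remark~\ref{remark-half-plane-expansion}, which also uses the imaginary axis to rule out $\HH$ being mapped to the lower half-plane).
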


\begin{proof}
Set
\[
\mu_F(z):=a\,\frac{\bar z^{\,n-1}}{z^{n-1}},
\qquad
\nu_0:=\nu-\mu_F ,
\]
and consider the following quasiconformal map:
\begin{align*}
F(z):=\bigl(z^n+a\bar z^{\,n}\bigr)^{1/n}.
\end{align*}
More precisely, for $z\neq0$ we take
\[
F(z)
=
z\left(
1+a\Bigl(\frac{\bar z}{z}\Bigr)^{n}
\right)^{1/n},
\qquad
F(0)=0,
\]
\review{where the $n$-th root is defined using the principal logarithm on
the right half-plane, which contains $\overline{D}(1,|a|)$.
Thus the definition is valid for every $|a|<1$, including $a=0$.}
Moreover,
\[
(1-|a|)^{1/n}|z|
\le |F(z)|
\le(1+|a|)^{1/n}|z|,
\]
so that $|F(z)|\asymp|z|$.
It is easy to compute the Jacobian of $F$:
\[
J_F(z)
=
(1-a^2)
\left(
\frac{|z|}{|F(z)|}
\right)^{2(n-1)}\le C
\]
where \(C>0\) depends only on \(a\) and \(n\).
\begin{reviewchange}
We verify the global homeomorphism property before using $F^{-1}$.
In polar coordinates write
\[
 F(re^{i\theta})=rR(\theta)e^{i\Theta(\theta)},\qquad
 R(\theta)=|1+ae^{-2in\theta}|^{1/n},\qquad
 \Theta(\theta)=\theta+\frac1n\operatorname{Im}\log(1+ae^{-2in\theta}).
\]
Then
\[
 \Theta'(\theta)=\frac{1-a^2}{|1+ae^{-2in\theta}|^2}>0,
 \qquad \Theta(\theta+2\pi)=\Theta(\theta)+2\pi.
\]
Since $R$ is bounded above and below by positive constants, this
defines a homeomorphism of $\Ch$ fixing $0$ and $\infty$.
For $z\ne0$,
\[
 F_z=z^{n-1}F^{1-n},\qquad
 F_{\bar z}=a\bar z^{\,n-1}F^{1-n},\qquad
 F_{\bar z}/F_z=a(\bar z/z)^{n-1}.
\]
These derivatives are bounded; integrating along line segments,
splitting at $0$ when needed, shows that $F$ is locally Lipschitz
across $0$. It is therefore quasiconformal with coefficient $\mu_F$.
The displayed bounds for $|F(z)|/|z|$ also give
$0<c\le J_F(z)\le C<\infty$ almost everywhere.

\end{reviewchange}
Define the following Beltrami differential
\begin{align*}
\lambda
=
\left[
\frac{\nu-\mu_F}
{1-\nu\overline{\mu_F}}
\frac{\partial_zF}
{\overline{\partial_zF}}
\right]\circ F^{-1}.
\end{align*}
\noindent\textbf{Claim}:   we have $\lambda\in L^r(\C)$ and $\|\lambda\|_\infty<1$.

Indeed, notice that
\[
|\lambda(F(z))|
=
\left|
\frac{\nu(z)-\mu_F(z)}
{1-\nu(z)\overline{\mu_F(z)}}
\right|.
\]
The right-hand side is the pseudohyperbolic distance between $\nu(z)$
and $\mu_F(z)$.  Since $|\nu|\le k<1$ and $|\mu_F|=|a|<1$, there
exists $\kappa<1$, depending only on $k$ and $|a|$, such that
\[
\|\lambda\|_\infty\le\kappa<1 .
\]
Moreover,
\[
|\lambda(F(z))|
\le
\frac{1}{1-k|a|}\,|\nu(z)-\mu_F(z)|
=
\frac{1}{1-k|a|}\,|\nu_0(z)|.
\]
Hence, using the change of variables $w=F(z)$ and the uniform bounds
for $J_F$,
\begin{align*}
\int_{\C}|\lambda(w)|^r\,dA(w)
=
\int_{\C}
|\lambda(F(z))|^rJ_F(z)\,dA(z)
\le
C\int_{\C}|\nu_0(z)|^r\,dA(z)<\infty .
\end{align*}
Thus $\lambda\in L^r(\C)$ and $\|\lambda\|_\infty<1$. This completes the proof of the \textbf{Claim}.

Now, by the \textbf{Claim} above and Theorem~\ref{thm-QC-new-rd-1-modified}, there exists a unique
quasiconformal homeomorphism $\Psi\colon\C\to\C$
satisfying
\[
\partial_{\bar w}\Psi
=
\lambda\,\partial_w\Psi,
\qquad
\Psi(0)=0,
\]
and there exist  $1<q<2<p<\infty$ such that
\[
h:=(I-\lambda T)^{-1}\lambda
\in L^q(\C)\cap L^p(\C)
\]
and
\[
\Psi(w)
=
w+P(h)(w)
=
w+C(h)(w)-C(h)(0).
\]
In particular,
\[
\Psi(w)=w+B+o(1),
\qquad w\to\infty ,
\]
where
\[
B=-C(h)(0)
=
\frac1\pi
\iint_{\C}
\frac{h(\zeta)}{\zeta}\dA (\zeta).
\]
%
%
%
%
Define the quasiconformal homeomorphism as follows:
\[
\phi:=\Psi\circ F .
\]
Then $\mu_\phi=\nu$,
and hence
\[
\partial_{\bar z}\phi
=
\nu\,\partial_z\phi
\qquad\text{a.e.\ on }\C .
\]
Also,
\[
\phi(0)=\Psi(F(0))=\Psi(0)=0 .
\]
Furthermore, 
\[
\phi(z)
=
F(z)+P(h)(F(z)),
\]
that is,
\[
\phi(z)
=
\bigl(z^n+a\bar z^{\,n}\bigr)^{1/n}
+
P(h)\!\left(\bigl(z^n+a\bar z^{\,n}\bigr)^{1/n}\right),
\]
which proves assertion~(3).  Since $|F(z)|\asymp|z|$, we have
$F(z)\to\infty$ as $z\to\infty$, and therefore
\[
\phi(z)
=
\Psi(F(z))
=
F(z)+B+o(1)
=
\bigl(z^n+a\bar z^{\,n}\bigr)^{1/n}+B+o(1).
\]
This proves the required asymptotic normalization.

It remains to prove uniqueness.  Suppose that $\phi_1$ and $\phi_2$
both satisfy the conclusions of the theorem, and set
\[
\Psi_j:=\phi_j\circ F^{-1},
\qquad j=1,2 .
\]
By the Beltrami composition formula, both $\Psi_1$ and $\Psi_2$ have
Beltrami coefficient $\lambda$.  Moreover,
\[
\Psi_j(0)=0,
\]
and, since $F^{-1}(w)\to\infty$ as $w\to\infty$,
\[
\Psi_j(w)
=
w+B_j+o(1),
\qquad w\to\infty .
\]
The uniqueness part of Theorem~\ref{thm-QC-new-rd-1-modified}
therefore yields $\Psi_1=\Psi_2$, and consequently
\[
\phi_1=\Psi_1\circ F=\Psi_2\circ F=\phi_2 .
\]
In particular, $B_1=B_2$, so the constant $B$ is uniquely determined
by $\nu$.
\end{proof}

\begin{corollary}\label{coro-hyl-67-modified-new}
Let the Beltrami differential $\nu\in L^\infty(\C)$ satisfy
$\|\nu\|_\infty\le k<1$.  Assume there exists a constant
$a\in(-1,1)$ and an integer $n\ge1$ such that
$\nu_0(z)=\nu(z)-a\bar z^{\,n-1}/z^{n-1}\in L^r(\C)$ for some
$r\in(0,2)$. For $\eps\in\R$ with $|\eps|$ sufficiently small, let $\phi^{\eps\nu}$
be the quasiconformal map corresponding to the coefficient $\eps\nu$
 obtained in Theorem~\ref{thm-yibande-n-new}.  Then the following
first-order variational expansion holds uniformly on $\C$:
\begin{align*}
\phi^{\eps\nu}(z)
=
\bigl(z^n+\eps a\bar z^{\,n}\bigr)^{1/n}
+\eps P(\nu_0)(z)+O(\eps^2).
\end{align*}
In particular, as $z\to\infty$ the asymptotic behavior reduces to
\begin{align*}
\phi^{\eps\nu}(z)
=
\bigl(z^n+\eps a\bar z^{\,n}\bigr)^{1/n}
+\frac{\eps}{\pi}
\iint_{\C}\frac{\nu_0(\zeta)}{\zeta}\dA (\zeta)
+\eps\,o(1)+O(\eps^2),
\end{align*}
where the $O(\eps^2)$ term is uniform in $z\in\C$, while the
$o(1)$ term is taken as $z\to\infty$ for each fixed sufficiently
small $|\eps|$.
\end{corollary}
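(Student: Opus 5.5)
Write $F_\eps(z):=(z^n+\eps a\bar z^{\,n})^{1/n}$. By the construction in the proof of Theorem~\ref{thm-yibande-n-new}, $\phi^{\eps\nu}=\Psi_\eps\circ F_\eps$. Here $\Psi_\eps$ is the hydrodynamically normalized map of Theorem~\ref{thm-QC-new-rd-1-modified} with coefficient
\[
\lambda_\eps=\Bigl[\frac{\eps\nu_0}{1-\eps^2\nu\overline{\mu_F}}\,\frac{\partial_zF_\eps}{\overline{\partial_zF_\eps}}\Bigr]\circ F_\eps^{-1},
\]
and $\Psi_\eps(w)=w+P(h_\eps)(w)$ with $h_\eps=(I-\lambda_\eps T)^{-1}\lambda_\eps$. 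So
\[
\phi^{\eps\nu}(z)=F_\eps(z)+P(h_\eps)(F_\eps(z)),
\]
and it suffices to show that
\[
P(h_\eps)(F_\eps(z))=\eps P(\nu_0)(z)+O(\eps^2)
\]
uniformly in $z$. The plan has three steps, all run in the Banach space $X=L^q\cap L^p$ with $q,p$ fixed near $2$ and chosen uniformly for $|\eps|\le\eps_0$.

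\textbf{Step 1: control of $\lambda_\eps$.} Since $|\lambda_\eps|\lesssim|\eps||\nu_0|\circ F_\eps^{-1}$ and $J_{F_\eps}$ is uniformly bounded above and below, we get $\|\lambda_\eps\|_X=O(\eps)$. The Neumann series then gives
\[
h_\eps=\lambda_\eps+(\lambda_\eps T)\lambda_\eps+\dots=\lambda_\eps+O_X(\eps^2).
\]
Using Lemma~\ref{thm-astal-modify}, the bound $\|C(f)\|_\infty\le M(\|f\|_p+\|f\|_q)$ converts this into $P(h_\eps)=P(\lambda_\eps)+O(\eps^2)$ uniformly on $\C$. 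The same argument gives $\lambda_\eps=\eps\widetilde\lambda_\eps+O_X(\eps^3)$, where
\[
\widetilde\lambda_\eps:=\Bigl[\nu_0\,\frac{\partial_zF_\eps}{\overline{\partial_zF_\eps}}\Bigr]\circ F_\eps^{-1}.
\]

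\textbf{Step 2: removing $F_\eps$ inside the transform.} We must compare $P(\widetilde\lambda_\eps)(F_\eps(z))$ with $P(\nu_0)(z)$ up to $O(\eps)$, uniformly in $z$. Change variables $\zeta=F_\eps(\xi)$ in the Cauchy integral, so that
\[
C(\widetilde\lambda_\eps)(F_\eps(z))=-\frac1\pi\iint\nu_0(\xi)\,\frac{\partial_zF_\eps(\xi)}{\overline{\partial_zF_\eps(\xi)}}\,\frac{J_{F_\eps}(\xi)}{F_\eps(\xi)-F_\eps(z)}\dA(\xi).
\]
Then compare this kernel with $1/(\xi-z)$. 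Because $F_\eps=\mathrm{id}+O(\eps)$ in the Lipschitz/homogeneous sense, we have $J_{F_\eps}=1+O(\eps)$ and $\partial_zF_\eps/\overline{\partial_zF_\eps}=1+O(\eps)$ pointwise. The difference quotient should satisfy
\[
\Bigl|\frac{1}{F_\eps(\xi)-F_\eps(z)}-\frac1{\xi-z}\Bigr|\lesssim\frac{|\eps|}{|\xi-z|}.
\]
This follows from the bi-Lipschitz estimate $|F_\eps(\xi)-F_\eps(z)|\ge c|\xi-z|$ together with $|(F_\eps-\mathrm{id})(\xi)-(F_\eps-\mathrm{id})(z)|\lesssim|\eps||\xi-z|$. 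The latter holds because $F_\eps-\mathrm{id}$ has gradient $O(\eps)$, as seen from the explicit formulas $F_z=z^{n-1}F^{1-n}$ and $F_{\bar z}=\eps a\bar z^{n-1}F^{1-n}$. The error term is then bounded by $|\eps|\sup_z\iint|\nu_0(\xi)|\,|\xi-z|^{-1}\dA(\xi)$. This is $O(\eps)$ by the absolute potential estimate \eqref{eqn-cauchy-absolute-potential}, since $\nu_0\in L^q\cap L^p$. The same applies at $z=0$, because $F_\eps(0)=0$. Hence $P(\widetilde\lambda_\eps)(F_\eps(z))=P(\nu_0)(z)+O(\eps)$ uniformly, and multiplying by $\eps$ gives the first claim.

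\textbf{Step 3: the asymptotic form, and the main obstacle.} This follows from Lemma~\ref{thm-astal-modify}: $C(\nu_0)\in C_0(\C)$, so
\[
P(\nu_0)(z)=-C(\nu_0)(0)+o(1)=\frac1\pi\iint\frac{\nu_0(\zeta)}{\zeta}\dA(\zeta)+o(1).
\]
I expect the main obstacle to be Step 2, specifically the uniform-in-$z$ control of the kernel difference near the diagonal and near $0$. There the homogeneous map $F_\eps$ is only Lipschitz, not $C^1$, so the gradient bound must be used via integration along segments that avoid, or are split at, the origin, exactly as was done for the local Lipschitz property of $F$.
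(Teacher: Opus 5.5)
Your proposal is correct and follows essentially the same route as the paper. Both arguments use the factorization $\phi^{\eps\nu}=\Psi_\eps\circ A_\eps$, the Neumann-series bound $\|h_\eps-\lambda_\eps\|_{L^p\cap L^q}=O(\eps^2)$ turned into a uniform bound via Lemma~\ref{thm-astal-modify}, and the change of variables $\zeta=A_\eps(\eta)$ with the bi-Lipschitz kernel comparison of Lemma~\ref{lemma-A-epsilon-difference} plus the absolute-potential estimate; your splitting $\lambda_\eps=\eps\widetilde\lambda_\eps+O(\eps^3)$ is only a repackaging of the paper's remainder $e_\eps$ with $|e_\eps|\le C\eps^2|\nu_0|$.
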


\begin{proof}
We again use the decomposition strategy employed earlier.  First
define
\begin{align}\label{eqn-A}
A_\eps(z)=\bigl(z^n+\eps a\bar z^{\,n}\bigr)^{1/n}.
\end{align}
For sufficiently small $|\eps|$, $A_\eps$ is a homeomorphism of $\C$
onto itself.  Let $\mu_0(z)=a\bar z^{\,n-1}/z^{n-1}$.  A direct
computation shows that the Beltrami coefficient of $A_\eps$ is exactly
$\eps\mu_0(z)$.

Decompose the map $\phi^{\eps\nu}$ as
\begin{align}\label{eqn-linshi-4-6}
\phi^{\eps\nu}(z)=\Psi_\eps(A_\eps(z)),
\end{align}
where $\Psi_\eps(w)=\phi^{\eps\nu}(A_\eps^{-1}(w))$ is also a
quasiconformal map satisfying the normalization at infinity:
\[
\Psi_\eps(0)=0,
\qquad
\Psi_\eps(w)=w+B_\eps+o(1), \quad \text{as }w\to\infty.
\]
By the composition formula for Beltrami differentials, the Beltrami
coefficient $\lambda_\eps$ of $\Psi_\eps$ at the point
$w=A_\eps(z)$ is given by
\begin{align*}
\lambda_\eps(A_\eps(z))
=
\frac{\eps\nu(z)-\eps\mu_0(z)}
{1-\overline{\eps\mu_0(z)}\,\eps\nu(z)}
\frac{\partial_zA_\eps}{\overline{\partial_zA_\eps}}
=
\frac{\eps\nu_0(z)}
{1-\eps^2\overline{\mu_0(z)}\nu(z)}
\frac{\partial_zA_\eps}{\overline{\partial_zA_\eps}} .
\end{align*}

Notice that
$\partial_zA_\eps(z)=z^{n-1}/A_\eps(z)^{n-1}$.  Since
$A_\eps(z)=z(1+\eps a(\bar z/z)^n)^{1/n}=z(1+O(|\eps|))$, we have
uniformly
$\frac{\partial_zA_\eps}{\overline{\partial_zA_\eps}}=1+O(|\eps|)$.
Because $\mu_0,\nu\in L^\infty(\C)$, for sufficiently small $|\eps|$ we
obtain the uniform asymptotic expansion
\begin{align}\label{eqn-linshi-7}
\lambda_\eps(A_\eps(z))
=
\eps\nu_0(z)+O(\eps^2|\nu_0(z)|)=O(|\eps\nu_0(z)|).
\end{align}
Then
\[
\|\lambda_\eps\|_\infty=O(|\eps|).
\]

To evaluate the $L^p$ norm of $\lambda_\eps$ with respect to the area
measure $dw$, we apply the change of variables $w=A_\eps(z)$.  The
Jacobian is uniformly bounded as
$J_{A_\eps}(z)=|\partial_zA_\eps|^2-|\partial_{\bar z}A_\eps|^2
=1+O(|\eps|)$.  Since $\nu_0\in L^r(\C)$ for some $r\in(0,2)$,
integrating the pointwise bound \eqref{eqn-linshi-7} yields
\begin{align}\label{eqn-linshi}
\|\lambda_\eps\|_r^r
=
\iint_{\C}
|\lambda_\eps(A_\eps(z))|^rJ_{A_\eps}(z)\dA (z)
=
O(|\eps|^r),
\end{align}
which rigorously implies $\lambda_\eps\in L^r(\C)$ and
$\|\lambda_\eps\|_r=O(|\eps|)$. Since $\lambda_\eps\in L^r(\C)\cap L^\infty(\C)$, the estimate $\|\lambda_\eps\|_s^s\le \|\lambda_\eps\|_\infty^{\,s-r}\|\lambda_\eps\|_r^r$ for $s\ge r$ yields $\|\lambda_\eps\|_s=O(|\eps|)$.
Choose $p>2$ sufficiently close to $2$ so that for small $\eps$ we have
$\|\lambda_\eps\|_\infty\|T\|_p<1$.  Then there exists
$h_\eps=(I-\lambda_\eps T)^{-1}(\lambda_\eps)\in L^p(\C)$ such that
\begin{align}\label{eqn-linshi-4}
\Psi_\eps(w)=w+P(h_\eps)(w).
\end{align}
Consequently,
\begin{align}\label{eqn-linshi-2}
\|h_\eps-\lambda_\eps\|_p
=
\|\lambda_\eps T(h_\eps)\|_p
\le
\|\lambda_\eps\|_\infty\|T\|_p\|h_\eps\|_p .
\end{align}
Since $\|\lambda_\eps\|_\infty=O(|\eps|)$, we have
\[
\|\lambda_\eps T\|_p
\le
\|\lambda_\eps\|_\infty\|T\|_p
=
O(|\eps|).
\]
Together with \eqref{eqn-linshi} this yields
\[
\|h_\eps\|_p
\le
\|(I-\lambda_\eps T)^{-1}\|_p\|\lambda_\eps\|_p
\le
\frac{1}{1-\|\lambda_\eps T\|_p}\|\lambda_\eps\|_p
=
O(|\eps|).
\]
Returning to \eqref{eqn-linshi-2} we obtain
\begin{align*}
\|\lambda_\eps T(h_\eps)\|_p
=
\|h_\eps-\lambda_\eps\|_p
=
O(\eps^2).
\end{align*}
\review{The same argument can be carried out for an exponent $\max\{1,r\}<q<2$ sufficiently close to $2$}, and the Neumann series construction gives
the same $h_\eps$.  That is, there exists $1<q<2$ such that
\begin{align*}
\|\lambda_\eps T(h_\eps)\|_q
=
\|h_\eps-\lambda_\eps\|_q
=
O(\eps^2).
\end{align*}
Hence
\begin{align*}
\begin{split}
\|P(h_\eps)-P(\lambda_\eps)\|_\infty
&\le
2\|C(h_\eps)-C(\lambda_\eps)\|_\infty\\
&\le
2M_{p,q}\bigl(
\|h_\eps-\lambda_\eps\|_p+\|h_\eps-\lambda_\eps\|_q
\bigr)
=
O(\eps^2).
\end{split}
\end{align*}
Returning to \eqref{eqn-linshi-4} we get
\begin{align}\label{eqn-linshi-45}
\Psi_\eps(w)
=
w+P(h_\eps)(w)
=
w+P(\lambda_\eps)(w)+R(\eps,w),
\end{align}
where the remainder satisfies
$\sup_{w\in\C}|R(\eps,w)|=O(\eps^2)$.\footnote{Since $C(h_\eps)$ and
$C(\lambda_\eps)$ are uniformly bounded on $\C$ via
Lemma~\ref{thm-astal-modify}, the error term $R(\eps,w)$ in
\eqref{eqn-linshi-45} is strictly bounded in $L^\infty(\C)$
independent of $w$.}

Next we change variables $\zeta=A_\eps(\eta)$ to compute
$P(\lambda_\eps)$ at $w=A_\eps(z)$.  First note that the Jacobian of
the change of variables is
\[
J_{A_\eps}(\eta)
=
|\partial_\eta A_\eps|^2-|\partial_{\bar\eta}A_\eps|^2
=
1+O(|\eps|),
\quad\text{uniformly.}
\]
We have
\begin{align*}
P(\lambda_\eps)(A_\eps(z))
&=
-\frac{1}{\pi}\iint_{\C}
\lambda_\eps(\zeta)
\left(
\frac{1}{\zeta-A_\eps(z)}-\frac{1}{\zeta}
\right)\dA (\zeta)
\\
&=
-\frac{1}{\pi}\iint_{\C}
\lambda_\eps(A_\eps(\eta))
\left(
\frac{1}{A_\eps(\eta)-A_\eps(z)}-\frac{1}{A_\eps(\eta)}
\right)
J_{A_\eps}(\eta)\dA (\eta).
\end{align*}
Recall from \eqref{eqn-linshi-7} that
$\lambda_\eps(A_\eps(\eta))J_{A_\eps}(\eta)
=\eps\nu_0(\eta)+O(\eps^2|\nu_0(\eta)|)$.  Substituting this into the
integral yields
\begin{align*}
P(\lambda_\eps)(A_\eps(z))
&=
-\frac{\eps}{\pi}\iint_{\C}
\nu_0(\eta)
\left(
\frac{1}{A_\eps(\eta)-A_\eps(z)}-\frac{1}{A_\eps(\eta)}
\right)\dA (\eta)
\\
&\quad
-\frac{1}{\pi}\iint_{\C}
O(\eps^2|\nu_0(\eta)|)
\left(
\frac{1}{A_\eps(\eta)-A_\eps(z)}-\frac{1}{A_\eps(\eta)}
\right)\dA (\eta).
\end{align*}
By Lemma~\ref{lemma-A-epsilon-difference}, which will be proved below,
for sufficiently small $|\eps|$ there exists a constant $c>0$ such that
$|A_\eps(\eta)-A_\eps(z)|\ge c|\eta-z|$ uniformly, and
$A_\eps(\eta)-A_\eps(z)=(\eta-z)+O(|\eps||\eta-z|)$.  Therefore,
\begin{align*}
\left|
\frac{1}{A_\eps(\eta)-A_\eps(z)}-\frac{1}{\eta-z}
\right|
\le
\frac{C|\eps|}{|\eta-z|}.
\end{align*}
Similarly,
\review{$\left|\frac{1}{A_\eps(\eta)}-\frac{1}{\eta}\right|\le\frac{C|\eps|}{|\eta|}$}.  To estimate both the kernel-replacement
term and the coefficient-remainder term, observe that the full
perturbed kernel satisfies
\begin{align*}
\left|
\frac{1}{A_\eps(\eta)-A_\eps(z)}
-\frac{1}{A_\eps(\eta)}
\right|
\le
C\left(\frac1{|\eta-z|}+\frac1{|\eta|}\right).
\end{align*}
Because $\nu_0\in L^r\cap L^\infty$, we may choose
$\max\{1,r\}<q<2<p<\infty$
such that $\nu_0\in L^q\cap L^p$.
Writing
\[
K_\eps(\eta,z)=\frac1{A_\eps(\eta)-A_\eps(z)}-\frac1{A_\eps(\eta)},
\qquad
K_0(\eta,z)=\frac1{\eta-z}-\frac1\eta,
\]
the bounds above read
\[
|K_\eps|\le C\bigl(|\eta-z|^{-1}+|\eta|^{-1}\bigr),
\qquad
|K_\eps-K_0|\le C|\eps|\bigl(|\eta-z|^{-1}+|\eta|^{-1}\bigr),
\]
while
$\lambda_\eps(A_\eps(\eta))J_{A_\eps}(\eta)
=\eps\nu_0(\eta)+e_\eps(\eta)$ with
$|e_\eps(\eta)|\le C\eps^2|\nu_0(\eta)|$.  The absolute-potential
estimate \eqref{eqn-cauchy-absolute-potential}, applied to
$h=|\nu_0|$ once centered at $z$ and once at $0$, therefore gives
\[
\begin{split}
\bigl|P(\lambda_\eps)(A_\eps(z))-\eps P(\nu_0)(z)\bigr|
&\le
\frac1\pi\iint_{\C}
\Bigl(\review{|\eps|}\,|\nu_0(\eta)|\,|K_\eps-K_0|
+|e_\eps(\eta)|\,|K_\eps|\Bigr)\dA (\eta)\\
&=
O(\eps^2),
\end{split}
\]
uniformly for all $z\in\C$, each term being bounded by a constant
multiple of $\eps^2\bigl(\|\nu_0\|_p+\|\nu_0\|_q\bigr)$. Thus
\begin{align}\label{eqn-linshi-4-5}
P(\lambda_\eps)(A_\eps(z))
=
\eps P(\nu_0)(z)+O(\eps^2),
\end{align}
and the $O(\eps^2)$ term is uniform in $z\in\C$.

Combining \eqref{eqn-linshi-4-6}, \eqref{eqn-linshi-45} and
\eqref{eqn-linshi-4-5} we have
\begin{align}
\phi^{\eps\nu}(z)
&=
\Psi_\eps(A_\eps(z))
=
A_\eps(z)+P(\lambda_\eps)(A_\eps(z))+R(\eps,A_\eps(z))
\notag\\
&=
\bigl(z^n+\eps a\bar z^{\,n}\bigr)^{1/n}
+\eps P(\nu_0)(z)+O(\eps^2).
\label{eqn-linshi-4-7}
\end{align}
Finally, the same $L^q\cap L^p$ estimate gives $C(\nu_0)(z)\to0$ at
infinity, and hence
\[
P(\nu_0)(z)
=
\frac{1}{\pi}\iint_{\C}\frac{\nu_0(\zeta)}{\zeta}\dA (\zeta)+o(1).
\]
Substituting this into the expression for $\phi^{\eps\nu}(z)$ in
\eqref{eqn-linshi-4-7} gives
\[
\phi^{\eps\nu}(z)
=
\bigl(z^n+\eps a\bar z^{\,n}\bigr)^{1/n}
+\frac{\eps}{\pi}
\iint_{\C}\frac{\nu_0(\zeta)}{\zeta}\dA (\zeta)
+\eps\,o(1)+O(\eps^2).
\]
This completes the proof.
\end{proof}

\begin{remark}\label{remark-half-plane-expansion}
Suppose that the Beltrami differential $\nu$ is originally defined on
$\HH$ and is extended to $\C$ by the symmetric reflection
\[
\nu(z)=\overline{\nu(\bar z)},
\qquad z\in\C\setminus\overline{\HH}.
\]
Since $a\in(-1,1)$ is real, the same symmetry is satisfied by
\[
\nu_0(z)
=
\nu(z)-a\,\frac{\bar z^{\,n-1}}{z^{n-1}},
\]
namely $\nu_0(z)=\overline{\nu_0(\bar z)}$.
\begin{reviewchange}
The corresponding quasiconformal map $\phi^\nu$ preserves $\HH$.
Indeed, $F(\bar z)=\overline{F(z)}$, and
$z\mapsto\overline{\phi^\nu(\bar z)}$ has the same coefficient and
normalization as $\phi^\nu$. Uniqueness in
Theorem~\ref{thm-yibande-n-new} gives reflection symmetry and $B\in\R$.
On the positive imaginary axis, $F(iy)$ is a positive imaginary
multiple of $y$, and $\phi^\nu(iy)=F(iy)+B+o(1)$ as $y\to\infty$.
Thus the upper half-plane is mapped to itself.
\end{reviewchange}
\end{remark}

\begin{lemma}\label{lemma-A-epsilon-difference}
Let $A_\eps(z)$ be defined by \eqref{eqn-A}, where $n\ge1$ and
$a\in\R$.  For $z\neq0$, we take the branch
\[
A_\eps(z)
=
z\left(
1+\eps a\Bigl(\frac{\bar z}{z}\Bigr)^{n}
\right)^{1/n},
\]
and set $A_\eps(0)=0$.  Then there exist $\eps_0>0$ and $C>0$,
depending only on $a$ and $n$, such that for every
$|\eps|<\eps_0$ and all $z,\eta\in\C$,
\begin{align*}
\left|
A_\eps(\eta)-A_\eps(z)-(\eta-z)
\right|
\le
C|\eps|\,|\eta-z|.
\end{align*}
Equivalently,
\begin{align}\label{eqn-A-epsilon-difference}
A_\eps(\eta)-A_\eps(z)
=
(\eta-z)+O\bigl(|\eps|\,|\eta-z|\bigr),
\end{align}
where the $O$-term is uniform with respect to $z,\eta\in\C$.

In particular, after decreasing $\eps_0$ if necessary,
\[
(1-C|\eps|)|\eta-z|
\le
|A_\eps(\eta)-A_\eps(z)|
\le
(1+C|\eps|)|\eta-z|,
\]
and hence $A_\eps$ is uniformly bi-Lipschitz for sufficiently small
$|\eps|$.
\end{lemma}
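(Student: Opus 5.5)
The approach is to factor $A_\eps(z)=z\,\Phi_\eps(\bar z/z)$, where $\Phi_\eps(u)=(1+\eps a u^n)^{1/n}$, and to reduce the difference estimate to a bound on the Wirtinger derivatives of the perturbation $D_\eps(z):=A_\eps(z)-z$. Three steps give \eqref{eqn-A-epsilon-difference}:
\begin{itemize}
\item show that $D_\eps$ is continuous on $\C$;
\item show that $D_\eps$ is smooth on $\C\setminus\{0\}$;
\item show that $|\partial_z D_\eps|+|\partial_{\bar z} D_\eps|\le C|\eps|$ on $\C\setminus\{0\}$.
\end{itemize}
Given these, integrate along the segment from $z$ to $\eta$:
\[
|D_\eps(\eta)-D_\eps(z)|\le \sup\bigl(|\partial_z D_\eps|+|\partial_{\bar z}D_\eps|\bigr)\,|\eta-z|.
\]
If the segment passes through $0$, split it at $0$, using continuity of $D_\eps$ at $0$ ($D_\eps(0)=0$ and $|D_\eps(z)|\le C|\eps||z|$). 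This gives the claim with the same constant.

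For the derivative bound, I use the formulas already recorded for $F$, now with $a$ replaced by $\eps a$:
\[
\partial_z A_\eps=\Bigl(\frac{z}{A_\eps}\Bigr)^{n-1},\qquad
\partial_{\bar z}A_\eps=\eps a\,\frac{\bar z^{\,n-1}}{A_\eps^{\,n-1}}.
\]
Here the powers are read off the chosen branch: $z/A_\eps=(1+\eps a(\bar z/z)^n)^{-1/n}$, with the principal root on the right half-plane. With $|\eps a|\le 1/2$, the quantity $w=\eps a(\bar z/z)^n$ lies in $\overline D(0,|\eps a|)$.

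The function $w\mapsto(1+w)^{-(n-1)/n}$ is holomorphic on $D(0,1)$ and equals $1$ at $0$, so it is Lipschitz on $\overline D(0,1/2)$. This gives $|\partial_zA_\eps-1|\le C|\eps a|$. Similarly, $|\bar z/A_\eps|=|z|/|A_\eps|\le(1-|\eps a|)^{-1/n}\le 2$, so $|\partial_{\bar z}A_\eps|\le C|\eps a|$. Both constants depend only on $n$ (through $|a|$) once $\eps_0|a|\le1/2$. The case $n=1$ is just the affine map $z+\eps a\bar z$, which is trivial.

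The bi-Lipschitz consequence follows from the triangle inequality once $C\eps_0<1$.

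The only step needing care is the branch bookkeeping: the identities for $\partial_z A_\eps$ and $\partial_{\bar z} A_\eps$ must be consistent with the principal-root definition, so that no jump occurs across rays where $(\bar z/z)^n$ winds. This holds because $1+w$ stays in the right half-plane, where the principal logarithm is smooth. Consequently $A_\eps$ is $C^\infty$ on $\C\setminus\{0\}$, and the derivative formulas follow by differentiating $z\exp\bigl(\tfrac1n\log(1+\eps a\bar z^n/z^n)\bigr)$ directly.
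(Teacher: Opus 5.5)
Your proposal is correct and follows the paper's proof essentially step for step. Both compute $\partial_z A_\eps=z^{n-1}A_\eps^{1-n}$ and $\partial_{\bar z}A_\eps=\eps a\bar z^{\,n-1}A_\eps^{1-n}$, bound the Wirtinger derivatives of $A_\eps-\mathrm{id}$ by $C|\eps|$ uniformly on $\C\setminus\{0\}$, integrate along the segment from $z$ to $\eta$ (splitting at $0$ when necessary), and get the bi-Lipschitz bounds from the triangle inequality. Your explicit Lipschitz estimate for $w\mapsto(1+w)^{-(n-1)/n}$ on $\overline{D}(0,1/2)$ and your branch/smoothness remark just supply details the paper leaves implicit.
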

\begin{proof}
For $z\neq0$, differentiating
\[
A_\eps(z)^n=z^n+\eps a\bar z^{\,n}
\]
gives
\[
\partial_zA_\eps(z)
=
z^{n-1}A_\eps(z)^{1-n},
\qquad
\partial_{\bar z}A_\eps(z)
=
\eps a\bar z^{\,n-1}A_\eps(z)^{1-n}.
\]
Since
\[
A_\eps(z)
=
z\left(
1+\eps a\Bigl(\frac{\bar z}{z}\Bigr)^n
\right)^{1/n},
\qquad
\left|\Bigl(\frac{\bar z}{z}\Bigr)^n\right|=1,
\]
the choice of branch in the definition of $A_\eps$ and sufficiently
small $|\eps|$ imply, uniformly in $z\neq0$,
\[
\partial_zA_\eps(z)=1+O(|\eps|),
\qquad
\partial_{\bar z}A_\eps(z)=O(|\eps|).
\]
Hence, setting $F_\eps:=A_\eps-\mathrm{id}$,
\[
\|DF_\eps(z)\|_{\mathrm{op}}
\le
|\partial_zF_\eps(z)|+|\partial_{\bar z}F_\eps(z)|
\le C|\eps|,
\qquad z\neq0,
\]
where $\|\cdot\|_{\mathrm{op}}$ denotes the operator norm of the real
differential.  Integrating along the line segment joining $z$ and
$\eta$ gives
\[
|F_\eps(\eta)-F_\eps(z)|
\le C|\eps|\,|\eta-z|
\]
whenever the segment does not pass through $0$.  If it does pass
through $0$, we split the segment at $0$ and use $F_\eps(0)=0$ and
$|z|+|\eta|=|\eta-z|$ to obtain the same estimate.  Therefore
\[
\left|
A_\eps(\eta)-A_\eps(z)-(\eta-z)
\right|
\le
C|\eps|\,|\eta-z|.
\]
This proves \eqref{eqn-A-epsilon-difference}.  The two-sided estimate
follows immediately from the triangle inequality.
\end{proof}

%
%

\subsection{Proof of the variational formulas in
Theorem~\ref{thm-main-hh}}
\label{sec-normalization-transfer}
We now return to the differential
\begin{align}\label{eqn-general-n-transfer}
\nu(z)
=
a\Bigl(\frac{\bar z}{z}\Bigr)^{n-1}+\nu_0(z),
\qquad
\nu_0\in L^r(\C),\quad 0<r<2,
\end{align}
where $\nu\in L^\infty(\C)$ is the symmetric extension from $\HH$ to
$\C$.  Let $\phi^{\eps\nu}$ be the asymptotically normalized map
constructed in Corollary~\ref{coro-hyl-67-modified-new}, and let
$\psi_{\mathrm{std}}^{\eps\nu}$ be the standard solution fixing
$0,1$ and $\infty$.

\begin{lemma}\label{lemma-normalization-transfer}
Set
\begin{align*}
\rho_\eps:=\phi^{\eps\nu}(1).
\end{align*}
For \(\eps \in \mathbb{R}\) with sufficiently small $|\eps|$, $\rho_\eps>0$ and
\begin{align}\label{eqn-normalization-transfer}
\psi_{\mathrm{std}}^{\eps\nu}(z)
=
\rho_\eps^{-1}\phi^{\eps\nu}(z).
\end{align}
Moreover, $\rho_0=1$ and 
\begin{align}
\dot\rho_0
:=
\left.\frac{\dd\rho_\eps}{\dd\eps}\right|_{\eps=0}
=
-\frac{1}{\pi}\PV
\iint_{\C}
\frac{\nu(z)}{z(z-1)}\dA (z)
=
-\frac{2}{\pi}\PV\operatorname{Re}
\iint_{\HH}
\frac{\nu(z)}{z(z-1)}\dA (z).
\label{eqn-rho-variation-half-plane}
\end{align}
Here the principal value is defined by circular truncation
at infinity.
\end{lemma}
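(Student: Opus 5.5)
The plan has two parts: first the transfer identity \eqref{eqn-normalization-transfer}, then the derivative of $\rho_\eps$ at $\eps=0$.

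\emph{Transfer identity.} By Remark~\ref{remark-half-plane-expansion}, $\phi^{\eps\nu}$ commutes with reflection in $\R$ and preserves $\HH$, so it maps $\R$ onto $\R$. It fixes $0$ and $\infty$ and is an orientation-preserving homeomorphism of $\R$ onto itself. Hence $\rho_\eps=\phi^{\eps\nu}(1)$ is real with $\rho_\eps>0$; in fact this holds for every admissible $\eps$, not only small ones. Both $\rho_\eps^{-1}\phi^{\eps\nu}$ and $\psi_{\mathrm{std}}^{\eps\nu}$ are quasiconformal homeomorphisms of $\Ch$ with Beltrami coefficient $\eps\nu$ (a positive dilation does not change the coefficient), and both fix $0,1,\infty$. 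Uniqueness in the measurable Riemann mapping theorem therefore gives \eqref{eqn-normalization-transfer}. At $\eps=0$, $\phi^0=\mathrm{id}$ by uniqueness in Theorem~\ref{thm-yibande-n-new}, so $\rho_0=1$.

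\emph{Derivative of $\rho_\eps$.} I would evaluate Corollary~\ref{coro-hyl-67-modified-new} at $z=1$. This gives
\[
\rho_\eps=(1+\eps a)^{1/n}+\eps P(\nu_0)(1)+O(\eps^2),
\]
with the $O(\eps^2)$ term uniform. Hence $\rho_\eps$ is differentiable at $0$ and
\[
\dot\rho_0=\frac an+P(\nu_0)(1)
=\frac an-\frac1\pi\iint_{\C}\frac{\nu_0(\zeta)}{\zeta(\zeta-1)}\dA(\zeta),
\]
where I used $\frac1{\zeta-1}-\frac1\zeta=\frac1{\zeta(\zeta-1)}$. This integral converges absolutely, because $\nu_0\in L^q\cap L^p$ and Hölder's inequality applies exactly as in Lemma~\ref{thm-astal-modify}: near $0$ and $1$ the kernel is locally $L^{p'}$, and at infinity it is $O(|\zeta|^{-2})\in L^{q'}$.

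To reach the stated form, it remains to show
\[
-\frac1\pi\PV\iint_{\C}\frac{\mu_0(\zeta)}{\zeta(\zeta-1)}\dA(\zeta)=\frac an,
\qquad \mu_0(\zeta)=a\Bigl(\frac{\bar\zeta}{\zeta}\Bigr)^{n-1}.
\]
This is the main computation, and I expect it to be the main obstacle: the integral is not absolutely convergent at infinity (for $n=1$ it decays only like $|\zeta|^{-2}$), so the circular truncation has to be handled explicitly. I would write $\frac1{\zeta(\zeta-1)}=\frac1{\zeta-1}-\frac1\zeta$ and compute in polar coordinates, expanding $\frac1{\zeta-1}$ as a geometric series separately for $|\zeta|<1$ and $|\zeta|>1$. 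The angular factor $e^{-2i(n-1)\theta}$ then picks out a single power:
\begin{itemize}
\item on $|\zeta|<1$, the expansion $-\sum_k \zeta^k$ contributes only through the term matching $\zeta^{k}$ with $e^{2i(n-1)\theta}$, which is $k=2n-2$, giving a radial integral $\int_0^1 r^{2n-2}\,r\,dr$;
\item on $1<|\zeta|<R$, the expansion $\sum_k \zeta^{-k-1}$ has no term matching the angular factor, so it contributes zero;
\item the term $-\frac1\zeta$ contributes zero after angular averaging.
\end{itemize}
This yields $-\pi a/n$, matching the expected value, which is consistent with the linearization $\frac{d}{d\eps}(1+\eps a)^{1/n}=a/n$ of the model map at $1$. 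For $n=1$ the $k=0$ term gives $-\pi a$. I would double-check the $|\zeta|=1$ boundary and the orientation signs; the explicit example, in the form $P(\mu_0)(1)=a/n$ from the model map, provides a consistency check.

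Finally, adding the $\nu_0$ and $\mu_0$ parts under the common circular truncation gives the first equality in \eqref{eqn-rho-variation-half-plane}. The second equality follows by splitting $\C$ into $\HH$ and $-\HH$ within each disk $|\zeta|<R$ and using the reflection symmetry: the integrand at $\bar\zeta$ is the complex conjugate of the integrand at $\zeta$, so the integral over each disk equals twice the real part of the integral over its upper half.
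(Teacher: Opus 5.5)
Your proposal is correct and follows the paper's proof essentially step by step. The transfer identity comes from uniqueness of the normalized solution; the paper phrases this as $\phi^{\eps\nu}\circ(\psi_{\mathrm{std}}^{\eps\nu})^{-1}$ being a conformal automorphism of $\HH$ fixing $0$ and $\infty$, hence a positive dilation. Then $\dot\rho_0$ comes from evaluating Corollary~\ref{coro-hyl-67-modified-new} at $z=1$, and the model term is computed in polar coordinates with geometric series on $|\zeta|<1$ and $|\zeta|>1$, giving $-\pi/n$. The only difference is that you split $\frac{1}{\zeta(\zeta-1)}=\frac{1}{\zeta-1}-\frac{1}{\zeta}$, whereas the paper expands $\frac{1}{re^{i\theta}-1}$ directly, which is immaterial.
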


\begin{proof}
The maps $\phi^{\eps\nu}$ and
$\psi_{\mathrm{std}}^{\eps\nu}$ have the same Beltrami
coefficient, preserve $\HH$, and fix $0$ and $\infty$.
Therefore,
$\phi^{\eps\nu}\circ
(\psi_{\mathrm{std}}^{\eps\nu})^{-1}$
is a conformal automorphism of $\HH$ fixing $0$ and $\infty$,
and hence equals $w\mapsto cw$ for some $c>0$.
Thus
\[
\phi^{\eps\nu}(z)
=c\,\psi_{\mathrm{std}}^{\eps\nu}(z).
\]
Evaluating at $z=1$ and using
$\psi_{\mathrm{std}}^{\eps\nu}(1)=1$ gives
$c=\phi^{\eps\nu}(1)=\rho_\eps>0$, proving
\eqref{eqn-normalization-transfer}.

By Corollary~\ref{coro-hyl-67-modified-new},
\[
\rho_\eps
=
(1+\eps a)^{1/n}
+\eps P(\nu_0)(1)
+O(\eps^2),
\]
and therefore
\begin{align}\label{eqn-rho-before-model}
\dot\rho_0
=
\frac{a}{n}+P(\nu_0)(1)
=
\frac{a}{n}
-\frac{1}{\pi}
\iint_{\C}
\frac{\nu_0(z)}{z(z-1)}\dA (z).
\end{align}
The integral involving $\nu_0$ is absolutely convergent.  
%
It remains to evaluate the contribution of the model term.  Put
\[
m_n(z):=\left(\frac{\bar z}{z}\right)^{n-1}.
\]
For $R>1$, writing $z=re^{i\theta}$ gives
\[
\iint_{|z|<R}
\frac{m_n(z)}{z(z-1)}\dA(z)
=
\int_0^R\int_0^{2\pi}
\frac{e^{-i(2n-1)\theta}}
{re^{i\theta}-1}
\,d\theta\,dr.
\]
For $0<r<1$, we have
\[
\frac{1}{re^{i\theta}-1}
=
-\sum_{k=0}^{\infty}r^ke^{ik\theta},
\]
and hence
\[
\int_0^{2\pi}
\frac{e^{-i(2n-1)\theta}}
{re^{i\theta}-1}\,d\theta
=
-2\pi r^{2n-1}.
\]
For $r>1$,
\[
\frac{1}{re^{i\theta}-1}
=
\sum_{k=0}^{\infty}
r^{-k-1}e^{-i(k+1)\theta},
\]
so that
\[
\int_0^{2\pi}
\frac{e^{-i(2n-1)\theta}}
{re^{i\theta}-1}\,d\theta
=
0.
\]
Therefore, for every $R>1$,
\[
\iint_{|z|<R}
\frac{m_n(z)}{z(z-1)}\dA(z)
=
-2\pi\int_0^1 r^{2n-1}\,dr
=
-\frac{\pi}{n}.
\]
Letting $R\to\infty$ and using the circular truncation defining the
principal value, we obtain
\begin{align}\label{eqn-model-normalization-integral}
\PV
\iint_{\C}
\frac{m_n(z)}{z(z-1)}\dA(z)
=
-\frac{\pi}{n}.
\end{align}
Substitution of \eqref{eqn-model-normalization-integral} into
\eqref{eqn-rho-before-model} gives the first equality in
\eqref{eqn-rho-variation-half-plane}.  The second equality in
\eqref{eqn-rho-variation-half-plane} follows from symmetry.
\end{proof}

We shall  use  the notation introduced in \S~\ref{sec-main-sdh}, recall that
under a positive dilation $K\mapsto cK$, the driving function is
multiplied by $c$ and the half-plane capacity by $c^2$.  Equation
\eqref{eqn-normalization-transfer} therefore yields
\begin{align}\label{eqn-driving-scaling-transfer}
\lambda_{t,\mathrm{asy}}^{\eps\nu}
=
\rho_\eps\,
\lambda_{t,\mathrm{std}}^{\eps\nu},\qquad
a_{t,\mathrm{asy}}^{\eps\nu}
=
\rho_\eps^2\,
a_{t,\mathrm{std}}^{\eps\nu}.
\end{align}

\begin{theorem}\label{thm-universal-n}
Under the hypotheses of Theorem~\ref{thm-main-hh}, with $\nu$
extended symmetrically to $\C$ as in \eqref{eqn-general-n-transfer},
the following identities hold for every fixed $0\le t<T_+$:
\begin{align}
\dot\lambda_{t,\mathrm{asy}}
&=
-\frac{2}{\pi}\PV\operatorname{Re}
\iint_{D_t}
\nu(z)\left(
\frac{g_t'(z)^2}{g_t(z)-\lambda_t}-\frac1z
\right)\dA (z)
+
\frac{2}{\pi}\operatorname{Re}
\iint_{K_t^\circ}\frac{\nu(z)}{z}\dA (z),
\label{eqn-uni-drive}\\
\dot a_{t,\mathrm{asy}}
&=
\frac{1}{\pi}\PV\operatorname{Re}
\iint_{D_t}
\nu(z)\bigl(g_t'(z)^2-1\bigr)\dA (z)
-
\frac{1}{\pi}\operatorname{Re}
\iint_{K_t^\circ}\nu(z)\dA (z).
\label{eqn-uni-cap}
\end{align}
Here the principal values are circular principal values at infinity: $D_t$
is replaced by $D_t\cap\{|z|<R\}$ and $R\to\infty$. 
The two trace integrals are absolutely convergent.
\end{theorem}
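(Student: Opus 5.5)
We obtain the result by differentiating the scaling identities \eqref{eqn-driving-scaling-transfer} at $\eps=0$ and then inserting Theorem~\ref{thm-main-normal} and Lemma~\ref{lemma-normalization-transfer}.

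\textbf{Differentiation.} Lemma~\ref{lemma-normalization-transfer} shows that $\rho_\eps$ is differentiable at $0$ with $\rho_0=1$. Theorem~\ref{thm-main-normal} gives differentiability of $\lambda^{\eps\nu}_{t,\mathrm{std}}$ and $a^{\eps\nu}_{t,\mathrm{std}}$ at $\eps=0$. The product rule therefore yields
\[
\dot\lambda_{t,\mathrm{asy}}=\dot\lambda_{t,\mathrm{std}}+\lambda_t\dot\rho_0,
\qquad
\dot a_{t,\mathrm{asy}}=\dot a_{t,\mathrm{std}}+2t\dot\rho_0 .
\]
This also proves that the asymptotic variations exist.

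\textbf{Substitution.} We insert \eqref{eqn-main-norm-1} and \eqref{eqn-rho-variation-half-plane}, the latter written as $-\frac2\pi\lim_{R\to\infty}\operatorname{Re}\iint_{\HH\cap\{|z|<R\}}\nu/(z(z-1))$. We split the disk integral as $\HH\cap\{|z|<R\}=(D_t\cap\{|z|<R\})\cup K_t^\circ$, with $R$ large enough that $K_t\subset\{|z|<R\}$.

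On $K_t^\circ$, the contribution of $\lambda_t\dot\rho_0$ is $-\frac2\pi\operatorname{Re}\iint_{K_t^\circ}\nu\lambda_t/(z(z-1))$. This cancels the $\lambda_t/(z(z-1))$ part of the trace integral in \eqref{eqn-main-norm-1}, leaving $\frac2\pi\operatorname{Re}\iint_{K_t^\circ}\nu/z$.

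On $D_t$, the contribution is $-\frac2\pi\operatorname{Re}\iint_{D_t\cap\{|z|<R\}}\nu\lambda_t/(z(z-1))$. The domain integral in \eqref{eqn-main-norm-1} is absolutely convergent by Lemma~\ref{lemma-standard-kernel-integrability}, so it equals the limit of its truncations. Adding the two truncated integrals cancels the $+\lambda_t/(z(z-1))$ term inside the kernel. We are left with $-\frac2\pi\operatorname{Re}\iint_{D_t\cap\{|z|<R\}}\nu(A_t-1/z)$, and its limit exists as $R\to\infty$ because the other two pieces converge. This gives \eqref{eqn-uni-drive} with the circular principal value.

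The capacity formula \eqref{eqn-uni-cap} follows the same way. The term $2t\dot\rho_0$ contributes $-\frac{4t}{\pi}\operatorname{Re}\iint\nu/(z(z-1))$, which cancels the $4t/(z(z-1))$ terms on both $D_t$ and $K_t^\circ$ in \eqref{eqn-main-norm-2}. What remains is $\frac1\pi\PV\operatorname{Re}\iint_{D_t}\nu(g_t'^2-1)$ and $-\frac1\pi\operatorname{Re}\iint_{K_t^\circ}\nu$.

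\textbf{Main obstacle.} The only delicate point is bookkeeping with principal values. We must not split $\PV\iint(f+g)$ into separate principal values unless each one is known to exist. We avoid this by always working with truncations at a fixed large $R$, where every integral is absolutely convergent. All integrands are locally integrable: the poles at $0,1$ are simple, and $A_t$ is handled by Lemma~\ref{lemma-standard-kernel-integrability}. We pass to the limit only once, after combining everything.

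The trace integrals are absolutely convergent because $K_t^\circ$ is bounded and $\nu/z$ is locally integrable. Both sides use the same symmetry convention: each $\C$-integral equals $2\operatorname{Re}$ of the corresponding $\HH$-integral, both for the disk truncation in Lemma~\ref{lemma-normalization-transfer} and for Theorem~\ref{thm-main-normal}. The case $t=0$ is consistent, since then $\lambda_0=0$, $K_0^\circ=\emptyset$ and $g_0=\mathrm{id}$.
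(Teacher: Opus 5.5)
Your proposal is correct and follows essentially the same route as the paper. You differentiate the dilation identities $\lambda_{t,\mathrm{asy}}^{\eps\nu}=\rho_\eps\lambda_{t,\mathrm{std}}^{\eps\nu}$ and $a_{t,\mathrm{asy}}^{\eps\nu}=\rho_\eps^2 a_{t,\mathrm{std}}^{\eps\nu}$, substitute Theorem~\ref{thm-main-normal} and Lemma~\ref{lemma-normalization-transfer}, and cancel the $1/(z(z-1))$ terms separately on $D_t\cap\{|z|<R\}$ and on $K_t^\circ$. The one small difference is that you obtain existence of the circular principal values from convergence of the other two pieces, whereas the paper also checks it directly: the leading kernels $\lambda_t/z^2$ and $-4t/z^2$ integrate to zero against $a(\bar z/z)^{n-1}$ on half-annuli, and the $\nu_0$-tails converge absolutely. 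Both arguments are valid.
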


\begin{corollary}\label{coro-oscillatory-zero-trace}
Under the hypotheses of Theorem~\ref{thm-universal-n}, 
if $\nu=0$ almost everywhere on $K_t^\circ$, or if
$\Area(K_t^\circ)=0$, then
\begin{align*}
\dot\lambda_{t,\mathrm{asy}}
&=
-\frac{2}{\pi}\PV\operatorname{Re}
\iint_{D_t}
\nu(z)\left(
\frac{g_t'(z)^2}{g_t(z)-\lambda_t}-\frac1z
\right)\dA (z),
\\
\dot a_{t,\mathrm{asy}}
&=
\frac{1}{\pi}\PV\operatorname{Re}
\iint_{D_t}
\nu(z)\bigl(g_t'(z)^2-1\bigr)\dA (z).
\end{align*}
\begin{reviewchange}
In particular, these formulas apply to the global zero extension
in \eqref{eqn-global-zero-extension}, provided that the extended
coefficient still satisfies the oscillatory $L^r$ hypothesis of
Theorem~\ref{thm-universal-n}.
If a bounded coefficient already satisfies this hypothesis,
any bounded modification supported on a finite-area set preserves
this $L^r$ condition: the difference of the two coefficients is bounded and supported
on that set, and therefore belongs to $L^r$. In particular, setting
such a coefficient to zero on a finite-area trace is allowed.
A zero extension on a full trace of infinite area need not preserve
the hypothesis. For example, if the coefficient is
$a(\bar z/z)^{n-1}$ off the full trace and zero on it, then for
$a\ne0$ its perturbation from that model belongs to $L^r$ precisely
when the trace has finite area.
\end{reviewchange}
\end{corollary}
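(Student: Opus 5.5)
The plan is to read the corollary off Theorem~\ref{thm-universal-n} directly, with no new analysis. Under the hypotheses of that theorem, \eqref{eqn-uni-drive} and \eqref{eqn-uni-cap} express $\dot\lambda_{t,\mathrm{asy}}$ and $\dot a_{t,\mathrm{asy}}$ as a circular principal value over $D_t$ plus an absolutely convergent trace integral over $K_t^\circ$. The trace integrands are $\nu(z)/z$ and $\nu(z)$. If $\nu=0$ almost everywhere on $K_t^\circ$, both integrands vanish almost everywhere there. If $\Area(K_t^\circ)=0$, both integrals are over a null set. In either case the trace terms drop out and the displayed formulas remain. This mirrors the proof of Corollary~\ref{coro-standard-zero-extension} in the standard normalization.

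For the zero-extension statement, I would first check that the modified coefficient $\tilde\nu$ (set to zero on a set $E$, symmetrically reflected) is still admissible for Theorem~\ref{thm-universal-n}:
\begin{itemize}
\item $\|\tilde\nu\|_\infty\le\|\nu\|_\infty<1$, since setting values to zero does not increase the modulus;
\item reflection symmetry is preserved, because the modification is carried out on $E\cap\HH$ and then reflected;
\item the model decomposition persists with the same $a$ and $n$, since $\tilde\nu_0=\nu_0+(\tilde\nu-\nu)$.
\end{itemize}
If $E$ has finite area and $d:=\tilde\nu-\nu$ is bounded and supported in $E\cup\overline E$, then $\int|d|^r\le\|d\|_\infty^r\cdot 2\Area(E)<\infty$. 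Since $L^r$ is a vector space (quasi-normed for $r<1$, but still closed under sums), we get $\tilde\nu_0\in L^r$. Hence Theorem~\ref{thm-universal-n} applies to $\tilde\nu$. Because $\tilde\nu=0$ on $K_t^\circ$, the first part of the corollary then gives the displayed formulas.

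For the infinite-area example, let the coefficient be $a m_n$ off the full trace $K$ and $0$ on it, where $m_n(z)=(\bar z/z)^{n-1}$. Its perturbation from the model is $-a\,m_n\mathbf 1_{(K\cap\HH)\cup\overline{(K\cap\HH)}}$. Since $|m_n|=1$, the $r$-th power of its modulus equals $|a|^r$ on that set. For $a\neq0$ this is integrable exactly when $\Area(K\cap\HH)<\infty$, which shows the hypothesis can fail.

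There is no real obstacle here. The only subtle point is bookkeeping: the zero extension must be imposed once on the full trace $K$ and not only on $K_t$. That way the coefficient is a single fixed $\nu$ for all $t$, and the $D_t$-integrals, which include $K\setminus K_t$, are understood with this modified $\nu$.
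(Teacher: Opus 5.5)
Your proposal is correct and matches the paper's proof: the trace integrals in \eqref{eqn-uni-drive}--\eqref{eqn-uni-cap} vanish under either hypothesis. Your $L^r$ bookkeeping for a bounded modification on a finite-area set, and your infinite-area computation against the model $a(\bar z/z)^{n-1}$, are the same justifications the paper gives inside the statement; the extra checks on the sup-norm, reflection symmetry and the quasi-norm when $r<1$ are harmless additions.
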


\begin{proof}[Proof of Theorem~\ref{thm-universal-n}]
The scaling identities
\eqref{eqn-driving-scaling-transfer}
follow directly from the dilation covariance of the hydrodynamically
normalized mapping-out maps and their boundary values at the tip.

At $\eps=0$,
\[
\rho_0=1,\qquad
\lambda_{t,\mathrm{std}}^{0}=\lambda_t,\qquad
a_{t,\mathrm{std}}^{0}=t .
\]
Differentiation gives
\begin{align}\label{eqn-driving-derivative-transfer}
\dot\lambda_{t,\mathrm{asy}}
=
\dot\lambda_{t,\mathrm{std}}+\lambda_t\dot\rho_0 ,\qquad
\dot a_{t,\mathrm{asy}}
=
\dot a_{t,\mathrm{std}}+2t\dot\rho_0 .
\end{align}
Write $R(z)=1/[z(z-1)]$.  Theorem~\ref{thm-main-normal} gives
\begin{align*}
\dot\lambda_{t,\mathrm{std}}
=
-\frac{2}{\pi}\operatorname{Re}
\iint_{D_t}
\nu(z)\left(
A_t(z)-\frac1z-\lambda_tR(z)
\right)\dA (z)
+
\frac{2}{\pi}\operatorname{Re}
\iint_{K_t^\circ}
\nu(z)\left(\frac1z+\lambda_tR(z)\right)\dA (z).
\end{align*}
On the other hand, Lemma~\ref{lemma-normalization-transfer} gives
\begin{align*}
\lambda_t\dot\rho_0
=
-\frac{2\lambda_t}{\pi}\PV\operatorname{Re}
\left(
\iint_{D_t}\nu(z)R(z)\dA (z)
+
\iint_{K_t^\circ}\nu(z)R(z)\dA (z)
\right).
\end{align*}
The $R$-terms cancel separately on $D_t$ and $K_t^\circ$.
Substituting in \eqref{eqn-driving-derivative-transfer} gives
\eqref{eqn-uni-drive}.

Similarly, Theorem~\ref{thm-main-normal} gives
\begin{align*}
\dot a_{t,\mathrm{std}}
=
\frac1\pi\operatorname{Re}
\iint_{D_t}
\nu(z)\bigl(g_t'(z)^2-1+4tR(z)\bigr)\dA (z)
+
\frac1\pi\operatorname{Re}
\iint_{K_t^\circ}
\nu(z)\bigl(-1+4tR(z)\bigr)\dA (z),
\end{align*}
whereas
\begin{align*}
2t\dot\rho_0
=
-\frac{4t}{\pi}\PV\operatorname{Re}
\left(
\iint_{D_t}\nu(z)R(z)\dA (z)
+
\iint_{K_t^\circ}\nu(z)R(z)\dA (z)
\right).
\end{align*}
Again the $R$-terms cancel on both pieces, yielding
\eqref{eqn-uni-cap}.
\begin{reviewchange}
These cancellations may be performed first on
$\HH\cap\{|z|<R\}$, with $R$ large enough to contain $K_t$.
To justify the limiting integrals directly, the two asymptotic
domain kernels have leading terms $\lambda_t/z^2$ and $-4t/z^2$,
respectively, and errors $O(|z|^{-3})$.
The product of either leading term with $a(\bar z/z)^{n-1}$ has
zero integral on each half-annulus because
$\int_0^\pi e^{-2in\theta}\,d\theta=0$.
For the perturbation, choose $\max\{1,r\}<q<2$;
$\nu_0\in L^q$ and $|z|^{-2}\in L^{q'}(\{|z|>1\})$, so the
tails involving $\nu_0$ converge absolutely. Local integrability
follows from Lemma~\ref{lemma-standard-kernel-integrability}.
Thus both circular principal values exist.
\end{reviewchange}
\end{proof}

\begin{proof}[Proof of Corollary~\ref{coro-oscillatory-zero-trace}]
Under either hypothesis, both trace integrals in
\eqref{eqn-uni-drive}--\eqref{eqn-uni-cap} vanish.
\end{proof}

\appendix

\section{Proofs of the explicit variational formulas}
\label{sec-appendix-contours}

Both Lemma~\ref{lemma-QC-deform-1} and Lemma~\ref{lemma-QC-deform-2}
rely on evaluating complex area integrals over the upper half-plane
$\HH$.  The general strategy is to rewrite the integrands as
$\partial_{\bar z}$-derivatives of suitable boundary primitives and
then apply the complex Green's theorem.

To handle the singularities on the real line, we use an indented
domain.  Throughout, let $b_t=\sqrt{4t/(\alpha(1-\alpha))}$ for $t>0$ and $c=2\alpha-1$; the case $t=0$ is immediate.  Recall that
$\phi(z)=(z+(1-\alpha))^\alpha(z-\alpha)^{1-\alpha}$.  \review{The distinguished boundary points are the branch points} at $z=\alpha-1$ and $z=\alpha$, and
the simple zero of $\phi'$ at $z=c$, where the boundary
primitives below have singular terms.  For small
$\eps>0$ and large $R>0$, we define the truncated and indented upper
half-plane
\begin{align*}
\HH_{\eps,R}
:=
\{z\in\HH:
|z|<R,\ |z-(\alpha-1)|>\eps,\ |z-(2\alpha-1)|>\eps,\ |z-\alpha|>\eps\}.
\end{align*}
Its boundary $\partial\HH_{\eps,R}$ consists of the large semicircle
$\Gamma_R$, three small semicircles
$\Gamma_{\alpha-1},\Gamma_{2\alpha-1},\Gamma_\alpha$ (oriented
clockwise), and the disjoint real segments connecting them.

The truncation $|z|<R$ is legitimate although the principal
values were defined by a circular truncation in the physical variable:
under the change of variables $Z=b_t\phi(w)$ used below, the circle
$|Z|=b_tR$ corresponds to the curve $|\phi(w)|=R$, which lies in an
annulus of thickness $O(R^{-1})$ around $|w|=R$ because
$\phi(w)=w+O\bigl(|w|^{-1}\bigr)$.  Both area integrands below are
$O\bigl(|w|^{-2}\bigr)$ near infinity, so the integral over the
symmetric difference of the two truncations is $O(R^{-2})$ and tends
to $0$.  The circular principal values can therefore be computed on
$\HH_{\eps,R}$: the area integrands are locally integrable at the
three boundary points by the local behavior of $\phi$ and $\phi'$,
so the removed half-disks contribute terms tending to zero.

\begin{proof}[Proof of Lemma~\ref{lemma-QC-deform-1}]
By performing the change of variables
$z=g_t^{-1}(b_t w)=b_t\phi(w)$
and defining the normalized integral $I$, we can rewrite
$\mathcal L(\nu)$ as
\begin{align}\label{eqn-L-change-of-variables}
\mathcal L(\nu)
=
-\frac{4a\sqrt t}{\pi\sqrt{\alpha(1-\alpha)}}\,I,
\end{align}
where
\[
I:=
\operatorname{Re}\PV\iint_{\HH}
\frac{\overline{\phi(z)}^{\,n-1}}{\phi(z)^{n-1}}
\left(
\frac{1}{\phi'(z)\bigl(z-(2\alpha-1)\bigr)}
-\frac{\phi'(z)}{\phi(z)}
\right)
\overline{\phi'(z)}\dA (z),
\]
and the prefactor comes from the change of variables
$z=b_t\phi(w)$: the area element
contributes $b_t^2|\phi'|^2$, the bracket contributes $b_t^{-1}$, the
coefficient $\nu$ contributes the factor $a$, and the definition of
$\mathcal L$ contributes $-2/\pi$, giving $-(2/\pi)ab_t$ in total.
Define the function
\[
W(z)
:=
\frac{1}{\phi(z)^{n-1}}
\left(
\frac{1}{\phi'(z)\bigl(z-(2\alpha-1)\bigr)}
-\frac{\phi'(z)}{\phi(z)}
\right),
\]
which is holomorphic on $\HH$.  The integral $I$ is exactly
\[
I=\frac1n\operatorname{Re}\PV\iint_{\HH}
\partial_{\bar z}\bigl(W(z)\overline{\phi(z)}^{\,n}\bigr)\dA (z).
\]
Applying Green's theorem over $\HH_{\eps,R}$ and taking the real part
yields
\begin{align*}
I=
\frac{1}{2n}
\lim_{R\to\infty,\,\eps\to0}
\operatorname{Im}
\int_{\partial\HH_{\eps,R}}
W(z)\overline{\phi(z)}^{\,n}\,dz .
\end{align*}
We evaluate the contour integral piece by piece.

\begin{itemize}
\item[\textbf{(1)}]
\textbf{The real segments.}
Outside the interval $(\alpha-1,\alpha)$, $\phi(z)$ is purely real.
Hence $W(z)\overline{\phi(z)}^{\,n}\in\R$, contributing zero to the
imaginary part.  Over the interval $(\alpha-1,\alpha)$,
$\phi(x)=|\phi(x)|e^{i\pi(1-\alpha)}$.  The integrand splits into two
terms.  Direct integration of the rational term and the derivative
term over the two remaining pieces of $(\alpha-1,\alpha)$ yields
\begin{align}\label{eqn-L-real-pv}
\begin{split}
\Bigl(\int_{\alpha-1+\eps}^{c-\eps}
+\int_{c+\eps}^{\alpha-\eps}\Bigr)
W(x)\overline{\phi(x)}^{n}dx
=
-e^{i2n\pi\alpha}
\Bigl[
\frac{2\alpha(1-\alpha)}{\eps}
-2+(1-2\alpha)\ln\frac{1-\alpha}{\alpha}
\Bigr]
+O(\eps^{1-\alpha}).
\end{split}
\end{align}
The remainder is $O(\eps^{1-\alpha})$ by the local behavior of
$\phi$ at the branch points, since $\alpha\in(1/2,1)$.
This is a truncated integral and not a finite principal value by
itself; the $1/\eps$ term cancels against the central indentation in
step~(4) below.

\item[\textbf{(2)}]
\textbf{The large semicircle $\Gamma_R$.}
As $R\to\infty$ we have $\phi(z)=z+O\bigl(|z|^{-1}\bigr)$ and
$\phi'(z)/\phi(z)=(z-c)/\bigl((z-\alpha+1)(z-\alpha)\bigr)$ with
$c=2\alpha-1$, whence a direct expansion gives
\[
\phi(z)\left(
\frac{1}{\phi'(z)(z-c)}-\frac{\phi'(z)}{\phi(z)}
\right)
=
\frac{c}{z}+O\bigl(|z|^{-2}\bigr),
\]
and
\[
\left(\frac{\overline{\phi(z)}}{\phi(z)}\right)^{n}
=
\left(\frac{\bar z}{z}\right)^{n}\bigl(1+O\bigl(|z|^{-2}\bigr)\bigr),
\]
so that
\[
W(z)\overline{\phi(z)}^{\,n}
=
c\,\frac{\bar z^{\,n}}{z^{n+1}}+O\bigl(|z|^{-2}\bigr).
\]
Writing $z=Re^{i\theta}$ and $dz=iRe^{i\theta}d\theta$,
\[
\int_{\Gamma_R}W(z)\overline{\phi(z)}^{\,n}\,dz
=
ic\int_0^\pi e^{-2in\theta}\,d\theta+O\bigl(R^{-1}\bigr)
=
O\bigl(R^{-1}\bigr)\longrightarrow0 ,
\]
since the oscillatory integral, differential included, vanishes for
every integer $n\ge1$.

\item[\textbf{(3)}]
\textbf{The branch point semicircles $\Gamma_{\alpha-1}$ and
$\Gamma_\alpha$.}
Near $z=\alpha-1$, the local bounds
$|\phi(z)|=O(\eps^\alpha)$ and $\phi'(z)=O(\eps^{\alpha-1})$ imply
$|W(z)\overline{\phi(z)}^{\,n}|=O(\eps^{\alpha-1})$.  The integral is
thus bounded by $O(\eps^{\alpha-1})\cdot\pi\eps=O(\eps^\alpha)\to0$.
By symmetry, the integral over $\Gamma_\alpha$ is
$O(\eps^{1-\alpha})\to0$.

\item[\textbf{(4)}]
\textbf{The central indentation $\Gamma_{2\alpha-1}$.}
Since $\phi'$ has a simple zero at $c$, the boundary primitive
$W(z)\overline{\phi(z)}^{\,n}$ has terms of orders $(z-c)^{-2}$
and $(z-c)^{-1}$, with a bounded remainder.  Integrating these terms
clockwise from $\pi$ to $0$ gives
\begin{align}\label{eqn-L-pole}
\int_{\Gamma_{2\alpha-1}}
W(z)\overline{\phi(z)}^{\,n}\,dz
=
e^{i2n\pi\alpha}
\left[
\frac{2\alpha(1-\alpha)}{\eps}
+\pi i(1-2\alpha)
\right]
+O(\eps).
\end{align}

\item[\textbf{(5)}]
\textbf{Conclusion.}
Summing \eqref{eqn-L-real-pv} and \eqref{eqn-L-pole}, the $1/\eps$
terms cancel.  Taking the limit as $\eps\to0$ and extracting the
imaginary part gives
\begin{align*}
I
&=
\frac{1}{2n}\operatorname{Im}
\left\{
e^{i2n\pi\alpha}
\left[
\pi i(1-2\alpha)
+2-(1-2\alpha)\ln\frac{1-\alpha}{\alpha}
\right]
\right\}
\\
&=
\frac{\pi}{2n}
\left[
(1-2\alpha)\cos(2n\pi\alpha)
+\frac{\sin(2n\pi\alpha)}{\pi}
\Bigl(2-(2\alpha-1)\ln\frac{\alpha}{1-\alpha}\Bigr)
\right].
\end{align*}
Multiplying $I$ by the prefactor
$-\frac{4a\sqrt t}{\pi\sqrt{\alpha(1-\alpha)}}$ from
\eqref{eqn-L-change-of-variables} gives the right-hand side of
\eqref{eqn-QC-deform-4}, completing the proof.\qedhere
\end{itemize}
\end{proof}

\begin{proof}[Proof of Lemma~\ref{lemma-QC-deform-2}]
We need to prove that for
$\nu(z)=a(\bar z/z)^{n-1}$ with $a\in(-1,1)$ and $n\ge1$,
\begin{align}
\mathcal S(\nu)
&:=
\frac1\pi\operatorname{Re}\PV\iint_{\HH}
\nu(z)\bigl(g_t'(z)^2-1\bigr)\,dA(z)
\notag\\
&=
\frac{ta}{n}
\left[
2\cos(2n\pi\alpha)
+\frac{\sin(2n\pi\alpha)}{\pi}
\Bigl(
\frac{2\alpha-1}{\alpha(1-\alpha)}
+2\ln\frac{\alpha}{1-\alpha}
\Bigr)
\right].
\label{eqn-S-target-appendix}
\end{align}

Applying the change of variables
$z=g_t^{-1}(b_t w)=b_t\phi(w)$, the area
measure transforms as
$dA(z)=\frac{4t}{\alpha(1-\alpha)}|\phi'(w)|^2\,dA(w)$ and the
derivative satisfies $g_t'(z)=1/\phi'(w)$.  Substituting these into
$\mathcal S(\nu)$ and rewriting the integrand, we obtain
\begin{align*}
\mathcal S(\nu)
&=
\frac{4ta}{\pi\alpha(1-\alpha)}
\operatorname{Re}\PV\iint_{\HH}
\frac{\overline{\phi(z)}^{\,n-1}}{\phi(z)^{n-1}}
\left(
\frac{1}{\phi'(z)^2}-1
\right)
|\phi'(z)|^2\dA (z)
\\
&=
\frac{4ta}{\pi\alpha(1-\alpha)}
\underbrace{
\operatorname{Re}\PV\iint_{\HH}
\frac{\overline{\phi(z)}^{\,n-1}}{\phi(z)^{n-1}}
\left(
\frac{\overline{\phi'(z)}}{\phi'(z)}
-\phi'(z)\overline{\phi'(z)}
\right)\dA (z)
}_{:=\,J}.
\end{align*}

Taking the logarithmic derivative of
$\phi(z)=(z+(1-\alpha))^\alpha(z-\alpha)^{1-\alpha}$ yields
\begin{align*}
\frac{\phi'(z)}{\phi(z)}
=
\frac{z-(2\alpha-1)}{(z-\alpha+1)(z-\alpha)} .
\end{align*}
Define
\[
V(z):=
\frac{1}{\phi(z)^{n-1}}
\left(
\frac{1}{\phi'(z)}-\phi'(z)
\right)
\]
(the factor $1/n$ is not part of $V$; it appears only in
Green's formula below).
Direct differentiation gives
\[
\partial_{\bar z}
\left(
V(z)\overline{\phi(z)}^{\,n}
\right)
=
n
\frac{\overline{\phi(z)}^{\,n-1}}{\phi(z)^{n-1}}
\left(
\frac{\overline{\phi'(z)}}{\phi'(z)}
-\phi'(z)\overline{\phi'(z)}
\right).
\]
Hence
$J=\frac1n\operatorname{Re}\PV\iint_{\HH}
\partial_{\bar z}\bigl(V(z)\overline{\phi(z)}^{\,n}\bigr)\dA (z)$.
Applying Green's theorem on the indented domain $\HH_{\eps,R}$ as in
Lemma~\ref{lemma-QC-deform-1}, we obtain
\begin{align*}
J=
\frac{1}{2n}
\lim_{R\to\infty,\,\eps\to0}
\operatorname{Im}
\int_{\partial\HH_{\eps,R}}
V(z)\overline{\phi(z)}^{\,n}\,dz .
\end{align*}

To explicitly evaluate the boundary integrals, we decompose
$V(z)\overline{\phi(z)}^{\,n}$ using the rational functions $Q$ and $P$:
\begin{align}\label{eqn-V-decomp}
V(z)\overline{\phi(z)}^{\,n}
=
\left(
\frac{\overline{\phi(z)}}{\phi(z)}
\right)^{n}Q(z)
-
\frac{\overline{\phi(z)}^{\,n}}{\phi(z)^{n-2}}\,P(z),
\end{align}
where
\begin{align*}
Q(z):=\frac{(z-\alpha+1)(z-\alpha)}{z-(2\alpha-1)},
\qquad
P(z):=\frac{1}{Q(z)}=\frac{z-(2\alpha-1)}{(z-\alpha+1)(z-\alpha)} .
\end{align*}
We now analyze the contour integrals over each part of
$\partial\HH_{\eps,R}$.

\begin{itemize}
\item[\textbf{(1)}]
\textbf{The large semicircle $\Gamma_R$.}
As $z\to\infty$, the expansion
$\phi(z)=z-\frac{\alpha(1-\alpha)}{2z}+O(1/z^2)$ implies
$\phi'(z)=1+\frac{\alpha(1-\alpha)}{2z^2}+O(1/z^3)$.  Consequently,
the constant terms in $1/\phi'(z)-\phi'(z)$ cancel:
\[
\frac{1}{\phi'(z)}-\phi'(z)
=
-\frac{\alpha(1-\alpha)}{z^2}+O(1/z^3).
\]
Thus, the boundary integrand has the asymptotic expansion
\[
V(z)\overline{\phi(z)}^{\,n}
=
-\alpha(1-\alpha)\,
\frac{\bar z^{\,n}}{z^{n+1}}
+O(|z|^{-2}).
\]
Integrating over $\Gamma_R$ (where $z=Re^{i\theta}$), the
$O(|z|^{-2})$ error term contributes $O(1/R)$.  The leading term
evaluates to
\[
-\alpha(1-\alpha)
\int_0^\pi e^{-2in\theta}\,i\,d\theta=0,
\]
since $n\ge1$ is an integer.  Hence
$\lim_{R\to\infty}
\int_{\Gamma_R}V(z)\overline{\phi(z)}^{\,n}\,dz=0$.

\item[\textbf{(2)}]
\textbf{The branch point semicircles $\Gamma_{\alpha-1}$ and
$\Gamma_\alpha$.}
Parametrize $\Gamma_{\alpha-1}$ by $z=\alpha-1+\eps e^{i\theta}$,
giving $|dz|=\eps\,d\theta$.  Locally,
$|\phi(z)|=O(\eps^\alpha)$, $|Q(z)|=O(\eps)$, and
$|P(z)|=O(\eps^{-1})$.  Bounding the absolute value of
\eqref{eqn-V-decomp}, the integral of the first term is bounded by
$\int O(\eps)\,\eps\,d\theta=O(\eps^2)$, and the second term by
$\int O(\eps^{2\alpha})O(\eps^{-1})\,\eps\,d\theta
=O(\eps^{2\alpha})$.  Since $\alpha\in(0,1)$, this implies that the
integral over $\Gamma_{\alpha-1}$ vanishes as $\eps\to0$.  A
symmetric local analysis near $z=\alpha$ shows that the integral is
bounded by $O(\eps^2)+O(\eps^{2-2\alpha})$, which tends to zero since
$2-2\alpha>0$.

\item[\textbf{(3)}]
\textbf{The central small semicircle $\Gamma_{2\alpha-1}$.}
Parametrize $\Gamma_{2\alpha-1}$ by
$z=2\alpha-1+\eps e^{i\theta}$ for $\theta\in[\pi,0]$.  Here
$P(2\alpha-1)=0$, making the second term $O(\eps)$ whose integral is
$O(\eps^2)\to0$.  For the first term, $\phi'(2\alpha-1)=0$, so the
linear variation vanishes and
\[
\left(
\frac{\overline{\phi(z)}}{\phi(z)}
\right)^{n}
=
e^{i2n\pi\alpha}+O(\eps^2).
\]
Combining this with
$Q(z)=-\frac{\alpha(1-\alpha)}{\eps e^{i\theta}}+O(1)$, we obtain
\[
V(z)\overline{\phi(z)}^{\,n}
=
e^{i2n\pi\alpha}\,
\frac{-\alpha(1-\alpha)}{\eps e^{i\theta}}
+O(1).
\]
Integrating clockwise from $\pi$ to $0$:
\begin{align}\label{eqn-gamma-mid-capacity}
\begin{split}
\int_{\Gamma_{2\alpha-1}}
V(z)\overline{\phi(z)}^{\,n}\,dz
&=
\int_\pi^0
e^{i2n\pi\alpha}\,
\frac{-\alpha(1-\alpha)}{\eps e^{i\theta}}
\bigl(i\eps e^{i\theta}\bigr)\,d\theta
+O(\eps)\\
&=
i\pi\alpha(1-\alpha)e^{i2n\pi\alpha}+O(\eps).
\end{split}
\end{align}

\item[\textbf{(4)}]
\textbf{The principal value on the real line.}
Outside $(\alpha-1,\alpha)$, $\phi(x)$ is real, so
$V(x)\overline{\phi(x)}^{\,n}\in\R$, contributing zero to the
imaginary part.  Inside $(\alpha-1,\alpha)$, we have
$\bigl(\overline{\phi(x)}/\phi(x)\bigr)^n=e^{i2n\pi\alpha}$ and
$\overline{\phi(x)}^{\,n}/\phi(x)^{n-2}
=e^{i2(n-1)\pi\alpha}|\phi(x)|^2$.  Thus
\begin{align}\label{eqn-pv-real}
\begin{split}
\PV\int_{\alpha-1}^{\alpha}
V(x)\overline{\phi(x)}^{\,n}\,dx
&=
e^{i2n\pi\alpha}\PV\int_{\alpha-1}^{\alpha}Q(x)\,dx\\
&\quad
-
e^{i2(n-1)\pi\alpha}
\int_{\alpha-1}^{\alpha}|\phi(x)|^2P(x)\,dx .
\end{split}
\end{align}
For
$Q(x)=(x-2\alpha+1)+(2\alpha-1)+\frac{\alpha(\alpha-1)}{x-2\alpha+1}$,
the symmetric cancellation of the principal value yields
\begin{align*}
\PV\int_{\alpha-1}^{\alpha}Q(x)\,dx
&=
\left[
\frac12(x-2\alpha+1)^2+(2\alpha-1)x
\right]_{\alpha-1}^{\alpha}
+
\alpha(\alpha-1)\ln\left|\frac{1-\alpha}{-\alpha}\right|
\\
&=
\alpha-\frac12
-\alpha(1-\alpha)\ln\frac{1-\alpha}{\alpha}.
\end{align*}
For the second integral,
$|\phi(x)|^2P(x)
=-(x-\alpha+1)^{2\alpha-1}(\alpha-x)^{1-2\alpha}(x-2\alpha+1)$.
Letting $u=x-\alpha+1$, the Beta integral gives
\begin{align*}
\int_0^1-u^{2\alpha-1}(1-u)^{1-2\alpha}(u-\alpha)\,du
=
-B(2\alpha+1,2-2\alpha)+\alpha B(2\alpha,2-2\alpha)
=
0,
\end{align*}
because
$B(2\alpha+1,2-2\alpha)=\alpha\Gamma(2\alpha)\Gamma(2-2\alpha)
=\alpha B(2\alpha,2-2\alpha)$.
Thus this integral vanishes.

\item[\textbf{(5)}]
\textbf{Conclusion.}
The total imaginary part is generated solely by
\eqref{eqn-gamma-mid-capacity} and \eqref{eqn-pv-real}.  Thus
\begin{align*}
J
&=
\frac{1}{2n}\operatorname{Im}
\left[
i\pi\alpha(1-\alpha)e^{i2n\pi\alpha}
+
e^{i2n\pi\alpha}
\left(
\alpha-\frac12
+\alpha(1-\alpha)\ln\frac{\alpha}{1-\alpha}
\right)
\right]
\\
&=
\frac{1}{2n}
\left[
\pi\alpha(1-\alpha)\cos(2n\pi\alpha)
+
\sin(2n\pi\alpha)
\left(
\alpha-\frac12
+\alpha(1-\alpha)\ln\frac{\alpha}{1-\alpha}
\right)
\right],
\end{align*}
where we used
$-\ln\frac{1-\alpha}{\alpha}=\ln\frac{\alpha}{1-\alpha}$.
Finally, multiplying by the scale factor
$\frac{4ta}{\pi\alpha(1-\alpha)}$, we obtain
\begin{align*}
\mathcal S(\nu)
=
\frac{4ta}{\pi\alpha(1-\alpha)}\,J
&=
\frac{2ta}{\pi n}
\Bigl[
\pi\cos(2n\pi\alpha)
+
\sin(2n\pi\alpha)
\Bigl(
\frac{2\alpha-1}{2\alpha(1-\alpha)}
+\ln\frac{\alpha}{1-\alpha}
\Bigr)
\Bigr]
\\
&=
\frac{ta}{n}
\Bigl[
2\cos(2n\pi\alpha)+
\frac{\sin(2n\pi\alpha)}{\pi}
\Bigl(
\frac{2\alpha-1}{\alpha(1-\alpha)}
+2\ln\frac{\alpha}{1-\alpha}
\Bigr)
\Bigr].
\end{align*}
This exactly matches the desired formula
\eqref{eqn-S-target-appendix}, and the proof is complete.\qedhere
\end{itemize}
\end{proof}
\section{Winding numbers}\label{appendix B}

Let $\gamma:[0,1]\to\mathbb{C}$ be a closed curve and let
$P\in\mathbb{C}\setminus\gamma([0,1])$. We write
$\operatorname{Ind}(\gamma,P)$ for the winding number of
$\gamma$ about $P$. We will use the following standard properties of $\operatorname{Ind}(\gamma,P)$.

\begin{lemma}[{\cite[Corollary~3.8]{Fu95}}]\label{lem-winding-homotopy}
Let $P\in\C$, and let \(\alpha,\beta\colon[0,1]\to\C\setminus\{P\}\)
be closed curves.  If $\alpha$ and $\beta$ are homotopic as closed
curves in $\C\setminus\{P\}$, then
\[
\Ind(\alpha,P)=\Ind(\beta,P).
\]
\end{lemma}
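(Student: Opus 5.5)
The statement is the homotopy invariance of the winding number: if closed curves $\alpha,\beta$ in $\C\setminus\{P\}$ are homotopic there, they have the same index about $P$. The plan is to define $\Ind$ through continuous lifts of the argument, and then show that this integer-valued quantity is locally constant along the homotopy.

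\textbf{Setup.} Translate so that $P=0$. For a closed curve $\sigma\colon[0,1]\to\C\setminus\{0\}$, uniform continuity and the covering map $\exp\colon\C\to\C\setminus\{0\}$ give a continuous lift $\tilde\sigma$ with $e^{\tilde\sigma}=\sigma$. Lifts are unique up to an additive constant in $2\pi i\mathbb Z$. We set
\[
\Ind(\sigma,0)=\frac{\tilde\sigma(1)-\tilde\sigma(0)}{2\pi i}\in\mathbb Z .
\]
This agrees with the contour-integral definition whenever $\sigma$ is rectifiable, since $\frac{1}{2\pi i}\int_\sigma dz/z$ is exactly the change in $\log$ along a lift.

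\textbf{Local constancy.} Let $H\colon[0,1]^2\to\C\setminus\{0\}$ be the homotopy, with $H(0,\cdot)=\alpha$, $H(1,\cdot)=\beta$, and $H(s,\cdot)$ closed for each $s$. By compactness, $m=\min|H|>0$ and $H$ is uniformly continuous. Hence there is $\delta>0$ such that $|s-s'|<\delta$ implies
\[
\sup_t|H(s,t)-H(s',t)|<m .
\]
For such $s,s'$ the quotient $H(s',t)/H(s,t)$ lies in the disc $D(1,1)$, which is contained in the right half-plane. There the principal logarithm $\log$ is continuous, so
\[
\widetilde{H(s',\cdot)}=\widetilde{H(s,\cdot)}+\log\frac{H(s',\cdot)}{H(s,\cdot)}
\]
is a lift of $H(s',\cdot)$. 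The correction term takes the same value at $t=0$ and $t=1$, because both curves are closed. Therefore the two indices coincide.

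\textbf{Conclusion.} Partition $[0,1]$ in $s$ into steps of length less than $\delta$ and chain these equalities from $s=0$ to $s=1$. This gives $\Ind(\alpha,P)=\Ind(\beta,P)$.

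The only delicate point is the uniform choice of $\delta$, which comes from compactness of $[0,1]^2$. Everything else is routine. Alternatively, one may simply cite \cite[Corollary~3.8]{Fu95}, as the paper does.
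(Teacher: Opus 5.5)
Your proof is correct. The paper gives no argument for this lemma; it only cites Fulton. What you have written is the standard self-contained lifting proof behind that citation. The bound $|H(s',t)/H(s,t)-1|<1$ lets the principal logarithm carry a lift from one slice to a nearby slice without changing its increment. Compactness of $[0,1]^2$ then supplies a uniform step size $\delta$.

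Over the bare citation, your version makes explicit that $\Ind$ is defined through continuous lifts, so no rectifiability is needed. This matters in the paper's application: the curves $\psi^\mu\circ\gamma_R-B$ and $F^\mu\circ\gamma_R$ are images of circles under quasiconformal maps and need not be rectifiable. For such curves the integral $\frac{1}{2\pi i}\int dz/z$ is not directly available. With your definition, the product rule of Lemma~\ref{lem-winding-product} is also immediate, since lifts of the two factors add to a lift of the product.
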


\begin{lemma}[{\cite[Lemma~3.2.6]{Ro15}}]
\label{lem-winding-product}
Let $\alpha,\beta\colon[0,1]\to\C\setminus\{0\}$ be continuous closed
curves.  Define their pointwise product
$\alpha\beta\colon[0,1]\to\C\setminus\{0\}$ by
$(\alpha\beta)(t):=\alpha(t)\beta(t)$, $t\in[0,1]$.  Then
\[
\Ind(\alpha\beta,0)
=
\Ind(\alpha,0)+\Ind(\beta,0).
\]
\end{lemma}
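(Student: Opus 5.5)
The plan is to compute all three winding numbers through continuous arguments. I will use the standard characterization: for a closed curve $\gamma\colon[0,1]\to\C\setminus\{0\}$ there is a continuous $\theta\colon[0,1]\to\R$ with
\[
\gamma(t)=|\gamma(t)|e^{i\theta(t)},\qquad t\in[0,1],
\]
and then
\[
\Ind(\gamma,0)=\frac{\theta(1)-\theta(0)}{2\pi}.
\]
This quantity is independent of the choice of $\theta$. Once this is in place, the lemma reduces to the additivity of arguments under multiplication.

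The first step is to justify the existence and essential uniqueness of such a lift $\theta$. For existence, I use uniform continuity of $\gamma/|\gamma|$ on $[0,1]$. This gives a partition $0=t_0<\dots<t_m=1$ such that on each $[t_{k-1},t_k]$ the unit vector $\gamma(t)/|\gamma(t)|$ stays in an open half-circle. On each such piece a continuous branch of $\arg$ exists. I then glue the branches successively, adjusting each by a multiple of $2\pi$ to match at the partition points. For uniqueness, any two lifts differ by a continuous function with values in $2\pi\mathbb Z$, hence by a constant, so $\theta(1)-\theta(0)$ is well defined. If the paper's definition of $\Ind$ is the integral $\frac1{2\pi i}\int_\gamma dz/z$ for piecewise smooth curves, the same identity follows by writing $\log\gamma=\log|\gamma|+i\theta$ along the curve, and extends to continuous curves by approximation.

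The second step is the product itself. Let $\theta_\alpha$ and $\theta_\beta$ be continuous lifts of $\alpha$ and $\beta$. Then
\[
\alpha(t)\beta(t)=|\alpha(t)\beta(t)|\,e^{i(\theta_\alpha(t)+\theta_\beta(t))},
\]
and $\theta_\alpha+\theta_\beta$ is continuous, so it is a lift of $\alpha\beta$. The curve $\alpha\beta$ is closed and avoids $0$, since both factors are nonzero. Evaluating the increment of this lift from $t=0$ to $t=1$ and dividing by $2\pi$ gives
\[
\Ind(\alpha\beta,0)=\Ind(\alpha,0)+\Ind(\beta,0).
\]

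The only nonformal point is the lifting step, namely existence of the continuous argument together with its uniqueness modulo $2\pi\mathbb Z$. I expect this to be the main, though routine, obstacle. I would handle it with the partition argument above, which is exactly the path-lifting property of the covering map $\R\to S^1$, $s\mapsto e^{is}$.
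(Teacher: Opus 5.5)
Your proof is correct: the sum $\theta_\alpha+\theta_\beta$ of continuous argument lifts is a continuous argument lift of $\alpha\beta$, so the increments add, and your partition argument properly establishes that lifts exist and are unique up to an additive constant in $2\pi\mathbb{Z}$. The paper does not prove this lemma; it only cites \cite[Lemma~3.2.6]{Ro15}. Your continuous-argument (path-lifting) argument is the standard proof of this fact and is consistent with the angle-function definition of the winding number in the cited sources.
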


\section*{Acknowledgments}
\addcontentsline{toc}{section}{Acknowledgments}
We thank Professor Guizhen Cui for suggesting that we consider the standard normalization in order to remove the support and decay assumptions on the Beltrami differential. We used AI tools to assist with simplifying arguments and developing
parts of the proofs.

\end{document}